\documentclass[11pt,reqno]{amsart}
\usepackage{amsthm,amssymb,amsmath}

\usepackage{xcolor}

\usepackage{fix-cm}
\usepackage{enumitem,needspace}
\usepackage{ulem}
\usepackage{esint}
\usepackage{mlmodern}

\usepackage[T1]{fontenc}

\usepackage[colorlinks,citecolor=blue,hypertexnames=false]{hyperref}

\AtBeginDocument{
  \hypersetup{
    urlcolor=red,
    citecolor=blue,
    linkcolor=blue,
  }
}

\newtheorem{theorem}{Theorem}
\newtheorem{proposition}{Proposition}
\newtheorem*{theorem*}{Theorem}
\newtheorem{lemma}{Lemma}[section]
\newtheorem{corollary}{Corollary}

\theoremstyle{definition}

\newtheorem*{definition*}{\bf Definition}

\newtheorem{example}{\bf Example}[section]

\newtheorem{remark}{\sc Remark}[section]
\newtheorem*{remark*}{\sc Remark}

\newtheorem*{example*}{\bf Example}

\newcommand{\br}[2]{\left\langle #1\right\rangle_{#2}}

\newcommand{\ipair}{\sum_{i<j}}
\newcommand{\opair}{\sum_{i\ne j}}

\numberwithin{equation}{section}

\begin{document}

\title[Uniform spectral gap for particle systems]{Uniform many-particle spectral gap inequality and heat kernel bounds for strong attractive interactions}

\author{S.E.\,Boutiah, D.\,Kinzebulatov, and K.R.\,Madou}

\begin{abstract}
We prove a spectral gap inequality for Gaussian measures modified by
singular attractive pair interactions. The spectral gap constant is explicit and
uniform in the number of particles and the regularization parameter.  As an application, we use a corresponding spectral gap inequality for a cutoff interaction weight to obtain 
two-sided bounds on the transition density of the attractive logarithmic gas. 
\end{abstract}

\address{Universit\'{e} Laval, D\'{e}partement de math\'{e}matiques et de statistique, Qu\'{e}bec, QC, Canada and Laboratoire de Math\'{e}matiques Appliqu\'{e}es, Universit\'{e} Ferhat Abbas, S\'{e}tif 1, Campus El Bez,  S\'{e}tif, Algeria}
\email{sallah-eddine.boutiah.1@ulaval.ca}

\address{Universit\'{e} Laval, D\'{e}partement de math\'{e}matiques et de statistique, Qu\'{e}bec, QC, Canada}
\email{damir.kinzebulatov@mat.ulaval.ca}

\email{kodjo.madou@mail.mcgill.ca}

\thanks{The research of D.K. is supported by  NSERC grant (RGPIN-2024-04236)}

\keywords{Spectral gap, many-particle Hardy inequality, attractive logarithmic interaction, heat kernel bounds, Keller-Segel particle system}

\subjclass[2020]{Primary 60E15; Secondary 60K35, 35K08}

\fontsize{10.5pt}{4.4mm}\selectfont

\maketitle

\setcounter{tocdepth}{1}
\tableofcontents

\section{Introduction and main results}
\label{sec:introduction}

\textbf{1.}~The subject of the present paper, which continues \cite{BK}, is a spectral gap inequality
for particles with singular attractive interactions, with constants that do not deteriorate as the number
of particles increases. Specifically, we consider the attractive logarithmic gas
\begin{equation}
\label{part1}
 dX_t^i=-\frac{\nu}{N}\sum_{j\ne i}
 \frac{X_t^i-X_t^j}{|X_t^i-X_t^j|^2}\,dt+\sqrt2\,dB_t^i,
 \qquad X_0^i=x^i\in\mathbb R^d,
\end{equation}
where $d\ge3$, $N\ge2$, and the $B^i$ are independent
$d$-dimensional Brownian motions. The positive constant $\nu$ measures
the strength of attraction between the particles.
The singularity of the interaction makes the dynamics of these particles 
different, even when $\nu$ is small, from the case of independent Brownian particles. This difference is reflected in the two-sided estimates on the transition density (heat kernel) of \eqref{part1} discussed below. 

The particle system \eqref{part1} is a basic model of aggregation under
thermal noise, with a certain balance between attraction and diffusion.
That is, sufficiently strong attraction prevents global weak existence:
for $\nu>2d$, the  argument in \cite{FJ,F} appealing to the theory of Bessel processes
shows that \eqref{part1} has no global weak solution. On the other hand, if $\nu$ is below a smaller threshold, then \eqref{part1} has a weak solution for every fixed initial configuration \cite{FJ,K} and these weak solutions determine a strongly continuous Feller semigroup \cite{KS}. See more detailed discussion of the SDE theory aspects of \eqref{part1} below.

Theorem \ref{thm:full-gap} establishes a many-particle spectral gap inequality for Gaussian weights
multiplied by the logarithmic interaction weight. Both the admissible
range of $\nu$ and the spectral gap constant are independent of $N$.
The estimates are also uniform under regularization of the collision
singularities. This spectral gap inequality
gives an explicit rate of convergence to equilibrium after adding
harmonic confinement to \eqref{part1}, see Corollary \ref{cor:confined-consequences}.

As an application, we give in Appendix \ref{app:direct-heat} a direct proof of the lower 
heat kernel bound for the particle system \eqref{part1} with explicit collision weights.
It uses a variant of Theorem \ref{thm:full-gap}, i.e.\,the spectral gap inequality for a cutoff of
the interaction weight. 
For completeness, we also give in Appendix \ref{upper_bound_app} the proof of the matching upper bound from \cite{BK}, which uses a weighted Sobolev inequality
and weighted Moser iterations. 
These two-sided heat kernel bounds quantify how the attraction splits from the diffusion. The constants in these bounds, in principle, depend on $N$.

\medskip

\textbf{2.}~We first define the weights appearing in the spectral gap
inequality. For $\varepsilon\ge0$, put
\[
 |z|_\varepsilon:=(|z|^2+\varepsilon)^{1/2},
 \qquad
 \psi_\varepsilon(x):=
 \prod_{1\le i<j\le N}|x^i-x^j|_\varepsilon^{-\nu/N},
 \qquad \psi:=\psi_0.
\]
The function $\psi_\varepsilon$ is the formal invariant density of the
regularized version of \eqref{part1}. It is not integrable over the whole
configuration space $\mathbb R^{dN}$ since it is unchanged by a common translation of
all particles. However, multiplication by a Gaussian gives a finite measure in
the range of $\nu$ of Theorem~\ref{thm:full-gap}, see Lemma~\ref{lem:psi-mass}.

We write
\[
 \Gamma(t,x):=(4\pi t)^{-dN/2}e^{-|x|^2/(4t)},\qquad
 \Gamma_{s,\xi}(x):=\Gamma(s,x-\xi),\qquad
 \Gamma_s:=\Gamma_{s,0},
\]
where $\xi=(\xi^1,\ldots,\xi^N)\in\mathbb R^{dN}$.

For a weight $\rho$, we write
$\langle h\rangle_\rho:=\int_{\mathbb R^{dN}}h(x)\rho(x)\,dx$,
see Section~\ref{notations_sect} for notation.

\begin{theorem}[Uniform many-particle spectral gap inequality]
\label{thm:full-gap}
Let $d\ge3$, $N\ge2$, and
\begin{equation}
 0<\nu<\frac{(d-2)^2}{3d}.
 \label{eq:full-nu-condition}
\end{equation}
Then, for every $s>0$, $\varepsilon\ge0$, $\xi\in\mathbb R^{dN}$, and
real-valued
$f\in L^2_{\Gamma_{s,\xi}\psi_\varepsilon}(\mathbb R^{dN})
\cap W^{1,2}_{\mathrm{loc}}(\mathbb R^{dN})$, one has
\begin{equation}
 \br{\Gamma_{s,\xi}|\nabla f|^2}{\psi_\varepsilon}
 \ge\frac{M_\psi}{s}\inf_{c\in\mathbb R}
       \br{\Gamma_{s,\xi}|f-c|^2}{\psi_\varepsilon},
 \label{eq:full-gap}
\end{equation}
where
\begin{equation}
 M_\psi:=\frac{(d-2)^2-3d\nu}{2\bigl((d-2)^2-d\nu\bigr)}>0.
 \label{eq:full-gap-constant}
\end{equation}
In particular, the constant is independent of $N$, $\varepsilon$, and
the Gaussian center $\xi$.
\end{theorem}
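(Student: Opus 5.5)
The plan is the $\Gamma_2$ (Bochner) argument for the measure $\mu:=\Gamma_{s,\xi}\psi_\varepsilon\,dx$, with the non-convex part of $-\log\psi_\varepsilon$ absorbed by a many-particle Hardy inequality. First, by translating $x\mapsto x+\xi$ and using that $\psi_\varepsilon$ depends only on the differences $x^i-x^j$, I will reduce to $\xi=0$. By scaling $x\mapsto\sqrt s\,x$ (which replaces $\varepsilon$ by $\varepsilon/s$), I will reduce to $s=1$. Every estimate below must be uniform in $\varepsilon$, so these reductions cost nothing. By Lemma~\ref{lem:psi-mass}, $\mu$ is finite in the range \eqref{eq:full-nu-condition}. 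A standard cutoff and density argument will then let me work with smooth, compactly supported $f$, and for $\varepsilon=0$ pass to the limit $\varepsilon\downarrow0$ by monotone/dominated convergence.

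Let $L=\Delta+\nabla\log(\Gamma_1\psi_\varepsilon)\cdot\nabla$ be the symmetric generator on $L^2_\mu$. The key identity, obtained by integration by parts, is
\[
\br{(Lf)^2}{\mu}=\br{|\nabla^2 f|^2}{\mu}+\br{\langle -\nabla^2\log(\Gamma_1\psi_\varepsilon)\nabla f,\nabla f\rangle}{\mu},
\]
with $-\nabla^2\log\Gamma_1=\tfrac12 I$. If I can show $\br{(Lf)^2}{\mu}\ge M\br{|\nabla f|^2}{\mu}$, the spectral gap $\ge M$ follows by the usual argument (apply it on the orthogonal complement of constants, i.e.\ $\br{|\nabla f|^2}{\mu}=-\br{fLf}{\mu}\le \|f-c\|\,\|Lf\|$). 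For the interaction term, write $z=x^i-x^j$. Then $-\nabla^2\log\psi_\varepsilon=\frac{\nu}{N}\sum_{i<j}\nabla^2\log|z|_\varepsilon$, and $\nabla_z^2\log|z|_\varepsilon=\frac{I}{|z|_\varepsilon^2}-\frac{2zz^T}{|z|_\varepsilon^4}\ge -\frac{I}{|z|^2_\varepsilon}$. Writing $\nabla_z$ acting in the pair direction, $(\partial_{x^i}-\partial_{x^j})/2$, the negative part is therefore bounded by
\[
\frac{\nu}{N}\sum_{i<j}\Big\langle\frac{|(\nabla_{x^i}-\nabla_{x^j})f|^2}{|x^i-x^j|_\varepsilon^{2}}\Big\rangle_{\mu}\quad\text{(up to the factor from the change of variables).}
\]

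To absorb this into $\br{|\nabla^2 f|^2}{\mu}$, I will prove a weighted many-particle Hardy inequality applied to each component $g=\partial_k f$:
\[
\br{|\nabla g|^2}{\mu}\ \ge\ c_{d,\nu}\,\frac1N\sum_{i<j}\Big\langle \frac{|g|^2}{|x^i-x^j|_\varepsilon^2}\Big\rangle_{\mu}-(\text{lower-order Gaussian terms}).
\]
I would obtain it by the vector-field method. Take $V=\frac1N\sum_{i<j}\frac{(x^i-x^j)}{|x^i-x^j|_\varepsilon^2}(e_i-e_j)$ and expand $0\le\br{|\nabla g+\gamma Vg|^2}{\mu}$. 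Integrating by parts against $\mu$ produces $\operatorname{div}V$, which contributes the pair terms $\sim (d-2)/|x^i-x^j|^2_\varepsilon$. It also produces $V\cdot\nabla\log\psi_\varepsilon$, which gives a $-\nu$ correction: the attraction makes the weight larger near collisions. Finally, it produces $V\cdot\nabla\log\Gamma_1$, which is bounded and harmless (indeed of good sign, since $V\cdot x\ge0$ after symmetrization). Optimizing $\gamma$ should give the Hardy constant $\frac{(d-2)^2-d\nu}{4}$-type expressions appearing in \eqref{eq:full-gap-constant}. Combining with the Hessian bound then yields $\br{(Lf)^2}{\mu}\ge M_\psi\br{|\nabla f|^2}{\mu}$ with $M_\psi=\frac12\cdot\frac{(d-2)^2-3d\nu}{(d-2)^2-d\nu}$. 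This is positive precisely under \eqref{eq:full-nu-condition}, and as $\nu\to0$ it reduces to the Gaussian constant $\tfrac12$.

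The main obstacle is uniformity in $N$ in the Hardy step. Both $|V|^2$ and $V\cdot\nabla\log\psi_\varepsilon$ contain triple-index cross terms $\sum_{i,j,k}\frac{(x^i-x^j)\cdot(x^i-x^k)}{|x^i-x^j|^2|x^i-x^k|^2}$, of which there are $O(N^3)$ against a $1/N^2$ prefactor. I would handle them with the classical three-point identity: symmetrized over each triangle $\{i,j,k\}$, these sums are nonnegative (they equal a sum of squares divided by products of distances, as in the proofs of many-particle Hardy inequalities). The regularization $|\cdot|_\varepsilon$ breaks the exact identity, so I expect to have to check that the $\varepsilon$-version still has the right sign, or that the error is controlled by $\varepsilon/|z|_\varepsilon^4\le 1/|z|_\varepsilon^2$. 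This bookkeeping is where the exact constants $3d\nu$ and $d\nu$ would be fixed.
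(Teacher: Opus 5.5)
Your reductions to $\xi=0$, $s=1$ and your pointwise bound on the interaction Hessian are fine. The Hardy step, however, does not give an $N$-independent constant, and that uniformity is the whole content of the theorem.

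\textbf{The Gaussian term is of bad sign and of order $N$.} Your field is $V=F/N$ with $F_i=\sum_{j\ne i}(x^i-x^j)/|x^i-x^j|_\varepsilon^2$, and $\nabla\log\Gamma_1=-x/2$, so
\[
 V\cdot x=\frac1N\sum_{i<j}\frac{|x^i-x^j|^2}{|x^i-x^j|_\varepsilon^2}\ge0 .
\]
You need $\gamma>0$ to get $\gamma\operatorname{div}V>0$. The expansion of $0\le\br{|\nabla g+\gamma Vg|^2}{\mu}$ then places the following term on the right-hand side of your Hardy inequality:
\[
 \gamma\br{g^2\,V\cdot\nabla\log\Gamma_1}{\mu}
 =-\frac{\gamma}{2N}\sum_{i<j}\br{g^2\frac{|x^i-x^j|^2}{|x^i-x^j|_\varepsilon^2}}{\mu}
 \ \ge\ -\frac{\gamma(N-1)}{4}\br{g^2}{\mu}.
\]
This is a loss of order $N$, and it is not an artifact of your choice of $V$. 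Testing with $g\equiv1$ shows that any inequality $\br{|\nabla g|^2}{\mu}\ge\frac cN\sum_{i<j}\br{g^2|x^i-x^j|^{-2}}{\mu}-K\br{g^2}{\mu}$ forces $K$ of order $cN$; compare the term $\frac{dN}{4}$ in Proposition~\ref{prop:first-singular}.

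\textbf{Why this kills uniformity.} You compare the interaction term with $\frac1N\sum_{i<j}|\nabla f|^2/|x^i-x^j|^2$ via $|(\nabla_i-\nabla_j)f|^2\le2|\nabla f|^2$. At best this yields $\br{(Lf)^2}{\mu}\ge(1-\frac{2\nu}{c})\br{|\nabla^2f|^2}{\mu}+(\frac12-C\nu N)\br{|\nabla f|^2}{\mu}$, which is useless once $N\gtrsim1/\nu$.

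\textbf{The three-point identity points the wrong way.} It shows the three-body part of $|F|^2$ is nonnegative, which is a lower bound. But $|V|^2$ enters your expansion with a minus sign, both directly and through $V\cdot\nabla\log\psi_\varepsilon=-\nu|V|^2$. What you need is the upper bound $|F|^2\le N\sum_{i<j}|x^i-x^j|_\varepsilon^{-2}$, which follows from $\sum_iF_i=0$ and $\sum_{i<j}|F_i-F_j|^2=N|F|^2$.

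\textbf{The missing idea is a Hardy inequality matched to the pair structure of the interaction.} The paper proceeds as follows.
\begin{itemize}
\item It bounds the Hessian term by the componentwise sum $\frac{2\nu}{N}\sum_{i\ne j}|\nabla_if|^2/|x^i-x^j|^2$. This is typically $N$ times smaller than $\frac{2\nu}{N}\sum_{i<j}|\nabla f|^2/|x^i-x^j|^2$.
\item It controls that sum in Proposition~\ref{prop:componentwise}, using the two-body Hardy inequality \eqref{eq:hardy-pair} in the relative variable $x^i-x^j$. This is applied to $(\nabla_if)_\alpha\sqrt{\Gamma_{1,\xi}\psi_\varepsilon}$ through the ground-state identity \eqref{eq:ground-state-pair}.
\item There the Gaussian contributes only $(\nabla_i-\nabla_j)\cdot(\nabla_i-\nabla_j)\log\Gamma_{1,\xi}=-d$ per pair. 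In total this is $\frac d2(N-1)\br{\Gamma_{1,\xi}|\nabla f|^2}{\psi_\varepsilon}$.
\item The second-order side is at most $2(N-1)\br{\Gamma_{1,\xi}|\nabla^2f|^2}{\psi_\varepsilon}$. After multiplying by $2\nu/N$, both this and the Gaussian contribution are $O(1)$.
\item A full-gradient inequality of your type (Proposition~\ref{prop:first-singular}, with its $O(N)$ Gaussian term) is used only for the three-body terms $\sum_{k\ne i,j}\bigl(|x^i-x^k|^{-2}+|x^j-x^k|^{-2}\bigr)$. These arise from $\log\psi_\varepsilon$ in the pair-directional Laplacian \eqref{eq:logpsi-directional}.
\item Those three-body terms carry an extra factor $1/N$, which neutralizes the $O(N)$ term. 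This is where $(d-2)^2-d\nu$ and then $M_\psi=\frac12-\frac{d\nu}{(d-2)^2-d\nu}$ come from.
\end{itemize}

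\textbf{Your closing step is also incorrect as written.} The bound $\br{|\nabla f|^2}{\mu}\le\|f-c\|\,\|Lf\|$ together with $\|Lf\|^2\ge M\br{|\nabla f|^2}{\mu}$ gives no lower bound on the energy. You need the spectral theorem: the closure has no spectrum in $(0,M)$, which requires first proving essential self-adjointness on $C_c^\infty$. This is done in Proposition~\ref{prop:spectral}.
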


The minimizing constant in \eqref{eq:full-gap} is
the weighted mean
\[
 \frac{\br{\Gamma_{s,\xi}f}{\psi_\varepsilon}}
      {\br{\Gamma_{s,\xi}}{\psi_\varepsilon}}.
\]
The proof of Theorem \ref{thm:full-gap} is given in Sections~\ref{sec:auxiliary} and~\ref{sec:gap-proof}.

\begin{remark}
The proof of Theorem~\ref{thm:full-gap} uses  the many-particle Hardy inequality
\begin{equation}\label{multi_hardy}
 C_{d,N}\sum_{i<j}\int_{\mathbb R^{dN}}
       \frac{|h(x)|^2}{|x^i-x^j|^2}\,dx
 \le\int_{\mathbb R^{dN}}|\nabla h(x)|^2\,dx,
\end{equation}
where $C_{d,N}$ denotes the best constant. We prove  and use the lower estimate
\begin{equation}\label{C_dN_simple}
 C_{d,N}\ge\frac{(d-2)^2}{N},
\end{equation}
see Lemma~\ref{lem:hardy}.
In fact, this estimate is contained in Hoffmann-Ostenhof, Hoffmann-Ostenhof, Laptev and Tidblom
\cite{HHLT}, who also proved 
\begin{equation}\label{HHLT_est}
 C_{d,N}\ge\frac{(d-2)^2}{N}
 \max\left\{1,\frac{N}
 {1+\sqrt{1+\frac{3(d-2)^2}{2(d-1)^2}(N-1)(N-2)}}\right\},
\end{equation}
which is a strict refinement of \eqref{C_dN_simple} when $3 \leq d \leq 6$ and $N$ is sufficiently large.
For comparison, direct summation of the one-particle Hardy inequalities
gives $(d-2)^2/[2(N-1)]$.

The lower estimate \eqref{HHLT_est} is close to being sharp, as follows from the following upper estimate:
\begin{equation*}
C_{d,N} \leq \frac{d(d-2)}{N}.
\end{equation*}
It was proved in \cite[Theorem~6.2]{KV} using a probabilistic argument: a larger Hardy constant, combined with the weak existence theorem for SDEs with form-bounded drifts in \cite{K,KV}, would imply global weak existence for \eqref{part1} in a range excluded by the Bessel process counterexample of \cite{FJ}\footnote{It should be noted that there is a small error in the calculations in \cite{HHLT}: their proof actually produces a much stronger than they claim upper estimate $C_{d,N} \leq \frac{d(d-2)}{N-1}$. Both this estimate and the upper estimate of \cite{KV} can be strengthened further; the details will appear elsewhere.}. 
\end{remark}

\medskip

\textbf{3.}~Uniform spectral gap inequalities for 
particle systems have been studied under various convexity and
regularity assumptions. Guillin, Liu, Wu and Zhang \cite{GLWZ} obtain
explicit uniform constants using estimates on the conditional
one-particle measures and on the interaction Hessian.
Chafa\"i and Lehec \cite{CL} treat singular Gibbs measures with convex
interactions in ordered one-dimensional configurations.
In a related development, for the repulsive planar logarithmic gas with quadratic confinement,
Rosenzweig \cite{R26} recently obtained a logarithmic Sobolev inequality
uniform in the number of particles.
More recent criteria based on the free energy, including
\cite{BBD}, go beyond convexity while keeping assumptions on the
regularity and decomposition of the interaction.

These results do not directly give the present estimates uniformly
under regularization. Indeed, for an attractive logarithmic pair
potential,
\[
 \nabla^2\bigl(\nu\log|z|\bigr)
 =\frac{\nu}{|z|^2}
   \left(I-2\frac{z\otimes z}{|z|^2}\right).
\]
Its radial eigenvalue is $-\nu/|z|^2$, so the negative part of the
Hessian is unbounded at collisions. Our proof controls this term after
integration, using the many-particle Hardy inequality, rather than
requiring a pointwise lower bound on the Hessian.
The resulting spectral gap is uniform in $N$ and in $\varepsilon$,
and holds for all admissible functions, without a symmetry assumption
on the observable.

Our proof uses the integrated Bochner criterion for the spectral gap,
see Ledoux \cite[Proposition~1.3]{Led01} and the exposition of
Cattiaux--Guillin \cite[Theorem~1.3]{CG23}. This criterion is also used in the spectral gap estimates in \cite{GLWZ}.

The attractive logarithmic gas in arbitrary dimension also appears in
the work of Chodron de Courcel, Rosenzweig and Serfaty \cite{CdCRS}
on stability, uniqueness and propagation of chaos on the torus.
Their results concern the mean field dynamics and motivate the study
of quantitative properties of the finite system.
In $d\ge3$, the logarithmic interaction considered here differs from
the usual $d$-dimensional Coulomb interaction.
In dimension two, \eqref{part1} is the finite-particle approximation of
the Keller--Segel model. See \cite{FJ,Fournier-Tardy,Tardy} for stochastic weak existence for \eqref{part1} and the analysis of collisions, and \cite{Perthame,Dolbeault} for the corresponding PDE theory.

\medskip

\textbf{4.}~The following is an immediate consequence of the spectral gap inequality of Theorem \ref{thm:full-gap}.
Define the
probability measure
\begin{equation}
 d\mu_{s,\xi,\varepsilon}(x)
 :=\frac{\Gamma_{s,\xi}(x)\psi_\varepsilon(x)}
         {\br{\Gamma_{s,\xi}}{\psi_\varepsilon}}\,dx.
 \label{eq:confined-measure}
\end{equation}
Theorem~\ref{thm:full-gap} states  that
\begin{equation}
 \operatorname{Var}_{\mu_{s,\xi,\varepsilon}}(f)
 :=\int\left|f-\int f\,d\mu_{s,\xi,\varepsilon}\right|^2
            d\mu_{s,\xi,\varepsilon}
 \le\frac{s}{M_\psi}
       \int|\nabla f|^2\,d\mu_{s,\xi,\varepsilon}.
 \label{eq:confined-poincare}
\end{equation}
Let $\mathcal A$ now denote the nonnegative self-adjoint
operator associated with the closure of
\[
 f\mapsto\int|\nabla f|^2\,d\mu_{s,\xi,\varepsilon},
 \quad f\in C_c^\infty(\mathbb R^{dN}).
\]
(Indeed, this form is closable: on compact sets its density is bounded
below by a positive constant, so convergence in its norm
identifies the gradient in the sense of distributions. Its
closure is a Dirichlet form. The finite-mass cutoff argument
shows that constants belong to its domain and have zero energy.)
For $\varepsilon>0$, the corresponding diffusion solves
\begin{equation}
 dX_t^i=-\frac{X_t^i-\xi^i}{2s}\,dt
 -\frac{\nu}{N}\sum_{j\ne i}
   \frac{X_t^i-X_t^j}{|X_t^i-X_t^j|^2+\varepsilon}\,dt
 +\sqrt2\,dB_t^i.
 \label{eq:confined-sde}
\end{equation}
At $\varepsilon=0$ we use the closed form that we have just defined. The
same formula gives its diffusion operator away from collisions. The following corollary holds both for $\varepsilon>0$ and $\varepsilon=0$:

\begin{corollary}
\label{cor:confined-consequences}
Under the hypotheses of Theorem~\ref{thm:full-gap}, for every
$f\in L^2(\mu_{s,\xi,\varepsilon})$ and $t\ge0$,
\begin{equation}
 \left\|e^{-t\mathcal A}f
              -\int f\,d\mu_{s,\xi,\varepsilon}\right\|_{L^2(\mu_{s,\xi,\varepsilon})}
 \le e^{-M_\psi t/s}
       \left\|f-\int f\,d\mu_{s,\xi,\varepsilon}
             \right\|_{L^2(\mu_{s,\xi,\varepsilon})}.
 \label{eq:confined-relaxation}
\end{equation}
As a consequence, for every Lipschitz function $g:\mathbb R^d\to\mathbb R$, we have the following law of large numbers type result:
\begin{equation}
 \operatorname{Var}_{\mu_{s,\xi,\varepsilon}}
       \left(\frac1N\sum_{i=1}^Ng(x^i)\right)
 \le\frac{s\,\operatorname{Lip}(g)^2}{M_\psi N}.
 \label{eq:empirical-variance}
\end{equation}
\end{corollary}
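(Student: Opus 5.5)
The relaxation estimate \eqref{eq:confined-relaxation} follows from spectral calculus once we know that the Poincaré inequality \eqref{eq:confined-poincare} holds on the whole form domain of $\mathcal A$. The variance bound \eqref{eq:empirical-variance} is then a direct application of \eqref{eq:confined-poincare} to a particular test function.

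\emph{Relaxation.} Let $\mathcal E$ denote the closed form and $D(\mathcal E)$ its domain. For $f\in C_c^\infty$, \eqref{eq:confined-poincare} is Theorem~\ref{thm:full-gap}, because $C_c^\infty\subset L^2_{\Gamma_{s,\xi}\psi_\varepsilon}\cap W^{1,2}_{\mathrm{loc}}$. To pass to $D(\mathcal E)$, take $f_n\in C_c^\infty$ converging to $f$ in the form norm. Then $\mathcal E(f_n)\to\mathcal E(f)$, and $\operatorname{Var}_\mu(f_n)\to\operatorname{Var}_\mu(f)$ by $L^2(\mu)$ convergence. Hence the inequality holds on $D(\mathcal E)$.

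Since constants lie in $D(\mathcal E)$ and have zero energy, $\mathcal A1=0$. By self-adjointness, $e^{-t\mathcal A}$ preserves the subspace $H_0:=\{f:\int f\,d\mu=0\}=1^\perp$ and fixes constants. On $H_0$ the inequality reads $\mathcal E(f)\ge (M_\psi/s)\|f\|^2$, so the spectrum of $\mathcal A|_{H_0}$ lies in $[M_\psi/s,\infty)$. Spectral calculus then gives $\|e^{-t\mathcal A}g\|\le e^{-M_\psi t/s}\|g\|$ for $g\in H_0$. Applying this to $g=f-\int f\,d\mu$, and using $e^{-t\mathcal A}c=c$, yields \eqref{eq:confined-relaxation}.

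An alternative route for smooth data is to differentiate $\|e^{-t\mathcal A}g\|^2$ in $t$ and apply Grönwall. The spectral argument is cleaner and avoids regularity issues.

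\emph{Empirical variance.} Put $F(x)=\frac1N\sum_i g(x^i)$. We need $F$ admissible for \eqref{eq:confined-poincare}.

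\begin{itemize}
\item $F$ is Lipschitz, hence in $W^{1,\infty}_{\mathrm{loc}}$, with a.e.\ gradient $\nabla_{x^i}F=\frac1N\nabla g(x^i)$ (Rademacher).
\item $|F(x)|\le |g(0)|+\operatorname{Lip}(g)\,|x|$, and $\Gamma_{s,\xi}\psi_\varepsilon$ has finite second moments: the Gaussian factor absorbs $|x|^2$ in the mass argument of Lemma~\ref{lem:psi-mass}. So $F\in L^2(\mu)$. Alternatively, apply the theorem directly, since it is stated for $L^2_{\Gamma\psi_\varepsilon}\cap W^{1,2}_{\mathrm{loc}}$.
\end{itemize}

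Then
\[
|\nabla F|^2=\frac1{N^2}\sum_{i=1}^N|\nabla g(x^i)|^2\le\frac{\operatorname{Lip}(g)^2}{N}.
\]
Substituting this into \eqref{eq:confined-poincare} gives \eqref{eq:empirical-variance}.

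\emph{Main obstacle.} The only delicate point is extending the Poincaré inequality from smooth functions to the full form domain, together with the fact that constants lie in $D(\mathcal E)$; the latter is granted in the text. For $\varepsilon=0$ we must make sure that the closure does not allow functions outside $W^{1,2}_{\mathrm{loc}}$. This is guaranteed because the weight is locally bounded below, so form-norm limits have distributional gradients in $L^2_{\mathrm{loc}}$. Everything else is routine.
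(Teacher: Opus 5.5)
Your proposal is correct. The empirical-variance part is the same as the paper's argument: integrability via Lemma~\ref{lem:psi-mass}, the blockwise gradient bound, then \eqref{eq:confined-poincare}. For the relaxation estimate \eqref{eq:confined-relaxation}, however, you differ from the paper in two places.

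\emph{Extending the Poincar\'e inequality.} You extend \eqref{eq:confined-poincare} to the closed form domain by approximating with $C_c^\infty$ functions and using that both sides are continuous in the form norm. The paper instead identifies each element of the form domain as a $W^{1,2}_{\mathrm{loc}}$ function whose distributional gradient realizes the energy, and then applies Theorem~\ref{thm:full-gap} to it directly. Your version is shorter and never needs that identification. The $W^{1,2}_{\mathrm{loc}}$ issue in your last paragraph therefore plays no role in your own argument; it matters only if you want $\mathcal E(f)=\int|\nabla f|^2\,d\mu_{s,\xi,\varepsilon}$ for general $f$ in the form domain. One small omission: Theorem~\ref{thm:full-gap} is stated for real-valued functions. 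For complex-valued approximants you should apply it to the real and imaginary parts separately and add, as the paper does.

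\emph{Deriving the decay.} You obtain the exponential decay by spectral calculus on the reducing subspace $1^\perp$. This is legitimate because $1$ is an eigenvector of the self-adjoint operator $\mathcal A$. The paper uses an energy argument instead:
\begin{itemize}
\item it takes $f\in D(\mathcal A)$ and sets $u(t)=e^{-t\mathcal A}f-\int f\,d\mu_{s,\xi,\varepsilon}$;
\item it differentiates $\|u(t)\|^2$, using $\operatorname{Re}\langle\mathcal A u,u\rangle=\mathcal E(u)$, mean preservation (from $e^{-t\mathcal A}1=1$), and \eqref{eq:confined-poincare};
\item it applies Gr\"onwall, then extends to all of $L^2(\mu_{s,\xi,\varepsilon})$ by density and contractivity.
\end{itemize}

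Your spectral route covers every $f\in L^2$ at once, with no density step. It also mirrors the argument the paper itself uses for $\varepsilon>0$ in Proposition~\ref{prop:spectral} and Step~1 of the proof of Theorem~\ref{thm:full-gap}. The paper's Gr\"onwall route avoids reducing $\mathcal A$ and relies only on the real part of the form plus mean preservation. That is the version that carries over to non-self-adjoint generators, such as the skew-symmetric perturbations of Corollary~\ref{cor1}, where spectral calculus is unavailable.
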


\textbf{5.}~We next describe the application of the spectral gap inequality to proving heat kernel bounds for \eqref{part1}.

For $\varepsilon>0$, the regularized system is
\begin{equation}\label{part1_e}
 dX_{t,\varepsilon}^i
 =-\frac{\nu}{N}\sum_{j\ne i}
    \frac{X_{t,\varepsilon}^i-X_{t,\varepsilon}^j}
         {|X_{t,\varepsilon}^i-X_{t,\varepsilon}^j|_\varepsilon^2}\,dt
  +\sqrt2\,dB_t^i.
\end{equation}
Denote by $p_\varepsilon(t,x,y)$ its transition density (or heat kernel).
Classical theory provides a Markov semigroup $e^{-t\Lambda_\varepsilon}$,
strongly continuous on $L^r$ for $1\le r<\infty$ and on $C_\infty$,
whose integral kernel is $p_\varepsilon(t,x,y)$. Here
\begin{equation}\label{eq:physical-generator}
 \Lambda_\varepsilon=-\Delta+b_\varepsilon\cdot\nabla,\qquad
 b_\varepsilon^i(x)
 =\frac{\nu}{N}\sum_{j\ne i}
       \frac{x^i-x^j}{|x^i-x^j|_\varepsilon^2}.
\end{equation}
Since $b_\varepsilon=-\nabla\log\psi_\varepsilon$, we also have
\begin{equation}\label{eq:physical-weighted-generator}
 \Lambda_\varepsilon
 =-\psi_\varepsilon^{-1}
     \operatorname{div}(\psi_\varepsilon\nabla).
\end{equation}

For the limiting singular system \eqref{part1}, the following was proved in \cite{BK}. If $0\le\nu<2(d-2)$ and
$r\ge2$ satisfies
\[
 r>\frac{4}{4-\frac{2\nu}{d-2}},
\]
then
\begin{equation}\label{lim_exists}
 e^{-t\Lambda}
 =s\mbox{-}L^r\mbox{-}\lim_{\varepsilon\downarrow0}
       e^{-t\Lambda_\varepsilon},
\end{equation}
locally uniformly in $t\ge0$.
This semigroup has an integral kernel $p(t,x,y)$ (also with respect to the Lebesgue measure). The convergence in \eqref{lim_exists} is proved directly by showing that the sequence of solutions  of the approximating Kolmogorov equations $(\partial_t + \Lambda_{\varepsilon_n}) u_n=f$, $\varepsilon_n \downarrow 0$, is a Cauchy sequence in $L^\infty([0,T],L^r(\mathbb R^{dN}))$.

Fournier and Jourdain \cite{FJ} study weak existence by exploiting
the structure of the interaction kernel in dimension two.
In fact, the argument of \cite[Theorem~7 and Section~6]{FJ}
adapts to dimensions $d\ge2$ and yields global weak existence under
the condition
\[
 \nu\le 2d-\frac4{N-1},
\]
when the particles initially have an independent common atom-free law
with finite first moment. 
Furthermore, the pairwise It\^o estimate used in \cite[Theorem~5]{FJ} likewise
yields weak existence for exchangeable initial laws with finite first
moment, including laws with atoms, provided
$\nu<(d-1)N/(N-1)$.

In \cite[Theorem~6.1 and Example~6.1]{KV}, by combining the theory
of SDEs with form-bounded drifts developed there and in \cite{K,KS,KS2} with the many-particle Hardy
inequality, the authors proved for all $d \geq 3$ weak existence and constructed the corresponding
Feller semigroup under the $N$-independent sufficient condition
\[
    0 \leq \nu<2(d-2).
\]

The two-sided heat kernel bounds involve
a cutoff of the logarithmic interaction. Define
\begin{equation}\label{eta_def}
 \eta(r):=e^{(\nu/N)a(r)},\qquad
 a(r):=
 \begin{cases}
  -\log r,&0<r<1,\\
  r-1-2\log r,&1\le r\le2,\\
  1-2\log2,&r>2.
 \end{cases}
\end{equation}
Thus $\eta$ is non-increasing and belongs to
$C^{1,1}_{\mathrm{loc}}((0,\infty))$. Set
\[
 \varphi_\varepsilon(x):=\prod_{i<j}\eta(|x^i-x^j|_\varepsilon),
 \qquad \varphi:=\varphi_0,
\]
and, for $s>0$,
\begin{equation}\label{eq:scaled-weights}
 \varphi_{s,\varepsilon}(x)
 :=\prod_{i<j}\eta(s^{-1/2}|x^i-x^j|_\varepsilon)
 =\varphi_{\varepsilon/s}(s^{-1/2}x).
\end{equation}
In particular,
\begin{equation}\label{phi_s_def}
 \varphi_s(x):=\varphi_{s,0}(x)=\varphi(s^{-1/2}x).
\end{equation}
The cutoff is useful when describing collisions at the diffusive scale
$\sqrt s$. Independently of $s$ and $\varepsilon$,
\begin{equation}\label{eq:uniform-weight-lower}
 \varphi_{s,\varepsilon},\ \varphi_s
 \ge (e/4)^{\nu(N-1)/2}=:c_\varphi>0.
\end{equation}
This lower bound may depend on $N$.

Using the spectral gap inequality for this cutoff weight, proved in
Appendix~\ref{app:cutoff-gap}, we obtain the following result. Fix $T>0$ and write $p_0:=p$.

\begin{theorem}[Application to heat kernel bounds]
\label{thm1}
Let $d\geq3$ and $N\geq2$.
\begin{enumerate}[label={\rm (\roman*)}]
\item If $0<\nu<2(d-2)$, then there are positive constants
$c_1,c_2$ depending only on $d,N,\nu$ such that for all $\varepsilon \geq 0$
\begin{equation}
p_\varepsilon(t,x,y)
 \leq c_1\Gamma(c_2t,x-y)\varphi_{t,\varepsilon}(y)
 \label{eq:heat-upper}
\end{equation}
for all $0<t\le T$, and all $x,y\in\mathbb R^{dN}$ (a.e.\,if $\varepsilon=0$).

\medskip

\item If $0<\nu<\frac{(d-2)^2}{5d}$, then there are positive constants
$c_3,c_4$ depending only on $d,N,\nu$ such that for all $\varepsilon \geq 0$
\begin{equation}
c_3\Gamma(c_4t,x-y)\varphi_{t,\varepsilon}(y) \leq p_\varepsilon(t,x,y)
 \label{eq:heat-lower}
\end{equation}
for all $0<t\le T$, and all $x,y\in\mathbb R^{dN}$ (a.e.\,if $\varepsilon=0$).
\end{enumerate}
\end{theorem}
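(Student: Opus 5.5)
The statement to be proved is Theorem~\ref{thm1}, the two-sided heat kernel bounds. Part (i) is the upper bound from \cite{BK}, reproduced in Appendix~\ref{upper_bound_app}. Part (ii), the lower bound, is the new application of the cutoff spectral gap. I treat the two separately. Throughout, all estimates are proved for $\varepsilon>0$ with constants independent of $\varepsilon$; the case $\varepsilon=0$ then follows from the convergence \eqref{lim_exists}, which gives weak convergence of kernels and hence a.e.\ bounds.

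\emph{Upper bound (i).} I would work with $\Lambda_\varepsilon=-\psi_\varepsilon^{-1}\operatorname{div}(\psi_\varepsilon\nabla)$, which is symmetric in $L^2(\psi_\varepsilon dx)$, and replace $\psi_\varepsilon$ by the cutoff weight $\varphi_{t,\varepsilon}$ at scale $\sqrt t$.

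1. Prove a weighted Sobolev (Nash) inequality for $\varphi_{s,\varepsilon}$ at scale $s$, uniformly in $\varepsilon$. Write $h=\varphi^{1/2}f$ and apply the many-particle Hardy inequality (Lemma~\ref{lem:hardy}) with constant $(d-2)^2/N$. The cross terms are $\frac{\nu}{N}\sum|x^i-x^j|^{-2}$, and these are absorbed when $\nu<2(d-2)$.

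2. Run weighted Moser iteration and Davies' exponential perturbation $e^{\alpha\cdot x}$ to get $p_\varepsilon(t,x,y)\le C t^{-dN/2}e^{-|x-y|^2/(c t)}\cdot(\text{weight at }y)$.

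3. The factor $\varphi_{t,\varepsilon}(y)$ appears because the kernel with respect to the measure $\psi_\varepsilon dy$ is bounded, and $\psi_\varepsilon$ agrees with $\varphi_{t,\varepsilon}$ up to the Gaussian tail region at scale $\sqrt t$.

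\emph{Lower bound (ii).} I would use the Fabes--Stroock/Nash approach based on a Poincaré inequality for Gaussian-times-weight, which is where the cutoff spectral gap from Appendix~\ref{app:cutoff-gap} enters.

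1. \emph{Scaling.} Use the scaling of \eqref{eq:scaled-weights} to reduce to $t=1$ (with $\varepsilon/t$ in place of $\varepsilon$).

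2. \emph{Entropy differential inequality.} For fixed $y$, set $u(\tau,x)=p_\varepsilon(\tau,x,y)/\varphi$-type normalized, and consider $G(\tau)=\br{\Gamma_{s,\xi}\log u}{\varphi_{s,\varepsilon}}$. Differentiate in $\tau$ and integrate by parts. This yields $G'\ge -C+c\br{\Gamma_{s,\xi}|\nabla\log u|^2}{\varphi_{s,\varepsilon}}$, up to error terms coming from the mismatch between the drift $b_\varepsilon$ and $-\nabla\log\varphi_{s,\varepsilon}$ near scale $\sqrt s$. These errors are bounded since $\eta$ is $C^{1,1}$ and constant beyond $2$.

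3. \emph{Apply the cutoff spectral gap.} This gives $G'\ge -C+\frac{c}{s}\br{\Gamma|\log u-G|^2}{}$. Combined with mass conservation and the on-diagonal upper bound from (i), the standard Nash argument gives $G(1)\ge -C$.

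4. \emph{Near-diagonal lower bound.} Use Jensen in the Chapman--Kolmogorov identity $p(2,x,y)=\int p(1,x,z)p(1,z,y)\,dz$, with the adjoint kernel handled symmetrically via the $\psi_\varepsilon$-symmetry. This gives $p_\varepsilon(1,x,y)\ge c\,\varphi_{1,\varepsilon}(y)$ for $|x-y|\le 1$.

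5. \emph{Chaining.} The standard chaining argument over $k\sim|x-y|^2/t$ steps upgrades this to the Gaussian factor $\Gamma(c_4t,x-y)$. The uniform lower bound $\varphi\ge c_\varphi$ from \eqref{eq:uniform-weight-lower} lets the weights at the intermediate points be dropped, at the cost of $N$-dependent constants.

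\emph{Main obstacle.} I expect the hardest step to be step 2, controlling the error from replacing $\psi_\varepsilon$ by the cutoff $\varphi_{s,\varepsilon}$ in the entropy computation, together with obtaining the cutoff spectral gap itself. The Bochner/Hardy argument now has to absorb additional Hessian and drift-mismatch terms of size $\frac{\nu}{N}|x^i-x^j|^{-2}$ on the region $|x^i-x^j|\le2\sqrt s$. This is what shrinks the admissible range from $(d-2)^2/(3d)$ to $(d-2)^2/(5d)$. I would first try to bound these terms by one extra multiple of the Hardy sum.
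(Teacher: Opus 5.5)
\textbf{Part (i) has a gap.} The step that produces the weight $\varphi_{t,\varepsilon}(y)$ is missing, and the justification in your step~3 is false.

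\emph{Why the step fails.} $\Lambda_\varepsilon$ is symmetric in $L^2(\psi_\varepsilon\,dx)$, not in $L^2(\varphi_{s,\varepsilon}\,dx)$. So a Sobolev inequality for $\varphi_{s,\varepsilon}$ does not by itself drive a Moser iteration for $e^{-t\Lambda_\varepsilon}$. Step~3 does not supply the link, for two reasons.
\begin{enumerate}
\item The kernel with respect to $\psi_\varepsilon\,dy$ is not bounded by $Ct^{-dN/2}$ uniformly in space. For $N=2$ the relative coordinate is a Bessel-type process, and $p_\varepsilon(t,x,x)/\psi_\varepsilon(x)$ grows like $|x^1-x^2|^{\nu/2}$.
\item $\varphi_{t,\varepsilon}=\psi_{t,\varepsilon}e^{R}$, with $R=\frac{\nu}{N}\sum_{i<j}g(t^{-1/2}|y^i-y^j|_\varepsilon)$ and $g(r)=a(r)+\log r$. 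The factor $e^{R}$ is unbounded as soon as some pair of $y$ is separated by $\gg\sqrt t$. This has nothing to do with the Gaussian tail in $|x-y|$.
\end{enumerate}
What actually makes the argument work is that $R$ is Lipschitz with constant $C/\sqrt s$. Only $R(x)-R(y)$ enters, and the resulting factor $e^{C|x-y|/\sqrt t}$ is absorbed by the Gaussian.

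\emph{How the paper does it.} It runs Moser--Davies for the auxiliary operator $A=-\varphi^{-1}\operatorname{div}(\varphi\nabla)$, which is symmetric in $L^2_\varphi$. It then conjugates: $U\Lambda_\varepsilon U^{-1}=A+V$, with $U=(\psi/\varphi)^{1/2}$ and $V=\frac14|\nabla R|^2+\frac12AR$. The bound $\|V\|_\infty\le C/s$ is not automatic, because $\nabla\log\varphi$ is not bounded uniformly in $\varepsilon$. It rests on the cancellation $\nabla\log\varphi\cdot\nabla R=\sum_{i<j}G_{ij}\cdot(H_i-H_j)$. Here $G_{ij}$ is the pair contribution to $\nabla_i\log\varphi$, $H_i=\nabla_iR$, $|G_{ij}|\,|x^i-x^j|\le\nu/N$, and $|H_i-H_j|\le C|x^i-x^j|/s$. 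Trotter's formula then gives $p_\varepsilon\le e^{C(t/s+|x-y|/\sqrt s)}q_{s,\varepsilon}\,\varphi_{s,\varepsilon}(y)$, and one takes $s=t$.

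\emph{A simpler repair.} In the spirit of your own (ii), run Moser--Davies directly for $\Lambda_\varepsilon=A+h_\varepsilon\cdot\nabla$ and its $L^2_\varphi$-adjoint. Absorb $\|h_\varepsilon\|_\infty\le C/\sqrt s$ by Young's inequality. This is what the proof of Corollary~\ref{cor1} does with the skew part set to zero. It bounds exactly $p_\varepsilon(t,x,y)/\varphi_{t,\varepsilon}(y)$.

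\emph{Two points in your step~1.}
\begin{itemize}
\item Lemma~\ref{lem:hardy} controls $-\Delta\log\psi$, so you must compare it with $-\Delta\log\varphi_{s,\varepsilon}$ on the transition region. The paper does this in Lemma~\ref{appup:divergence-lemma}, at the cost of a term $\frac Cs\|u\|_{2,\varphi}^2$.
\item With $h=\varphi^{1/2}f$, reaching the threshold $2(d-2)$ requires Young's inequality on $\int\varphi^{1/2}\nabla f\cdot\nabla h$. If you simply drop the $-\frac14|\nabla\log\varphi|^2$ term in the ground-state identity, the coefficient $\frac12$ stays on $-\Delta\log\varphi$ and you only get $\nu<d-2$.
\end{itemize}

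\textbf{Part (ii) is essentially sound but takes a different route.} You work with the physical kernel directly. You absorb the mismatch drift $h_\varepsilon$ in the $G$-derivative and handle the Jensen step through $\psi_\varepsilon$-duality. This is the route of the proof of Corollary~\ref{cor1}.

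The main proof in Appendix~\ref{app:direct-heat} instead proves the entropy and $G$-bounds for the symmetric, exactly mass-conserving kernel $q_{s,\varepsilon}$. It then transfers once, through the two-sided comparison of Lemma~\ref{lem:gaussian-transfer}. That keeps the Nash machinery symmetric and yields both bounds from a single comparison, at the cost of the gauge computation and the $AR$ cancellation. Your version needs only $\|h_\varepsilon\|_\infty\le C/\sqrt s$ and the Lipschitz bound on $R$, provided you do the following:
\begin{enumerate}
\item[(a)] Normalize in the forward variable, $u=p_\varepsilon(\tau,z,\cdot)/\varphi_{s,\varepsilon}$; equivalently, by $\psi_\varepsilon$-symmetry, $x\mapsto p_\varepsilon(\tau,x,y)\psi_\varepsilon(x)/(\psi_\varepsilon(y)\varphi_{s,\varepsilon}(x))$. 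Dividing $x\mapsto p_\varepsilon(\tau,x,y)$ by a constant gives no conserved $\varphi$-mass, and exact mass is what turns the Poincar\'e term into a bound in terms of $G$.
\item[(b)] Control the extra term $m^{-1}\langle\Gamma_{s,\xi}(R-R(v))\rangle_\varphi$ that the duality produces in Jensen. Use the Lipschitz bound on $R$ and Lemma~\ref{lem:mass2}.
\item[(c)] Supply more than mass conservation and the on-diagonal bound for the step ``$G\ge-C$''. The paper dominates the kernel by $\Gamma_{s,\xi}$ using the off-diagonal bound from (i), with $t\le s/\tau$ and $\tau$ large so that the $-C/s$ error is absorbed by $m\tilde Q'$. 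It also uses Nash's entropy estimate, Lemma~\ref{prop-entropy}.
\end{enumerate}

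Your chaining ends with $\varphi_{t/L,\varepsilon}(y)\le\varphi_{t,\varepsilon}(y)$, which points the wrong way. You can fix this in either of two ways:
\begin{itemize}
\item Use $\varphi_{t/L,\varepsilon}\ge L^{-\nu(N-1)/4}\varphi_{t,\varepsilon}$, which holds because $a(r)+\log r$ is nondecreasing, and absorb $L^{-C}$ into $e^{-CL}$.
\item Do as the paper does: chain without weights, then add one half-time step with the near-diagonal bound at auxiliary scale $\tau t/2\ge t$.
\end{itemize}

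Finally, the drop of the range to $(d-2)^2/(5d)$ comes only from the cutoff Hessian, whose radial eigenvalue reaches $2/|z|^2$ on $1<r<2$. The drift mismatch plays no role in the cutoff spectral gap.
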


Such bounds provide a quantitative description of the competition between Brownian diffusion and singular interactions in a particle system. In particular, the upper bound provides occupation-time estimates for \eqref{part1}.

Alternatively we could appeal to the general weighted Gaussian estimates of Sturm \cite{St} and the weighted Poincar\'e inequality of Fabes--Kenig--Serapioni \cite{FKS}; see also \cite{PR}. At the level of the weighted Dirichlet-form realization, this approach can cover
the larger range $0<\nu<2d$ for both bounds.

The weighted spectral gap inequality of \cite{FKS} (cf. also \cite{PR}) does not imply, in our context, an $N$-independent estimate on the spectral gap. Indeed, our analysis decouples dimension $d$ from the number of particles $N$, while a general result like the one in \cite{FKS, PR}, being applied in $\mathbb R^{dN}$ with a weight, naturally does not.

One difference between our heat kernel bounds and those obtained via \cite{FKS, St} is at the a posteriori level. Namely, in Theorem \ref{thm1} the semigroup and the infinitesimal generator of \eqref{part1}  are defined via \eqref{lim_exists} on the physical space rather than associated to a Dirichlet form with singular weight.

\medskip

We obtain with some additional effort the following non-reversible extension. We state and prove it at the a priori level, i.e.\,for smooth coefficients, but with constants independent of the regularization and of derivatives of the perturbation.

\begin{corollary}
\label{cor1}
Let $d\ge3$, $N\ge2$, $\varepsilon>0$, and
$C \in (L^\infty \cap C^\infty) (\mathbb{R}^{Nd},\mathbb R^{dN \times dN})$, $C^{\top}=-C$. The upper bound of Theorem~\ref{thm1}(i) holds for
$0<\nu<2(d-2)$, and the lower bound of Theorem~\ref{thm1}(ii) holds for
$0<\nu<(d-2)^2/(5d)$, for the heat kernel $p^C_\varepsilon(t,x,y)$ of the operator
\begin{align*}
\Lambda^C_\varepsilon & =- \nabla \cdot(I +C)\cdot \nabla +b_\varepsilon \cdot(I+C)\cdot \nabla \\
& = - \Delta + \big(b_\varepsilon + r^C_\varepsilon\big)\cdot \nabla,
\qquad r^C_\varepsilon:=-\nabla C + b_\varepsilon C
\end{align*}
where $(\nabla C)_j:=\sum_{i=1}^{dN}\partial_i C_{ij}$,
with $\partial_i$ denoting differentiation in the $i$th scalar coordinate.
The constants may additionally depend on $\|C\|_\infty$, but not on
derivatives of $C$ or on $\varepsilon$.
\end{corollary}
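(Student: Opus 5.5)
The plan is to rewrite $\Lambda^C_\varepsilon$ in weighted divergence form and check that every energy estimate behind Theorem~\ref{thm1} survives the antisymmetric perturbation. The only new terms are cross terms bounded by $\|C\|_\infty$, and no derivative of $C$ is ever needed.

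Since $b_\varepsilon=-\nabla\log\psi_\varepsilon$, a direct computation gives
\[
 \Lambda^C_\varepsilon=-\psi_\varepsilon^{-1}\nabla\cdot\bigl(\psi_\varepsilon(I+C)\nabla\bigr),
\]
which extends \eqref{eq:physical-weighted-generator}. We therefore work with the bilinear form
\[
\mathcal E^C(u,v)=\br{(I+C)\nabla u\cdot\nabla v}{\psi_\varepsilon}
\]
on $L^2_{\psi_\varepsilon}$. Two facts do most of the work.
\begin{itemize}
\item Because $C^\top=-C$, we have $\mathcal E^C(u,u)=\br{|\nabla u|^2}{\psi_\varepsilon}$. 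So the dissipation is exactly the symmetric one, and the strong sector bound $|\mathcal E^C(u,v)|\le(1+\|C\|_\infty)\,\mathcal E(u,u)^{1/2}\mathcal E(v,v)^{1/2}$ holds.
\item The adjoint of $\Lambda^C_\varepsilon$ in $L^2_{\psi_\varepsilon}$ is $\Lambda^{-C}_\varepsilon$. Hence $\psi_\varepsilon$ is invariant, and $p^C_\varepsilon(t,x,y)\psi_\varepsilon(y)^{-1}=p^{-C}_\varepsilon(t,y,x)\psi_\varepsilon(x)^{-1}$. This lets us pass between estimates in $x$ and in $y$ exactly as in the reversible case, with the same class of operators.
\end{itemize}
Throughout, $C$ stays inside the form and we never integrate by parts onto $C$. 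The term $\nabla C$ in $r^C_\varepsilon$ never appears, which is why the constants do not depend on derivatives of $C$.

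For the upper bound, we repeat the argument of Appendix~\ref{upper_bound_app}: weighted Sobolev inequality, weighted Moser iteration, and the Davies perturbation $e^{\alpha\cdot x}$ (or $e^{\alpha\phi}$ with $|\nabla\phi|\le1$). Testing the equation against $\zeta^2u^{p-1}$ or $e^{2\alpha\cdot x}u$ gives the principal term $\br{|\nabla(\cdot)|^2}{\psi_\varepsilon}$ unchanged. The remaining cross terms have the form $\br{(I+C)\nabla u\cdot\nabla\zeta\,\zeta u}{\psi_\varepsilon}$ or $\alpha\cdot(I+C)\nabla u\,u$. Cauchy--Schwarz bounds them by $\delta\br{|\nabla u|^2}{}+\delta^{-1}(1+\|C\|_\infty)^2\br{|\nabla\zeta|^2u^2}{}$, respectively $(1+\|C\|_\infty)^2|\alpha|^2\|u\|^2$. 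The iteration then closes with constants depending on $\|C\|_\infty$. The range $0<\nu<2(d-2)$ comes only from the weighted Sobolev/Hardy input, which is unaffected.

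For the lower bound, we follow Appendix~\ref{app:direct-heat}. First, an on-diagonal/near-diagonal lower bound via the Fabes--Stroock/Nash entropy functional: take
\[
G(t)=\br{\Gamma_{s,\xi}\varphi_{s,\varepsilon}\log p^C_\varepsilon(t,x,\cdot)}{}
\]
(or its analogue used there) and differentiate in $t$. The symmetric part produces the dissipation $\br{\Gamma\varphi|\nabla\log p|^2}{}$ as before. This dissipation is fed into the spectral gap inequality for the cutoff weight (Appendix~\ref{app:cutoff-gap}). The new term is
\[
\br{\Gamma\varphi\,C\nabla\log p\cdot\nabla\log(\Gamma\varphi)}{}
\]
(the term with $C\nabla\log p\cdot\nabla\log p$ vanishes by antisymmetry). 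We bound it by
\[
\tfrac14\br{\Gamma\varphi|\nabla\log p|^2}{}+\|C\|_\infty^2\br{\Gamma\varphi|\nabla\log(\Gamma\varphi)|^2}{}.
\]
The last quantity is of order $1/s$. The Gaussian part is explicit. The cutoff part is controlled by $\frac{\nu^2}{N^2}\sum_{i<j}\br{\Gamma\varphi\,|x^i-x^j|^{-2}\mathbf 1_{\{|x^i-x^j|\lesssim\sqrt s\}}}{}$, which is finite and $\lesssim 1/s$ by the many-particle Hardy inequality (Lemma~\ref{lem:hardy}) applied to $(\Gamma\varphi)^{1/2}$ in the allowed range of $\nu$.

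Losing a fraction of the dissipation only changes the constant in front of the spectral gap term, so the differential inequality for $G$ keeps the same structure, and we obtain the near-diagonal bound with constants depending on $\|C\|_\infty$. The chaining argument to off-diagonal Gaussian lower bounds uses only the semigroup property and this near-diagonal bound (applied also to $\Lambda^{-C}_\varepsilon$ via duality). It therefore goes through verbatim.

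The main obstacle is this entropy step. We must check that the extra term $\br{\Gamma\varphi|\nabla\log\varphi_{s,\varepsilon}|^2}{}$ is finite and scale-correct under exactly the restriction $\nu<(d-2)^2/(5d)$ already used in Theorem~\ref{thm1}(ii). If the Hardy bound is too tight there, the first fallback is to absorb this term using the slack in $M_\psi$-type constants for the cutoff weight, at the cost of worse constants, rather than a smaller $\nu$-range.
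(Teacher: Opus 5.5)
There is a genuine gap: you never reconcile the weight $\psi_\varepsilon$, in which $\Lambda^C_\varepsilon$ is in divergence form and your duality holds, with the cutoff weight $\varphi=\varphi_{s,\varepsilon}$, in which every tool you invoke lives.

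\emph{Why the tools need $\varphi$.} The weighted Sobolev inequality of Lemma~\ref{appup:weighted-sobolev-lemma} uses $\varphi\ge c_\varphi$. It is false for $\psi_\varepsilon$, which tends to $0$ when particles separate. The cutoff spectral gap, the bound $m_{s,\varepsilon}(\xi)\ge c_\varphi$, and the factor $\varphi_{t,\varepsilon}(y)$ in the target bounds also all refer to $\varphi$.

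\emph{Why the reversible bridge fails.} For $C=0$ the bridge is Lemma~\ref{lem:gaussian-transfer}: conjugation by $(\psi/\varphi)^{1/2}$ yields $A+V$ with $V$ bounded, and positivity plus Lie--Trotter compares the semigroups. For $C\neq0$ this breaks. With $h_\varepsilon=\nabla\log(\varphi/\psi)$, the conjugated form is
\[
\br{(I+C)\nabla f\cdot\nabla g}{\varphi}+\br{Vfg}{\varphi}-\tfrac12\br{g\,(Ch_\varepsilon)\cdot\nabla f}{\varphi}+\tfrac12\br{f\,(Ch_\varepsilon)\cdot\nabla g}{\varphi}.
\]
The last two terms form a skew first-order operator, not a potential, so the domination argument does not apply. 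Your proposal never mentions $h_\varepsilon$. Relatedly, your $G$-computation for $\log p^C_\varepsilon$ against $\Gamma_{s,\xi}\varphi$ is not ``as before''. The kernel $p^C_\varepsilon$ solves a $\psi$-weighted forward equation, so even for $C=0$ the derivative contains cross terms pairing $\nabla\log p^C_\varepsilon$ with the singular fields $\nabla\log\varphi$ and $\nabla\log\psi$.

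The missing step, which is what the paper does, is to work directly with $k^{\pm C}=p^{\pm C}_\varepsilon/\varphi$ in $L^2_\varphi$. There integration by parts gives \eqref{eq:cor-form}: the form $\br{(I\pm C)\nabla f\cdot\nabla g}{\varphi}$ plus the drift term $\br{[h_\varepsilon\cdot(I\pm C)\nabla f]\,g}{\varphi}$, with $\|h_\varepsilon\|_\infty\lesssim s^{-1/2}$ by \eqref{bb_est}. This bounded drift must be carried through every step:
\begin{itemize}
\item In the Moser iteration it is handled by Young's inequality.
\item In the Nash argument it costs $O(1/s)$ in the entropy derivative, compensated by adding a multiple of $t/s$, and $O(s^{-1/2})$ in the moment derivative.
\item In $\partial_tG$ it costs $O(1/s)$, absorbed by $m_{s,\varepsilon}(\xi)\tilde Q'(t)$ for $\tau$ large.
\item In the Jensen step, the duality \eqref{eq:cor-duality} holds with respect to $\psi$, not $\varphi$. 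Trading $k^{C}(r/2,\cdot,v)$ for $k^{-C}(r/2,v,\cdot)$ therefore produces the factor $\exp\{\log\frac{\varphi(\cdot)}{\psi(\cdot)}-\log\frac{\varphi(v)}{\psi(v)}\}$. This must be bounded using $\|h_\varepsilon\|_\infty$ and Lemma~\ref{lem:mass2}; it is not ``exactly as in the reversible case.''
\end{itemize}
Once you work in $L^2_\varphi$, your ``main obstacle'' disappears. The $\nabla\log\varphi$ contributions of the divergence and drift parts cancel. The new $C$-term in $\partial_tG$ is then, up to sign, $\br{C\nabla\Gamma_{s,\xi}\cdot\nabla\log(\varepsilon_1+u)}{\varphi}$. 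It is controlled by $\br{\Gamma_{s,\xi}|\nabla\log\Gamma_{s,\xi}|^2}{\varphi}\lesssim1/s$ (Lemma~\ref{lem:mass2}), with no Hardy inequality needed.
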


This operator is a non-reversible skew-symmetric perturbation of $\Lambda_\varepsilon=-\Delta + b_\varepsilon \cdot \nabla$.
Namely, writing $b_\varepsilon=-\nabla \log \psi_\varepsilon$, $$\psi_\varepsilon(x)=\prod_{1 \leq i<j \leq N} |x^i-x^j|_\varepsilon^{-\frac{\nu}{N}} \quad (\text{formal invariant density of \eqref{part1_e}, i.e.\,$\Lambda_\varepsilon^\ast \psi_\varepsilon=0$}),$$ the additional drift  $r^C_\varepsilon$ is divergence-free with respect to $\psi_\varepsilon dx$, i.e.
$$
 {\rm div\,}\psi_\varepsilon r^C_\varepsilon=0.
$$

\begin{example}
Let $d=3$. Consider a constant skew-symmetric block-diagonal matrix $C={\rm diag}(J_\omega,\dots,J_\omega) \in \mathbb R^{3N \times 3N}$, where 
$$
J_\omega=\left(
\begin{array}{lll}
0 & -\omega_3 & \omega_2 \\
\omega_3 & 0 & -\omega_1 \\
-\omega_2 & \omega_1 & 0
\end{array}
\right),
$$
and the vector $\omega=(\omega_1,\omega_2,\omega_3)$ is fixed.
The action of each diagonal block $J_\omega$ on $\mathbb R^3$ is
the cross product with $\omega$: $J_\omega v=\omega\times v$.
Since $C$ is constant,
\[
 r_\varepsilon^C=b_\varepsilon C
 =-\bigl(\omega\times b_\varepsilon^i\bigr)_{i=1}^N.
\]
Thus, the particle system of Corollary \ref{cor1} is 
$$
dX_t^i = - \frac{\nu}{N}\sum_{j=1, j \neq i}^N \frac{X_t^i-X_t^j}{|X_t^i-X_t^j|_\varepsilon^2}dt + \frac{\nu}{N} \sum_{j=1, j \neq i}^N \frac{\omega \times (X_t^i-X_t^j)}{|X_t^i-X_t^j|_\varepsilon^2}dt + \sqrt{2}dB_t^i.
$$
The second interaction is perpendicular to the corresponding radial interaction, i.e.\,$(x^i-x^j) \cdot \big(\omega \times (x^i-x^j)\big)=0$. The second interaction thus introduces circulation around the axis of $\omega$ without affecting the strength of the radial attraction.
\end{example}

This non-reversible extension also follows from the weighted Gaussian
bounds of Baadi \cite[Theorem~1(2)]{Baa}, which allow real
nonsymmetric coefficients.

If $\nu=0$, i.e.\,there is no attraction between the particles, then the two-sided heat kernel bounds for $\Lambda_\varepsilon^C$ become two-sided Gaussian bounds -- this special case was handled earlier by Osada \cite{O}. Later Qian-Xi \cite{QX} extended Osada's result to skew-symmetric $C$ with entries in ${\rm BMO}$. So, there remains the question whether Corollary \ref{cor1} can be extended to $C=-C^{\top}$ with entries in ${\rm BMO}$.

\medskip

\textbf{6.}~Pointwise estimates on transition densities have
been studied by several different methods. Osada's results apply to finite stochastic vortex systems
\cite{O}. The singular interactions in that example are
divergence-free, and the bounds have the ordinary Gaussian form.
Hao, R\"ockner and Zhang \cite{HRZ} obtain two-sided Gaussian
estimates for singular diffusions in mixed integrability classes and
apply their framework to finite particle systems. Their usual
pair-interaction condition does not include the critical
inverse-distance singularity in \eqref{part1}.
Landim \cite{L} proves upper estimates for the joint transition
probabilities of finitely many exclusion particles. These differ
from estimates for a tagged particle in an infinite system, such as
those in \cite{GGM}.

Graczyk--Sawyer \cite{GS1,GS2} obtain explicit formulas and sharp
estimates for Dyson and radial Dunkl kernels, while
Dziuba\'nski--Hejna \cite{DH} treat the full Dunkl heat kernel.
These results concern repulsive interactions and root-system
geometry. Ren--Zhang \cite{RZ} obtain heat kernel bounds for kinetic
equations with singular drifts and include a second-order particle
application.

Weighted methods also occur in attractive particle models.
Andres--von Renesse \cite{AvR} use Muckenhoupt weights to establish uniqueness and regularity for
interacting Bessel particles on an interval, including an attractive
regime.

In the two-particle case $N=2$, the two-sided heat kernel bounds are known in the literature, see \cite{MS, MSS}.
The many-particle problem is substantially different because its singular set is the union of intersecting collision hyperplanes and the weight contains all pair interactions.  In particular, transferring the argument of  \cite{MS, MSS} more or less directly would lead to a condition on $\nu$ that degenerates to $\nu=0$ as $N$ goes to infinity. The approach of running Nash's and Moser-Davis' methods in the weighted setting with frozen time scale -- the one that we use in Appendicies \ref{app:direct-heat} and \ref{upper_bound_app} -- appeared in \cite{MS}.

\bigskip

\section{Notation} \label{notations_sect}

\begin{itemize}
\item Let $\mathcal B(X,Y)$ denote the space of bounded linear operators between Banach spaces $X \rightarrow Y$,  endowed with the operator norm $||\cdot ||_{X\rightarrow Y}$, and set $\mathcal B(X):=\mathcal B(X,X)$.
\item We write $T=s\mbox{-} Y \mbox{-}\lim_n T_n$, for $T, T_n \in \mathcal B(X,Y)$ if
$$
\lim_n\|Tf- T_nf\|_Y=0 \quad \text{ for every $f \in X$}.
$$

\item Put
$$
\langle f,g\rangle:=\int_{\mathbb R^{dN}}f(x)\overline{g(x)}\,dx,\qquad \langle h\rangle:=\int_{\mathbb R^{dN}}h(x)\,dx.
$$
We occasionally switch to the usual integral notations when it makes the calculations easier to follow.

\item
The Hardy, Bochner and spectral gap inequalities are first proved for real-valued functions. The complex-valued functions appear only when we appeal to the theory of self-adjoint operators and their sesquilinear forms.

\item We write for brevity $\Gamma_s(x)=\Gamma(s,x)$, the Gaussian density.

\item
Given a weight $\rho$ on $\mathbb R^{dN}$, for instance, $\rho=\Gamma_{s,\xi}\psi_\varepsilon$, we write
$$
\langle f,g\rangle_\rho:=\int_{\mathbb R^{dN}}f(x)\overline{g(x)}\rho(x)\,dx,\qquad \langle h\rangle_\rho:=\int_{\mathbb R^{dN}}h(x)\rho(x)\,dx.
$$

\item Let $L^p = L^p(\mathbb{R}^{dN}, dx)$, $W^{1,p} = W^{1,p}(\mathbb{R}^{dN}, dx)$ denote the usual Lebesgue and Sobolev spaces, respectively.

\item Denote $L^p_\rho=L^p(\rho)=L^p(\mathbb R^{dN},\rho(x)dx)$.
For the weights used below, $W^{1,2}(\mathbb R^{dN},\rho\,dx)$
consists of $f\in W^{1,2}_{\mathrm{loc}}$ such that
$f$ and $\nabla f$ belong to $L^2_\rho$.

\item Put $a \wedge b:=\min\{a,b\}$, $a \vee b:=\max\{a,b\}$.

\item If $x=(x^1,\dots,x^N) \in \mathbb R^{dN}$, then $\nabla$ denotes the full gradient on $\mathbb R^{dN}$ and $\nabla_i=\nabla_{x^i}$ the gradient in the $d$-dimensional variable $x^i$.

\item We denote by $(\nabla_i f)_\alpha$ the $\alpha$-th component of
the gradient $\nabla_i f=\nabla_{y^i}f$, that is,
$$
(\nabla_i f)_\alpha = \frac{\partial f}{\partial (y^i)_\alpha}, \qquad 1\le i \le N,\quad 1\le\alpha\le d,
$$
where $(y^i)_\alpha$ denotes the $\alpha$-th coordinate of $y^i \in \mathbb{R}^d$.

\item $C_\infty:=\{f\in C_b(\mathbb R^{dN})\mid\lim_{|x|\to\infty}f(x)=0\}$ endowed with the $\sup$-norm.

\item For any $p>1$, we use $p^\prime$ to denote its conjugate $p/(p- 1)$; if $p=1$, then $p'=\infty$.

\item We denote by $C_{\mathrm{loc}}^{1,1}(\mathbb{R}^{dN})$
the class of continuously differentiable functions whose gradients
are locally Lipschitz continuous. More precisely,
$u\in C_{\mathrm{loc}}^{1,1}(\mathbb{R}^{dN})$ if $u\in C^1(\mathbb{R}^{dN})$
and, for every compact set $K\subset\mathbb{R}^{dN}$, there exists
a constant $C_K<\infty$ such that
$$
  |\nabla u(x)-\nabla u(y)|
  \le C_K|x-y|,
  \qquad x,y\in K.
$$

\end{itemize}

\medskip

\bigskip

\section{Auxiliary estimates used in the proof of Theorem \ref{thm:full-gap}}
\label{sec:auxiliary}

\begin{lemma}
\label{lem:psi-interaction}
Let $\varepsilon>0$. For every $z\ne0$,
\begin{equation} \label{eq:full-pair-derivatives}
\nabla^2(-\log|z|_\varepsilon) = -\frac{I}{|z|_\varepsilon^2}  + \frac{2z\otimes z}{|z|_\varepsilon^4}.
\end{equation}
Moreover,
\begin{equation}\label{eq:full-pair-hessian}
-\frac{I}{|z|^2} \le \nabla^2(-\log|z|_\varepsilon) \le \frac{I}{|z|^2},
\qquad
\bigl|\nabla(-\log|z|_\varepsilon)\bigr| \le \frac{1}{|z|}.
\end{equation}
These bounds hold uniformly in $\varepsilon>0$.
\end{lemma}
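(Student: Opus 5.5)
We compute directly, writing $r:=|z|_\varepsilon=(|z|^2+\varepsilon)^{1/2}$ and $g(z):=-\log|z|_\varepsilon=-\tfrac12\log(|z|^2+\varepsilon)$.

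\textbf{Derivatives.} Differentiating once gives
\[
\nabla g(z)=-\frac{z}{|z|^2+\varepsilon}=-\frac{z}{|z|_\varepsilon^2}.
\]
Differentiating the product $-z\,(|z|^2+\varepsilon)^{-1}$ again gives
\[
\nabla^2 g=-\frac{I}{|z|_\varepsilon^2}+\frac{2z\otimes z}{|z|_\varepsilon^4},
\]
which is \eqref{eq:full-pair-derivatives}. From the formula for $\nabla g$ we also get $|\nabla g|=|z|/|z|_\varepsilon^2$. Since $|z|^2\le|z|_\varepsilon^2$, this is at most $|z|/|z|^2=1/|z|$, which is the gradient bound in \eqref{eq:full-pair-hessian}.

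\textbf{Hessian bounds.} The matrix $\nabla^2 g$ is symmetric and diagonalizes in an orthonormal basis adapted to $z$.
\begin{itemize}
\item On $z^\perp$ the eigenvalue is $-1/|z|_\varepsilon^2$.
\item In the radial direction the eigenvalue is
\[
\lambda_{\mathrm{rad}}=-\frac{1}{|z|_\varepsilon^2}+\frac{2|z|^2}{|z|_\varepsilon^4}=\frac{|z|^2-\varepsilon}{|z|_\varepsilon^4}.
\]
\end{itemize}
It therefore suffices to show that every eigenvalue lies in $[-1/|z|^2,\,1/|z|^2]$.

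The tangential eigenvalue has absolute value $1/|z|_\varepsilon^2\le 1/|z|^2$.

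For the radial eigenvalue, $\bigl||z|^2-\varepsilon\bigr|\le|z|^2+\varepsilon=|z|_\varepsilon^2$, so
\[
|\lambda_{\mathrm{rad}}|\le \frac{1}{|z|_\varepsilon^2}\le\frac{1}{|z|^2}.
\]
This gives the two-sided operator inequality in \eqref{eq:full-pair-hessian}.

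None of these constants involve $\varepsilon$, so the bounds hold uniformly in $\varepsilon>0$. The only step needing any care is the radial eigenvalue, where the numerator $|z|^2-\varepsilon$ can be of either sign. The simple bound $\bigl||z|^2-\varepsilon\bigr|\le|z|_\varepsilon^2$ handles both signs at once.
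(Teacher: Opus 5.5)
Your proposal is correct and follows essentially the same route as the paper: you compute the gradient and Hessian directly, and the scalar $(|z|^2-\varepsilon)/|z|_\varepsilon^4$ that you isolate as the radial eigenvalue is the same quantity the paper bounds by $1/|z|_\varepsilon^2$. The only cosmetic difference is that the paper obtains the two-sided bound from the Loewner inequalities $0\le z\otimes z\le |z|^2 I$ instead of an explicit eigendecomposition.
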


\begin{proof}
Differentiating
$-\log|z|_\varepsilon=-\tfrac12\log(|z|^2+\varepsilon)$ gives $ \nabla(-\log|z|_\varepsilon) = -\frac{z}{|z|^2+\varepsilon}$.
It follows that
$$
\bigl|\nabla(-\log|z|_\varepsilon)\bigr| = \frac{|z|}{|z|^2+\varepsilon} \le \frac{1}{|z|}.
$$
Differentiating once more gives \eqref{eq:full-pair-derivatives}.
Since $z\otimes z\ge0$, we have
$$
\nabla^2(-\log|z|_\varepsilon) \ge -\frac{I}{|z|^2+\varepsilon} \ge -\frac{I}{|z|^2}, 
$$
and the Cauchy--Schwarz inequality gives $z\otimes z\le |z|^2I$, so
$$
\begin{aligned}
\nabla^2(-\log|z|_\varepsilon) &\le -\frac{I}{|z|^2+\varepsilon} +\frac{2|z|^2I}{(|z|^2+\varepsilon)^2} = \frac{|z|^2-\varepsilon}{(|z|^2+\varepsilon)^2}\,I  \le \frac{I}{|z|^2+\varepsilon}
\le \frac{I}{|z|^2}.
\end{aligned}
$$
\end{proof}

\begin{lemma}[Many-particle Hardy inequality]
\label{lem:hardy}
Let $d\ge3$, $N\ge2$, and $h\in C_c^\infty(\mathbb R^{dN})$. For every
$1\le i<j\le N$, one has
\begin{equation}
  (d-2)^2\int_{\mathbb{R}^{dN}}
  \frac{|h|^2}{|y^i-y^j|^2} d y
  \le \int_{\mathbb{R}^{dN}}| (\nabla_i-\nabla_j)h|^2 d y,
  \label{eq:hardy-pair}
\end{equation}
and
\begin{equation}
  \frac{(d-2)^2}{N}\ipair
  \int_{\mathbb{R}^{dN}}\frac{|h|^2}{|y^i-y^j|^2} d y
  \le \int_{\mathbb{R}^{dN}}|\nabla h|^2 d y.
  \label{eq:hardy-many}
\end{equation}
\end{lemma}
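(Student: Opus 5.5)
We prove \eqref{eq:hardy-pair} by a change of variables that reduces it to the classical $d$-dimensional Hardy inequality. Then we obtain \eqref{eq:hardy-many} by summing over pairs and using an algebraic identity for $\sum_{i<j}|(\nabla_i-\nabla_j)h|^2$.

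\textbf{Step 1 (pair inequality).} Fix $i<j$ and introduce the linear coordinates $u=(y^i-y^j)/\sqrt2$ and $v=(y^i+y^j)/\sqrt2$, keeping the other $y^k$ fixed. This map is orthogonal, so Lebesgue measure is preserved. In these coordinates $\nabla_u=(\nabla_i-\nabla_j)/\sqrt2$ and $|y^i-y^j|^2=2|u|^2$. For each fixed $v$ and fixed $y^k$, $k\ne i,j$, the classical Hardy inequality in $\mathbb R^d$ ($d\ge3$) reads
\[
\frac{(d-2)^2}{4}\int_{\mathbb R^d}\frac{|h|^2}{|u|^2}\,du\le\int_{\mathbb R^d}|\nabla_u h|^2\,du .
\]
Substituting $|u|^2=|y^i-y^j|^2/2$ and $|\nabla_u h|^2=|(\nabla_i-\nabla_j)h|^2/2$ gives
\[
\frac{(d-2)^2}{2}\int\frac{|h|^2}{|y^i-y^j|^2}\le\frac12\int|(\nabla_i-\nabla_j)h|^2 .
\]
This is exactly \eqref{eq:hardy-pair}. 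Integrating over the remaining variables finishes this step. The classical one-particle Hardy inequality is standard, for instance via $\int|\nabla h+\tfrac{d-2}{2}\tfrac{u}{|u|^2}h|^2\ge0$ followed by integration by parts.

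\textbf{Step 2 (summation).} Sum \eqref{eq:hardy-pair} over $i<j$. We need the pointwise identity
\[
\sum_{i<j}|(\nabla_i-\nabla_j)h|^2=N\sum_i|\nabla_ih|^2-\Bigl|\sum_i\nabla_ih\Bigr|^2\le N|\nabla h|^2 .
\]
Its first equality follows by expanding the squares, since each $|\nabla_i h|^2$ appears $N-1$ times and each cross term $-2\nabla_ih\cdot\nabla_jh$ appears once. Combining the identity with the summed pair inequalities and dividing by $N$ yields \eqref{eq:hardy-many}.

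We do not expect a real obstacle here. The only points needing care are the constant bookkeeping under the $\sqrt2$ rescaling in Step 1 and noting that the discarded term $|\sum_i\nabla_ih|^2$ (the center-of-mass gradient) is nonnegative, which is where the factor $1/N$ in \eqref{eq:hardy-many} comes from.
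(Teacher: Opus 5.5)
Your proposal is correct and follows essentially the same route as the paper. The paper reduces \eqref{eq:hardy-pair} to the classical Hardy inequality in $z=y^i-y^j$, with $(y^i+y^j)/2$ held fixed and $\nabla_i-\nabla_j=2\nabla_z$, where you use the orthogonal coordinates $u=(y^i-y^j)/\sqrt2$ instead. It then sums over pairs using the same identity $\sum_{i<j}|v_i-v_j|^2=N\sum_i|v_i|^2-|\sum_i v_i|^2$.
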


As discussed in the introduction, the refinement in \cite{HHLT} improves this constant for some values of $d$ and $N$.

\begin{proof}
To obtain the pair estimate \eqref{eq:hardy-pair}, fix $1\le i<j\le N$, write $ \nabla_i:=\nabla_{y^i}$, and   $\nabla_j:=\nabla_{y^j}$, and keep $(y^i+y^j)/2$ and all remaining variables fixed.  Then with $z=y^i-y^j$, one has $ \nabla_i-\nabla_j=2\nabla_z$.
Applying the classical Hardy inequality in $z$ and integrating in the fixed variables
gives \eqref{eq:hardy-pair}.

Regarding the many-particle Hardy inequality \eqref{eq:hardy-many}, we note that for any $v_1,\ldots,v_N\in\mathbb{R}^d$ the elementary identity
\begin{align*}
  \sum_{i<j}|v_i-v_j|^2   &=   \frac12\sum_{i=1}^N\sum_{j=1}^N|v_i-v_j|^2\\
 &=  \frac12\sum_{i=1}^N\sum_{j=1}^N  \left(   |v_i|^2+|v_j|^2 -2(v_i\cdot v_j)  \right)\\
  &= N\sum_{i=1}^N|v_i|^2  -\left|\sum_{i=1}^Nv_i\right|^2
\end{align*}
implies
$$
  \sum_{i<j}|v_i-v_j|^2
  \le N\sum_{i=1}^N|v_i|^2.
$$
We apply this inequality pointwise with $v_i=\nabla_i h(y) \in \mathbb{R}^d$. Since $  |\nabla h(y)|^2  =\sum_{i=1}^N|\nabla_i h(y)|^2$,
we obtain
$$
  \sum_{i<j}|(\nabla_i-\nabla_j)h(y)|^2
  \le N|\nabla h(y)|^2.
$$
Now, summing \eqref{eq:hardy-pair} over all pairs $i<j$ and using the previous
pointwise estimate gives
\begin{align*}
  (d-2)^2\ipair
  \int_{\mathbb{R}^{dN}}
  \frac{|h|^2}{|y^i-y^j|^2}\,dy
  &\le
  \int_{\mathbb{R}^{dN}}
  \sum_{i<j}|(\nabla_i-\nabla_j)h|^2\,dy\\
  &\le
  N\int_{\mathbb{R}^{dN}}|\nabla h|^2\,dy.
\end{align*}
This proves \eqref{eq:hardy-many}.
\end{proof}

\begin{lemma}[Gaussian mass and second moment]
\label{lem:psi-mass}
Let $0<\nu<d$. There is a constant $C$, depending only on $d,N,\nu$,
such that, for every $s>0$, $\varepsilon\ge0$, and
$\xi\in\mathbb R^{dN}$,
\begin{equation}
 0<\br{\Gamma_{s,\xi}}{\psi_\varepsilon}
 \le\br{\Gamma_{s,\xi}
            (1+|\,\cdot-\xi|^2/s)}{\psi_\varepsilon}
 \le C s^{-\nu(N-1)/4}.
 \label{eq:psi-mass}
\end{equation}
Thus, the measure with density
$\Gamma_{s,\xi}\psi_\varepsilon$ has finite positive mass.
\end{lemma}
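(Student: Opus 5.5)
Positivity is immediate, since $\Gamma_{s,\xi}\psi_\varepsilon>0$ everywhere, and the middle inequality is trivial. It remains to bound $\br{\Gamma_{s,\xi}(1+|\cdot-\xi|^2/s)}{\psi_\varepsilon}$ from above, uniformly in $\xi$ and $\varepsilon$.

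\textbf{Reduction to $\varepsilon=0$ and scaling.} Since $|z|_\varepsilon\ge|z|$, we have $\psi_\varepsilon\le\psi$, so it suffices to treat $\varepsilon=0$. Change variables $x=\xi+\sqrt s\,y$. Then $\Gamma_{s,\xi}(x)\,dx=\Gamma_1(y)\,dy$, and
\[
\psi(\xi+\sqrt s\,y)=s^{-\nu(N-1)/4}\prod_{i<j}\bigl|y^i-y^j+\eta^{ij}\bigr|^{-\nu/N},\qquad \eta^{ij}:=(\xi^i-\xi^j)/\sqrt s .
\]
The exponent comes from $\binom N2\cdot\frac{\nu}{N}\cdot\frac12=\nu(N-1)/4$. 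So we must show that
\[
\int_{\mathbb R^{dN}}\Gamma_1(y)(1+|y|^2)\prod_{i<j}|y^i-y^j+\eta^{ij}|^{-\nu/N}\,dy
\]
is bounded by a constant independent of the shift $\xi$.

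\textbf{Hölder across pairs.} There are $P=N(N-1)/2$ factors. Apply the generalized Hölder inequality with respect to the finite measure $\Gamma_1(1+|y|^2)\,dy$, using exponent $P$ on each factor. This bounds the integral by
\[
\prod_{i<j}\Bigl(\int\Gamma_1(1+|y|^2)\,|y^i-y^j+\eta^{ij}|^{-\nu(N-1)/2}\,dy\Bigr)^{1/P}.
\]
Each inner integral depends only on the single difference $z=y^i-y^j$, whose marginal under the Gaussian (with the polynomial factor) is bounded by $C\,\mathrm{Gaussian}(z)\,dz$ in $\mathbb R^d$. The singularity $|z+\eta|^{-\alpha}$ with $\alpha=\nu(N-1)/2$ is locally integrable in $\mathbb R^d$ only if $\alpha<d$. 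This fails for large $N$, so naive Hölder only works in the regime $\nu(N-1)/2<d$.

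\textbf{Main obstacle and fix.} The real difficulty is integrability near multi-particle collisions, with a constant allowed to depend on $N$. Since the constant may depend on $N$, a cleaner approach is the following. For a bounded, continuous, centered Gaussian density $g$ on $\mathbb R^m$ and $\mathrm{sup}$ over shifts, it suffices to show that $\prod_{i<j}|y^i-y^j+\eta^{ij}|^{-\nu/N}$ is locally integrable uniformly in shifts, together with a polynomial bound at infinity.

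To handle the local part, decompose $\mathbb R^{dN}$ according to the clustering structure and use the standard criterion for products of pair singularities. Near a point where a cluster $S$ of $k$ particles collides, the product behaves like $r^{-\frac{\nu}{N}\binom k2}$ in a space of codimension $d(k-1)$. Integrability requires $\frac{\nu}{N}\binom k2<d(k-1)$, that is, $\nu k/(2N)<d$, which holds for all $k\le N$ since $\nu<d$. This is exactly the hypothesis $\nu<d$.

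Concretely, I would prove this by induction on $N$: integrate out one particle at a time. In a Jacobi-type chain, bound $\prod_{j}|y^N-y^j+\eta|^{-\nu/N}$ by $\sum_j|y^N-w_j|^{-\nu(N-1)/N}$ via AM–GM on the nearest particle. Since $\nu(N-1)/N<d$, the $y^N$-integral against a Gaussian is bounded uniformly in the $w_j$.

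Uniformity in the shifts $\eta^{ij}$ is preserved because a Gaussian convolved with $|\cdot|^{-\alpha}$, $\alpha<d$, is bounded. For the moment factor $1+|y|^2$, absorb it by replacing $\Gamma_1$ with $C\,\Gamma_2$.
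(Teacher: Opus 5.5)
Your concrete argument — reduce to $\varepsilon=0$ via $\psi_\varepsilon\le\psi$, rescale, integrate out $y^N,y^{N-1},\dots,y^2$ one at a time against the factorized Gaussian, bound the $m-1$ singular factors involving $y^m$ by a sum of $(m-1)$-th powers so that only $\nu(m-1)/N<d$ is needed (the paper uses H\"older here, with the same effect), and absorb $1+|y|^2$ by passing to $\Gamma_2$ — is essentially the paper's proof. The H\"older-across-pairs attempt and the cluster heuristic are unnecessary; note also that the cluster criterion gives $\nu<2d$, not ``exactly'' $\nu<d$, and that ``induction on $N$'' should be read as successive integration with the pair exponent $\nu/N$ held fixed, not as an induction whose hypothesis uses exponent $\nu/(N-1)$.
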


\begin{proof}
We first take $s=1$. Since $\psi_\varepsilon\le\psi$ almost
everywhere, it is enough to prove the upper bounds for $\varepsilon=0$.
For $0<p<d$, uniformly in $a,z\in\mathbb R^d$,
\[
 \int_{\mathbb R^d}(4\pi)^{-d/2}e^{-|u-a|^2/4}
                  |u-z|^{-p}\,du\le C_p.
\]
Indeed, on $|u-z|<1$ we bound the Gaussian by its supremum and
obtain a uniform bound from local integrability of $|u-z|^{-p}$.
On the complement, $|u-z|^{-p}\le1$, and the Gaussian has integral one.

Fix $2\le m\le N$ and the variables $x^1,\ldots,x^{m-1}$.
We will apply H\"older's inequality $$
  \int\prod_{i=1}^{m-1} g_i
  \le\prod_{i=1}^{m-1}\left(\int g_i^{m-1}\right)^{\frac{1}{m-1}}
$$
for the Gaussian
probability measure in $x^m$:
\[
 \int_{\mathbb R^d}(4\pi)^{-d/2}e^{-|x^m-\xi^m|^2/4}
       \prod_{i<m}|x^m-x^i|^{-\nu/N}\,dx^m\le C_m,
\]
where we have used $\nu(m-1)/N<d$. Now, successive integration in
$x^N,x^{N-1},\ldots,x^2$ proves the mass bound, uniformly in $\xi$.
In turn,
\[
 (1+|x-\xi|^2)\Gamma_{1,\xi}(x)\le C\Gamma_{2,\xi}(x)
\]
and the same argument prove the second-moment bound at unit scale $s=1$.

Finally, with $y=\sqrt{s}\,x$,
\[
 \Gamma_{s,\xi}(\sqrt{s}\,x)s^{dN/2}
   =\Gamma_{1,\xi/\sqrt{s}}(x),\qquad
 \psi_\varepsilon(\sqrt{s}\,x)
   =s^{-\nu(N-1)/4}\psi_{\varepsilon/s}(x).
\]
Therefore, the previous estimates for $s=1$ give \eqref{eq:psi-mass}.
\end{proof}

\begin{lemma}
\label{prop:logpsi}
Let $\varepsilon>0$. For every smooth real-valued function $f$ and every $x$
such that $x^i\ne x^j$ whenever $i\ne j$, we have
\begin{equation}\label{eq:full-hessian}
\begin{aligned}
(\nabla^2\log\psi_\varepsilon\,\nabla f)\cdot\nabla f &\le \frac{\nu}{N} \sum_{1\le i<j\le N} \frac{|\nabla_i f-\nabla_j f|^2}{|x^i-x^j|^2}\\
&\le \frac{2\nu}{N} \sum_{i=1}^N\sum_{\substack{j=1\\j\ne i}}^N \frac{|\nabla_i f|^2}{|x^i-x^j|^2}.
\end{aligned}
\end{equation}
Moreover,
\begin{equation}\label{eq:logpsi-laplacian}
-\Delta\log\psi_\varepsilon \le \frac{2d\nu}{N} \sum_{1\le i<j\le N}\frac{1}{|x^i-x^j|^2},
\end{equation}
and, for every $i\ne j$,
\begin{equation} \label{eq:logpsi-directional}
\begin{aligned}
&-(\nabla_i-\nabla_j)\cdot(\nabla_i-\nabla_j) \log\psi_\varepsilon\\
&\qquad\le \frac{d\nu}{N} \left[ \frac{4}{|x^i-x^j|^2} +\sum_{\substack{k=1\\k\ne i,j}}^N
\left( \frac{1}{|x^i-x^k|^2} +\frac{1}{|x^j-x^k|^2} \right) \right].
\end{aligned}
\end{equation}
Here, recall, $\nabla_i$ is the gradient with respect to $x^i \in \mathbb R^d$,
while $\nabla$ and $\Delta$ act on all variables.
\end{lemma}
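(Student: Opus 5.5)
The plan is to reduce everything to the pairwise structure of $\log\psi_\varepsilon$ and then apply Lemma~\ref{lem:psi-interaction} to each pair. Write
\[
 \log\psi_\varepsilon(x)=\frac{\nu}{N}\sum_{i<j}\phi(x^i-x^j),\qquad \phi(z):=-\log|z|_\varepsilon .
\]
For a fixed pair $(i,j)$, the function $x\mapsto\phi(x^i-x^j)$ depends only on $z=x^i-x^j$. Its Hessian on $\mathbb R^{dN}$ therefore acts on a vector $v=(v^1,\dots,v^N)$ through the combination $v^i-v^j$:
\[
 (\nabla^2[\phi(x^i-x^j)]v)\cdot v=(\nabla^2\phi(z)(v^i-v^j))\cdot(v^i-v^j).
\]
Take $v=\nabla f$ and use the upper bound $\nabla^2\phi\le I/|z|^2$ from \eqref{eq:full-pair-hessian}. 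Summing over pairs with the factor $\nu/N$ gives the first inequality in \eqref{eq:full-hessian}.

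For the second inequality, use $|\nabla_if-\nabla_jf|^2\le 2|\nabla_if|^2+2|\nabla_jf|^2$. Rewriting $\sum_{i<j}\frac{|\nabla_if|^2+|\nabla_jf|^2}{|x^i-x^j|^2}$ as the sum over ordered pairs $\sum_{i\ne j}\frac{|\nabla_if|^2}{|x^i-x^j|^2}$ gives the factor $2\nu/N$. This step is elementary.

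For \eqref{eq:logpsi-laplacian}, take traces pairwise. The Laplacian on $\mathbb R^{dN}$ of $\phi(x^i-x^j)$ equals $2\Delta_z\phi(z)$, because $\Delta_i+\Delta_j$ applied to a function of $x^i-x^j$ gives twice $\Delta_z$. The lower bound $\nabla^2\phi\ge -I/|z|^2$ implies $-\Delta_z\phi\le d/|z|^2$. (One could instead compute directly: $-\Delta_z\phi=(d-2)|z|^2/|z|_\varepsilon^4+d\varepsilon/|z|_\varepsilon^4\le d/|z|^2$.) Multiplying by $2\nu/N$ and summing over pairs gives \eqref{eq:logpsi-laplacian}.

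For \eqref{eq:logpsi-directional}, the operator $D=\nabla_i-\nabla_j$ is applied, component by component, to each pair term $\phi(x^k-x^l)$, and the bound again uses $-\nabla^2\phi\le I/|z|^2$ in traced form; one sorts the pairs into three cases. If $\{k,l\}=\{i,j\}$, then $D$ acts on a function of $z=x^i-x^j$ as $2\nabla_z$, so $-D\cdot D\,\phi=-4\Delta_z\phi\le 4d/|z|^2$. If exactly one of $k,l$ lies in $\{i,j\}$, say $k=i$ and $l\notin\{i,j\}$, then $D$ acts as $\nabla_z$ with $z=x^i-x^l$, giving the contribution $-\Delta_z\phi\le d/|x^i-x^l|^2$; similarly a pair $\{j,l\}$ contributes $d/|x^j-x^l|^2$. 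If the pair is disjoint from $\{i,j\}$, the contribution is zero. Multiplying by $\nu/N$ gives exactly \eqref{eq:logpsi-directional}.

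There is no real obstacle. The only point that needs care is the bookkeeping of the chain-rule factors: the factor $2$ in the full Laplacian and the factor $4$ in the $(i,j)$ directional term. These are where the stated constants come from, so they must be counted correctly.
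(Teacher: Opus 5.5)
Your proposal is correct and follows essentially the same route as the paper. Both write $\log\psi_\varepsilon$ as a sum of pair terms in $z=x^i-x^j$, use the upper Hessian bound of Lemma~\ref{lem:psi-interaction} for \eqref{eq:full-hessian}, use its traced lower bound for \eqref{eq:logpsi-laplacian} and \eqref{eq:logpsi-directional} (with the chain-rule factors $2$ and $4$), and handle the directional estimate with the same three-case sorting of pairs.
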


\begin{proof}
We have $ \log\psi_\varepsilon =-\frac{\nu}{N} \sum_{1\le i<j\le N}\log|x^i-x^j|_\varepsilon$.
Differentiating the pair interactions gives 
$$
\begin{aligned}
&(\nabla^2\log\psi_\varepsilon\,\nabla f)\cdot\nabla f\\
&\quad= \frac{\nu}{N}\sum_{1\le i<j\le N} (\nabla_i f-\nabla_j f)\cdot \left[
\left.\nabla_z^2(-\log|z|_\varepsilon) \right|_{z=x^i-x^j} (\nabla_i f-\nabla_j f) \right].
\end{aligned}
$$
Applying the upper bound in  \eqref{eq:full-pair-hessian} yields
$$
(\nabla^2\log\psi_\varepsilon\,\nabla f)\cdot\nabla f \le \frac{\nu}{N} \sum_{1\le i<j\le N} \frac{|\nabla_i f-\nabla_j f|^2}{|x^i-x^j|^2}.
$$
Then, using $ |\nabla_i f-\nabla_j f|^2 \le 2|\nabla_i f|^2+2|\nabla_j f|^2 $
and
$$
\sum_{1\le i<j\le N} \frac{|\nabla_i f|^2+|\nabla_j f|^2}{|x^i-x^j|^2} = \sum_{i=1}^N\sum_{\substack{j=1\\j\ne i}}^N
\frac{|\nabla_i f|^2}{|x^i-x^j|^2},
$$
we obtain the second inequality in \eqref{eq:full-hessian}.

Taking the trace in the lower bound in \eqref{eq:full-pair-hessian} gives
$$
\nabla_i\cdot\nabla_i\log|x^i-x^j|_\varepsilon =\nabla_j\cdot\nabla_j \log|x^i-x^j|_\varepsilon \le \frac{d}{|x^i-x^j|^2}.
$$
Therefore,
$$
\begin{aligned}
-\Delta\log\psi_\varepsilon &=\frac{2\nu}{N} \sum_{1\le i<j\le N} \nabla_i\cdot\nabla_i\log|x^i-x^j|_\varepsilon\\
&\le \frac{2d\nu}{N} \sum_{1\le i<j\le N}\frac{1}{|x^i-x^j|^2}.
\end{aligned}
$$
This proves \eqref{eq:logpsi-laplacian}.

Finally, fix $i\ne j$. It suffices to consider the pair $(i,j)$ and, for each $k\ne i,j$, the pairs $(i,k)$ and $(j,k)$, since all other terms are independent of $x^i$ and $x^j$ and therefore vanish after differentiation.

For the pair $(i,j)$, we have $ \nabla_j\log|x^i-x^j|_\varepsilon =-\nabla_i\log|x^i-x^j|_\varepsilon$.
Differentiating,  
$$
\begin{aligned}
\nabla_i\cdot\nabla_j\log|x^i-x^j|_\varepsilon &=-\nabla_i\cdot\nabla_i \log|x^i-x^j|_\varepsilon,\\
\nabla_j\cdot\nabla_j\log|x^i-x^j|_\varepsilon
&=\nabla_i\cdot\nabla_i \log|x^i-x^j|_\varepsilon.
\end{aligned}
$$
Therefore,
$$
\begin{aligned}
&(\nabla_i-\nabla_j)\cdot(\nabla_i-\nabla_j) \log|x^i-x^j|_\varepsilon\\
&\quad= \bigl( \nabla_i\cdot\nabla_i -2\nabla_i\cdot\nabla_j +\nabla_j\cdot\nabla_j \bigr)\log|x^i-x^j|_\varepsilon\\
&\quad= 4\nabla_i\cdot\nabla_i\log|x^i-x^j|_\varepsilon\\
&\quad\le \frac{4d}{|x^i-x^j|^2}.
\end{aligned}
$$
Now let $k\ne i,j$. Since $ \nabla_j\log|x^i-x^k|_\varepsilon=0 $, and  $\nabla_i\log|x^j-x^k|_\varepsilon=0$, we have
$$
\begin{aligned}
&(\nabla_i-\nabla_j)\cdot(\nabla_i-\nabla_j) \log|x^i-x^k|_\varepsilon\\
&\qquad= \nabla_i\cdot\nabla_i\log|x^i-x^k|_\varepsilon \le \frac{d}{|x^i-x^k|^2},
\end{aligned}
$$
and
$$
\begin{aligned}
&(\nabla_i-\nabla_j)\cdot(\nabla_i-\nabla_j) \log|x^j-x^k|_\varepsilon\\
&\qquad= \nabla_j\cdot\nabla_j\log|x^j-x^k|_\varepsilon \le \frac{d}{|x^j-x^k|^2}.
\end{aligned}
$$
Summing up these inequalities, we obtain 
$$
\begin{aligned}
&-(\nabla_i-\nabla_j)\cdot(\nabla_i-\nabla_j) \log\psi_\varepsilon\\
&\qquad\le \frac{d\nu}{N} \left[ \frac{4}{|x^i-x^j|^2} +\sum_{\substack{k=1\\k\ne i,j}}^N
\left( \frac{1}{|x^i-x^k|^2} +\frac{1}{|x^j-x^k|^2} \right) \right].
\end{aligned}
$$
This proves \eqref{eq:logpsi-directional}.
\end{proof}

\begin{lemma} \label{lem:ground-state}
Let $\varepsilon>0$, $\xi\in\mathbb R^{dN}$, and let $h\in C_c^\infty(\mathbb{R}^{dN})$ be real-valued. Then
\begin{equation}
\begin{aligned}
  \int_{\mathbb{R}^{dN}}
  \left|\nabla\!\left(h\sqrt{\psi_\varepsilon\Gamma_{1,\xi}}\right)\right|^2 d y
  &=\br{\Gamma_{1,\xi}|\nabla h|^2}{\psi_\varepsilon}
    -\frac12\br{
      \Gamma_{1,\xi}h^2\Delta\log(\psi_\varepsilon\Gamma_{1,\xi})
    }{\psi_\varepsilon}\\
  &\quad-\frac14\br{
      \Gamma_{1,\xi}h^2\left|\nabla\log(\psi_\varepsilon\Gamma_{1,\xi})\right|^2
    }{\psi_\varepsilon}.
\end{aligned}
  \label{eq:ground-state}
\end{equation}
Moreover, for every $i\ne j$,
\begin{equation}
\begin{aligned}
  &\int_{\mathbb{R}^{dN}}
    \left|(\nabla_i-\nabla_j)
    \left(h\sqrt{\psi_\varepsilon\Gamma_{1,\xi}}\right)\right|^2 d y\\
  &=\br{\Gamma_{1,\xi}|(\nabla_i-\nabla_j)h|^2}{\psi_\varepsilon}
   -\frac12\br{
     \Gamma_{1,\xi}h^2(\nabla_i-\nabla_j)\cdot(\nabla_i-\nabla_j)
     \log(\psi_\varepsilon\Gamma_{1,\xi})}{\psi_\varepsilon}\\
  &\quad-
  \frac14\br{
    \Gamma_{1,\xi}h^2\left|(\nabla_i-\nabla_j)
    \log(\psi_\varepsilon\Gamma_{1,\xi})\right|^2}{\psi_\varepsilon}.
\end{aligned}
  \label{eq:ground-state-pair}
\end{equation}
\end{lemma}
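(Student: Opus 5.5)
This is the standard ground-state substitution. Write $w:=\psi_\varepsilon\Gamma_{1,\xi}$, which is smooth and strictly positive because $\varepsilon>0$, and set $V:=\log w$, so that $\sqrt w=e^{V/2}$ and $\nabla\sqrt w=\tfrac12\sqrt w\,\nabla V$. Let $D$ stand for either the full gradient $\nabla$ or the pair operator $\nabla_i-\nabla_j$. Both are first-order operators with constant coefficients that obey the Leibniz rule, and each has formal adjoint $-D\cdot$. This lets us prove \eqref{eq:ground-state} and \eqref{eq:ground-state-pair} by one computation.

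\emph{Expansion.} Pointwise,
\[
D(h\sqrt w)=\sqrt w\bigl(Dh+\tfrac12 h\,DV\bigr),
\]
and therefore
\[
|D(h\sqrt w)|^2=w|Dh|^2+w\,h\,Dh\cdot DV+\tfrac14 w h^2|DV|^2 .
\]
The first term integrates to $\br{\Gamma_{1,\xi}|Dh|^2}{\psi_\varepsilon}$.

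\emph{Integration by parts.} In the cross term, use $h\,Dh=\tfrac12 D(h^2)$ and $w\,DV=Dw$. This gives
\[
\int w\,h\,Dh\cdot DV\,dy=\tfrac12\int D(h^2)\cdot Dw\,dy=-\tfrac12\int h^2\,D\cdot Dw\,dy .
\]
There is no boundary term because $h\in C_c^\infty$. Next, $D\cdot Dw=w\bigl(D\cdot DV+|DV|^2\bigr)$, since $Dw=wDV$. So the cross term equals
\[
-\tfrac12\int w h^2\,D\cdot DV\,dy-\tfrac12\int wh^2|DV|^2\,dy .
\]

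\emph{Collecting terms.} Adding the $\tfrac14$-term gives total coefficient $-\tfrac12+\tfrac14=-\tfrac14$ in front of $\int wh^2|DV|^2$. The result is exactly \eqref{eq:ground-state} when $D=\nabla$, where $D\cdot D=\Delta$, and exactly \eqref{eq:ground-state-pair} when $D=\nabla_i-\nabla_j$.

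The only point needing care is justification. Since $\varepsilon>0$, $\psi_\varepsilon$ is $C^\infty$ and positive everywhere, including at collisions, so $V$ is smooth. The integrands are smooth and compactly supported, and no approximation or singular-set analysis is needed.
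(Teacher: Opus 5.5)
Your proof is correct and is essentially the paper's argument. You expand $|\sqrt{w}\,(Dh+\tfrac12 h\,DV)|^2$, integrate the cross term by parts as $\tfrac12\int D(h^2)\cdot Dw$, and use $D\cdot Dw=w\,(D\cdot DV+|DV|^2)$; the paper does the same, except that it treats $\nabla$ and $\nabla_i-\nabla_j$ in two consecutive passes rather than through one generic constant-coefficient operator $D$.
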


\begin{proof}
We have
\[
  \nabla\!\left(h\sqrt{\psi_\varepsilon\Gamma_{1,\xi}}\right)
  =\sqrt{\psi_\varepsilon\Gamma_{1,\xi}}
  \left(\nabla h+\frac h2\nabla\log(\psi_\varepsilon\Gamma_{1,\xi})\right).
\]
Squaring this identity and integrating over $\mathbb{R}^{dN}$ gives
\begin{align*}
  \int_{\mathbb{R}^{dN}}
  \left|\nabla\!\left(h\sqrt{\psi_\varepsilon\Gamma_{1,\xi}}\right)\right|^2 d y
  &=\br{\Gamma_{1,\xi}|\nabla h|^2}{\psi_\varepsilon}
  +\int_{\mathbb{R}^{dN}}\psi_\varepsilon\Gamma_{1,\xi}h\nabla h\cdot
    \nabla\log(\psi_\varepsilon\Gamma_{1,\xi}) d y\\
  &\quad+\frac14\br{
    \Gamma_{1,\xi}h^2\left|\nabla\log(\psi_\varepsilon\Gamma_{1,\xi})\right|^2
  }{\psi_\varepsilon}.
\end{align*}
The mixed term is handled by integration by parts.  Since $h$ is compactly
supported, we have
\begin{equation*}
  \int_{\mathbb{R}^{dN}}\psi_\varepsilon\Gamma_{1,\xi}h\nabla h\cdot
  \nabla\log(\psi_\varepsilon\Gamma_{1,\xi}) dy =\frac12\int_{\mathbb{R}^{dN}}\nabla(h^2)\cdot
  \nabla(\psi_\varepsilon\Gamma_{1,\xi}) dy
  =-\frac12\int_{\mathbb{R}^{dN}}h^2
  \Delta(\psi_\varepsilon\Gamma_{1,\xi}) dy
\end{equation*}
and
\[
  \frac{\Delta(\psi_\varepsilon\Gamma_{1,\xi})}
  {\psi_\varepsilon\Gamma_{1,\xi}}
  =\Delta\log(\psi_\varepsilon\Gamma_{1,\xi})
  +\left|\nabla\log(\psi_\varepsilon\Gamma_{1,\xi})\right|^2.
\]
This yields \eqref{eq:ground-state}.

We now prove the pair identity \eqref{eq:ground-state-pair}.  The same product and chain rules yield
$$
\begin{aligned}
  &(\nabla_i-\nabla_j)
  \left(h\sqrt{\psi_\varepsilon\Gamma_{1,\xi}}\right) =\sqrt{\psi_\varepsilon\Gamma_{1,\xi}}
  \left((\nabla_i-\nabla_j)h
  +\frac h2(\nabla_i-\nabla_j)
  \log(\psi_\varepsilon\Gamma_{1,\xi})\right).
\end{aligned}
$$
After squaring and integrating, the corresponding mixed term satisfies
$$
\begin{aligned}
  &\int_{\mathbb{R}^{dN}}\psi_\varepsilon\Gamma_{1,\xi}h
  (\nabla_i-\nabla_j)h\cdot(\nabla_i-\nabla_j)
  \log(\psi_\varepsilon\Gamma_{1,\xi}) d y\\
  &\quad= \frac12\int_{\mathbb{R}^{dN}}(\nabla_i-\nabla_j)(h^2)\cdot
  (\nabla_i-\nabla_j)(\psi_\varepsilon\Gamma_{1,\xi}) d y\\
  &\quad=-\frac12\int_{\mathbb{R}^{dN}}h^2
  (\nabla_i-\nabla_j)\cdot(\nabla_i-\nabla_j)
  (\psi_\varepsilon\Gamma_{1,\xi}) d y.
\end{aligned}
$$
Finally,
$$
\begin{aligned}
  &\frac{(\nabla_i-\nabla_j)\cdot(\nabla_i-\nabla_j)
  (\psi_\varepsilon\Gamma_{1,\xi})}{\psi_\varepsilon\Gamma_{1,\xi}} =(\nabla_i-\nabla_j)\cdot(\nabla_i-\nabla_j)
  \log(\psi_\varepsilon\Gamma_{1,\xi})
  +\left|(\nabla_i-\nabla_j)
  \log(\psi_\varepsilon\Gamma_{1,\xi})\right|^2.
\end{aligned}
$$
Substituting this into the expanded square proves
\eqref{eq:ground-state-pair}.
\end{proof}

\bigskip

\section{Proof of Theorem~\ref{thm:full-gap}}
\label{sec:gap-proof}

\subsection{Many-particle weighted Hardy-type inequalities}

\begin{proposition} \label{prop:first-singular}
For $\varepsilon>0$, $\xi\in\mathbb R^{dN}$, and $f\in C_c^\infty(\mathbb{R}^{dN})$,
\begin{equation}
  \bigl((d-2)^2-d\nu\bigr)\ipair
  \br{\Gamma_{1,\xi}\dfrac{|\nabla f|^2}{|y^i-y^j|^2}}{\psi_\varepsilon}
  \le N\br{\Gamma_{1,\xi}|\nabla^2f|^2}{\psi_\varepsilon}
  +\frac{dN^2}{4}\br{\Gamma_{1,\xi}|\nabla f|^2}{\psi_\varepsilon}.
  \label{eq:first-singular}
\end{equation}
\end{proposition}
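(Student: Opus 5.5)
The idea is to apply the pair Hardy inequality \eqref{eq:hardy-pair} to each first derivative of $f$, multiplied by the ground state $\sqrt{\psi_\varepsilon\Gamma_{1,\xi}}$. Expanding via the pair ground-state identity \eqref{eq:ground-state-pair} produces singular error terms from $\log\psi_\varepsilon$ that are again of Hardy type, and these are absorbed on the left. Concretely, fix a scalar coordinate index $\alpha\in\{1,\dots,dN\}$ and put $g:=\partial_\alpha f\in C_c^\infty$. Since $\varepsilon>0$, the weight $\psi_\varepsilon\Gamma_{1,\xi}$ is smooth and positive, so $h:=g\sqrt{\psi_\varepsilon\Gamma_{1,\xi}}\in C_c^\infty(\mathbb R^{dN})$ is admissible in Lemma~\ref{lem:hardy}. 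All integrals $\br{\Gamma_{1,\xi}g^2|y^i-y^j|^{-2}}{\psi_\varepsilon}$ are finite because $d\ge3$, which will justify the absorption step below.

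\textbf{Step 1 (one pair).} For fixed $i<j$, combine \eqref{eq:hardy-pair} with \eqref{eq:ground-state-pair} and drop the nonpositive term $-\tfrac14\br{\cdots}{}$. This gives
\[
(d-2)^2\br{\Gamma_{1,\xi}\tfrac{g^2}{|y^i-y^j|^2}}{\psi_\varepsilon}
\le \br{\Gamma_{1,\xi}|(\nabla_i-\nabla_j)g|^2}{\psi_\varepsilon}
-\tfrac12\br{\Gamma_{1,\xi}g^2 D_{ij}\log\psi_\varepsilon}{\psi_\varepsilon}
-\tfrac12\br{\Gamma_{1,\xi}g^2 D_{ij}\log\Gamma_{1,\xi}}{\psi_\varepsilon},
\]
where $D_{ij}:=(\nabla_i-\nabla_j)\cdot(\nabla_i-\nabla_j)$. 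The Gaussian part is explicit. Since $\log\Gamma_{1,\xi}=-|y-\xi|^2/4+\mathrm{const}$, each of $\nabla_i\cdot\nabla_i$ and $\nabla_j\cdot\nabla_j$ contributes $-d/2$, and the mixed term vanishes. Hence $D_{ij}\log\Gamma_{1,\xi}=-d$, and the last term equals $\tfrac d2\br{\Gamma_{1,\xi}g^2}{\psi_\varepsilon}$. The $\psi$-part is bounded using \eqref{eq:logpsi-directional}.

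\textbf{Step 2 (summation over pairs; the main bookkeeping).} Sum over $i<j$. The key point is to count how often each $|y^a-y^b|^{-2}$ appears among the bounds \eqref{eq:logpsi-directional}. It appears with weight $4$ from the pair $\{a,b\}$ itself. It also appears with weight $1$ from each pair $\{a,j\}$ and each pair $\{b,j\}$ with $j\ne a,b$, which gives $2(N-2)$ more. The total weight is $2N$. Therefore
\[
-\tfrac12\sum_{i<j}D_{ij}\log\psi_\varepsilon\le \tfrac12\cdot\tfrac{d\nu}{N}\cdot 2N\sum_{a<b}|y^a-y^b|^{-2}=d\nu\sum_{a<b}|y^a-y^b|^{-2}.
\]
The Gaussian terms sum to $\tfrac d2\binom N2\br{\Gamma_{1,\xi}g^2}{\psi_\varepsilon}\le\tfrac{dN^2}{4}\br{\Gamma_{1,\xi}g^2}{\psi_\varepsilon}$. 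For the gradient terms, apply pointwise the elementary estimate from the proof of Lemma~\ref{lem:hardy}, $\sum_{i<j}|(\nabla_i-\nabla_j)g|^2\le N|\nabla g|^2$. Since all the singular integrals are finite, we may move $d\nu\sum_{i<j}\br{\Gamma_{1,\xi}g^2|y^i-y^j|^{-2}}{\psi_\varepsilon}$ to the left. This yields
\[
\bigl((d-2)^2-d\nu\bigr)\sum_{i<j}\br{\Gamma_{1,\xi}\tfrac{(\partial_\alpha f)^2}{|y^i-y^j|^2}}{\psi_\varepsilon}\le N\br{\Gamma_{1,\xi}|\nabla\partial_\alpha f|^2}{\psi_\varepsilon}+\tfrac{dN^2}{4}\br{\Gamma_{1,\xi}(\partial_\alpha f)^2}{\psi_\varepsilon}.
\]

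\textbf{Step 3.} Sum over $\alpha=1,\dots,dN$. Using $\sum_\alpha(\partial_\alpha f)^2=|\nabla f|^2$ and $\sum_\alpha|\nabla\partial_\alpha f|^2=|\nabla^2f|^2$ gives \eqref{eq:first-singular}. The only delicate step is the combinatorial count in Step 2. It is exactly what produces the $N$-independent coefficient $d\nu$, instead of a coefficient that grows with $N$, in front of the singular term.
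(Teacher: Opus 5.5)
Your argument is correct. It yields \eqref{eq:first-singular}, in fact with the slightly better constant $\frac d2\binom N2=\frac{dN(N-1)}{4}$ in place of $\frac{dN^2}{4}$. It is, however, organized differently from the paper's proof.

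The paper applies the many-particle Hardy inequality \eqref{eq:hardy-many} directly to $(\nabla_m f)_\alpha\sqrt{\psi_\varepsilon\Gamma_{1,\xi}}$ and expands with the full ground-state identity \eqref{eq:ground-state}. Its only interaction input is then the trace bound \eqref{eq:logpsi-laplacian}, which gives the error coefficient $\frac{d\nu}{N}$, and multiplying by $N$ at the end produces $(d-2)^2-d\nu$. No combinatorial counting is needed.

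You instead work pair by pair with \eqref{eq:hardy-pair}, \eqref{eq:ground-state-pair} and the directional bound \eqref{eq:logpsi-directional}. You apply the pointwise inequality $\sum_{i<j}|(\nabla_i-\nabla_j)g|^2\le N|\nabla g|^2$ to $g=\partial_\alpha f$ after the ground-state transform. The paper, by contrast, uses it (inside Lemma~\ref{lem:hardy}) on the transformed function before expanding.

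The two routes agree exactly on the interaction term. Since $\log\psi_\varepsilon$ is translation invariant,
\[
\sum_{i<j}(\nabla_i-\nabla_j)\cdot(\nabla_i-\nabla_j)\log\psi_\varepsilon=N\Delta\log\psi_\varepsilon .
\]
This is why your count $4+2(N-2)=2N$ reproduces precisely $N$ times the paper's coefficient. The Gaussian factor is not translation invariant, and that is where your small gain in the $\langle\Gamma_{1,\xi}|\nabla f|^2\rangle_{\psi_\varepsilon}$ term comes from.

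The paper's route buys brevity. The count you single out as the delicate step is absorbed into the trace of the pair Hessian bound and into Lemma~\ref{lem:hardy}. Your pairwise machinery is essentially what the paper reserves for Proposition~\ref{prop:componentwise}, where the finer pairwise structure is genuinely needed.
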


\begin{proof}

\noindent
1. Fix $1\le m\le N$ and $1\le\alpha\le d$.  By a standard density argument, the compactly supported function
$
 (\nabla_m f)_\alpha   \sqrt{\psi_\varepsilon\Gamma_{1,\xi}}
$
belongs to $W^{1,2}(\mathbb{R}^{dN})$\footnote{Here, $(\nabla_m \cdot)_\alpha=\frac{\partial \cdot}{\partial(y^m)_\alpha}$
denotes differentiation with respect to the $\alpha$-th coordinate
of $y^m$; see the notation section.}, so that the many-particle Hardy inequality \eqref{eq:hardy-many} applies.  We obtain
$$
\begin{aligned}
  &\frac{(d-2)^2}{N}\ipair
  \int_{\mathbb{R}^{dN}}
  \frac{\psi_\varepsilon\Gamma_{1,\xi}}
       {|y^i-y^j|^2}
  \left|(\nabla_m f)_\alpha \right|^2 d y \le
  \int_{\mathbb{R}^{dN}}
  \left| \nabla\!\left(
  (\nabla_m f)_\alpha
  \sqrt{\psi_\varepsilon\Gamma_{1,\xi}}
  \right)\right|^2  d y.
\end{aligned}
$$
We now sum over $m$ and $\alpha$ and apply Lemma \ref{lem:ground-state}, i.e.\,\eqref{eq:ground-state}, to each derivative of $f$.  Since
$$
  \sum_{m=1}^N\sum_{\alpha=1}^d
  \left|(\nabla_m f)_\alpha\right|^2
  =|\nabla f|^2,
  \qquad
  \sum_{m=1}^N\sum_{\alpha=1}^d
  \left|\nabla(\nabla_m f)_\alpha \right|^2
  =|\nabla^2f|^2,
$$
we find
\begin{align*}
  \frac{(d-2)^2}{N}\ipair
  \br{\Gamma_{1,\xi}\frac{|\nabla f|^2}{|y^i-y^j|^2}}{\psi_\varepsilon}
  &\le \br{\Gamma_{1,\xi}|\nabla^2f|^2}{\psi_\varepsilon}
  -\frac12\br{
    \Gamma_{1,\xi}|\nabla f|^2\Delta\log(\psi_\varepsilon\Gamma_{1,\xi})
  }{\psi_\varepsilon}\\
  &\quad-\frac14\br{
    \Gamma_{1,\xi}|\nabla f|^2
    |\nabla\log(\psi_\varepsilon\Gamma_{1,\xi})|^2
  }{\psi_\varepsilon}.
\end{align*}

\smallskip

\noindent
2. Since
\[
  \Delta\log(\psi_\varepsilon\Gamma_{1,\xi})
  =\Delta\log\psi_\varepsilon+\Delta\log\Gamma_{1,\xi}
\]
and
\[
  \log\Gamma_{1,\xi}(y)
  =-\frac{dN}{2}\log(4\pi)-\frac{|y-\xi|^2}{4},
  \qquad
  \Delta\log\Gamma_{1,\xi}=-\frac{dN}{2},
\]
we obtain
\begin{align} \label{equat}
  \frac{(d-2)^2}{N}\ipair
  \br{\Gamma_{1,\xi}\frac{|\nabla f|^2}{|y^i-y^j|^2}}{\psi_\varepsilon}
  &\le \br{\Gamma_{1,\xi}|\nabla^2f|^2}{\psi_\varepsilon}
  -\frac12\br{\Gamma_{1,\xi}|\nabla f|^2\Delta\log\psi_\varepsilon}{\psi_\varepsilon}  \nonumber \\
  &\quad+\frac{dN}{4}\br{\Gamma_{1,\xi}|\nabla f|^2}{\psi_\varepsilon}
  -\frac14\br{\Gamma_{1,\xi}|\nabla f|^2
  |\nabla\log(\psi_\varepsilon\Gamma_{1,\xi})|^2}{\psi_\varepsilon}.
\end{align}

\smallskip

\noindent
3. The last term in \eqref{equat}  is nonpositive and may therefore be
discarded.  Moreover, \eqref{eq:logpsi-laplacian} states that
$$
  -\Delta\log\psi_\varepsilon
  \le\frac{2d\nu}{N}\ipair\frac1{|y^i-y^j|^2}.
$$
Multiplying this pointwise inequality by the nonnegative function
$\frac12\psi_\varepsilon\Gamma_{1,\xi}|\nabla f|^2$ and integrating gives
$$
  -\frac12\br{\Gamma_{1,\xi}|\nabla f|^2\Delta\log\psi_\varepsilon}{\psi_\varepsilon}
  \le\frac{d\nu}{N}\ipair
  \br{\Gamma_{1,\xi}\frac{|\nabla f|^2}{|y^i-y^j|^2}}{\psi_\varepsilon}.
$$
Therefore,
\begin{align*}
  \frac{(d-2)^2}{N}\ipair
  \br{\Gamma_{1,\xi}\frac{|\nabla f|^2}{|y^i-y^j|^2}}{\psi_\varepsilon}
  &\le \br{\Gamma_{1,\xi}|\nabla^2f|^2}{\psi_\varepsilon}
  +\frac{d\nu}{N}\ipair
  \br{\Gamma_{1,\xi}\frac{|\nabla f|^2}{|y^i-y^j|^2}}{\psi_\varepsilon}
  \\
  &\quad+\frac{dN}{4}
  \br{\Gamma_{1,\xi}|\nabla f|^2}{\psi_\varepsilon}.
\end{align*}
Moving the middle term in the right-hand side to the left-hand side yields
\[
  \frac{(d-2)^2-d\nu}{N}\ipair
  \br{\Gamma_{1,\xi}\frac{|\nabla f|^2}{|y^i-y^j|^2}}{\psi_\varepsilon}
  \le \br{\Gamma_{1,\xi}|\nabla^2f|^2}{\psi_\varepsilon}
  +\frac{dN}{4}\br{\Gamma_{1,\xi}|\nabla f|^2}{\psi_\varepsilon}.
\]
It remains to multiply this inequality by $N$ to arrive at \eqref{eq:first-singular}.
\end{proof}

\begin{proposition}\label{prop:componentwise}
For $\varepsilon>0$, $\xi\in\mathbb R^{dN}$, and $f\in C_c^\infty(\mathbb{R}^{dN})$,
\begin{equation}
\begin{aligned}
  &
  \left((d-2)^2-\frac{d\nu(N+2)}{2N}\right)
  \opair\sum_{\alpha=1}^d
  \br{\Gamma_{1,\xi}
  \frac{\left| (\nabla_i f)_\alpha \right|^2}
  {|y^i-y^j|^2}}{\psi_\varepsilon}\\
  &\quad\le 2(N-1)\br{\Gamma_{1,\xi}|\nabla^2f|^2}{\psi_\varepsilon}
  +\frac d2(N-1)\br{\Gamma_{1,\xi}|\nabla f|^2}{\psi_\varepsilon} +\frac{d\nu}{N}\ipair
  \br{\Gamma_{1,\xi}\frac{|\nabla f|^2}{|y^i-y^j|^2}}{\psi_\varepsilon}.
\end{aligned}
  \label{eq:component-singular}
\end{equation}
Therefore,
\begin{equation}
\begin{aligned}
  & \left((d-2)^2-\frac{d\nu(N+2)}{2N}\right)
   \opair\sum_{\alpha=1}^d
  \br{\Gamma_{1,\xi}
  \frac{\left|(\nabla_i f)_\alpha\right|^2}
  {|y^i-y^j|^2}}{\psi_\varepsilon} \\
  &\le\left(2(N-1)+\frac{d\nu}{(d-2)^2-d\nu}\right)
  \br{\Gamma_{1,\xi}|\nabla^2f|^2}{\psi_\varepsilon}\\
  &\quad+\left(
    \frac d2(N-1)+\frac{d^2\nu N}{4\bigl((d-2)^2-d\nu\bigr)}
  \right)\br{\Gamma_{1,\xi}|\nabla f|^2}{\psi_\varepsilon}.
\end{aligned}
  \label{eq:component-explicit}
\end{equation}
\end{proposition}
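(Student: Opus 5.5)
The plan is to run the same ground-state argument as in Proposition~\ref{prop:first-singular}, but with the \emph{pair} Hardy inequality \eqref{eq:hardy-pair} in place of \eqref{eq:hardy-many}, applied componentwise. Fix $i$, $\alpha$, and $j\ne i$, and write $g:=(\nabla_i f)_\alpha$.

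\textbf{Step 1: pair Hardy inequality and ground-state identity.} The function $g\sqrt{\psi_\varepsilon\Gamma_{1,\xi}}$ is compactly supported and lies in $W^{1,2}$, so by density \eqref{eq:hardy-pair} applies to it. Combining this with \eqref{eq:ground-state-pair} (with $h=g$), and discarding the nonpositive last term, gives
\[
(d-2)^2\br{\Gamma_{1,\xi}\tfrac{g^2}{|y^i-y^j|^2}}{\psi_\varepsilon}
\le \br{\Gamma_{1,\xi}|(\nabla_i-\nabla_j)g|^2}{\psi_\varepsilon}
-\tfrac12\br{\Gamma_{1,\xi}g^2(\nabla_i-\nabla_j)\cdot(\nabla_i-\nabla_j)\log(\psi_\varepsilon\Gamma_{1,\xi})}{\psi_\varepsilon}.
\]

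\textbf{Step 2: splitting the directional Laplacian.} The Gaussian part is explicit. Since $\log\Gamma_{1,\xi}=\mathrm{const}-|y-\xi|^2/4$, the mixed derivatives vanish and
\[
(\nabla_i-\nabla_j)\cdot(\nabla_i-\nabla_j)\log\Gamma_{1,\xi}=-d.
\]
This contributes $+\frac d2\br{\Gamma_{1,\xi}g^2}{\psi_\varepsilon}$. The $\psi_\varepsilon$ part is bounded by \eqref{eq:logpsi-directional}. Its contribution is
\[
\frac{d\nu}{2N}\Big[\frac{4}{|y^i-y^j|^2}+\sum_{k\ne i,j}\Big(\frac1{|y^i-y^k|^2}+\frac1{|y^j-y^k|^2}\Big)\Big]
\]
integrated against $\Gamma_{1,\xi}g^2\psi_\varepsilon$.

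\textbf{Step 3: summing over $j\ne i$, then over $i$ and $\alpha$.} This bookkeeping is the main obstacle, because the constants must come out exactly. The terms are handled as follows.
\begin{itemize}
\item \emph{Gradient term.} Use $|(\nabla_i-\nabla_j)g|^2\le 2|\nabla_ig|^2+2|\nabla_jg|^2$. Then
\[
\sum_{j\ne i}\bigl(2|\nabla_ig|^2+2|\nabla_jg|^2\bigr)=2(N-2)|\nabla_ig|^2+2|\nabla g|^2\le 2(N-1)|\nabla g|^2.
\]
Summing over $i,\alpha$ gives $2(N-1)|\nabla^2f|^2$.
\item \emph{Gaussian term.} It yields $\frac d2(N-1)|\nabla f|^2$.
\item \emph{The $4/|y^i-y^j|^2$ piece.} It gives $\frac{2d\nu}{N}$ times the left-hand quantity.
\item \emph{The $1/|y^i-y^k|^2$ pieces.} Summed over $j\ne i$, each $k$ appears $N-2$ times. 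This gives $\frac{d\nu(N-2)}{2N}$ times the left-hand quantity.
\item \emph{The $1/|y^j-y^k|^2$ pieces.} After summing over $\alpha$ and $i$, these are at most
\[
\frac{d\nu}{2N}\sum_i|\nabla_if|^2\sum_{j\ne k}\frac{1}{|y^j-y^k|^2}\le\frac{d\nu}{N}\sum_{j<k}\frac{|\nabla f|^2}{|y^j-y^k|^2}.
\]
This is exactly the last term of \eqref{eq:component-singular}.
\end{itemize}
The two absorbable pieces add to $d\nu\bigl(\frac2N+\frac{N-2}{2N}\bigr)=\frac{d\nu(N+2)}{2N}$. Moving them to the left-hand side gives \eqref{eq:component-singular}.

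\textbf{Step 4: the explicit form.} To get \eqref{eq:component-explicit}, bound the remaining singular term $\frac{d\nu}{N}\sum_{i<j}\br{\Gamma_{1,\xi}|\nabla f|^2/|y^i-y^j|^2}{\psi_\varepsilon}$ by Proposition~\ref{prop:first-singular}. Since $(d-2)^2-d\nu>0$ under \eqref{eq:full-nu-condition}, this gives
\[
\frac{d\nu}{(d-2)^2-d\nu}\Big(\br{\Gamma_{1,\xi}|\nabla^2f|^2}{\psi_\varepsilon}+\frac{dN}{4}\br{\Gamma_{1,\xi}|\nabla f|^2}{\psi_\varepsilon}\Big),
\]
which produces exactly the stated coefficients.
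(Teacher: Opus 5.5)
Your proposal is correct and follows the paper's proof essentially step for step. The common route is: apply the pair Hardy inequality \eqref{eq:hardy-pair} to $(\nabla_i f)_\alpha\sqrt{\psi_\varepsilon\Gamma_{1,\xi}}$ together with \eqref{eq:ground-state-pair}, use the explicit Gaussian contribution $-d$, bound the interaction part by \eqref{eq:logpsi-directional} with the same $(N+2)$ and $2\sum_{i<j}$ counting, and finally invoke Proposition~\ref{prop:first-singular}. The only difference is cosmetic: you bound the Hessian term by summing over $j\ne i$ first, whereas the paper counts diagonal and off-diagonal Hessian blocks directly, and both give $2(N-1)|\nabla^2 f|^2$.
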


\begin{proof}

\smallskip
\noindent
1. Fix $i\ne j$ and
$1\le\alpha\le d$. By a simple approximation argument, the inequality  \eqref{eq:hardy-pair} holds for the function
$
  \frac{\partial f}{\partial(y^i)_\alpha}
  \sqrt{\psi_\varepsilon\Gamma_{1,\xi}}.
$
Applying that inequality, then
summing over all ordered pairs $i\ne j$ and all $\alpha$, and using
\eqref{eq:ground-state-pair}, gives
\begin{align*}
  &
  (d-2)^2\opair\sum_{\alpha=1}^d
  \br{\Gamma_{1,\xi}
  \frac{\left|(\nabla_i f)_\alpha\right|^2}
  {|y^i-y^j|^2}}{\psi_\varepsilon}\\
  &\quad\le
  \opair\sum_{\alpha=1}^d
  \br{\Gamma_{1,\xi}\left|(\nabla_i-\nabla_j)
  (\nabla_i f)_\alpha \right|^2}{\psi_\varepsilon}\\
  &\quad -\frac12\opair\sum_{\alpha=1}^d
  \br{\Gamma_{1,\xi}
  \left|(\nabla_i f)_\alpha \right|^2
  (\nabla_i-\nabla_j)\cdot(\nabla_i-\nabla_j)
  \log(\psi_\varepsilon\Gamma_{1,\xi})}{\psi_\varepsilon}\\
  &\quad -\frac14\opair\sum_{\alpha=1}^d
  \br{\Gamma_{1,\xi}
  \left|(\nabla_i f)_\alpha \right|^2
  \left|(\nabla_i-\nabla_j)
  \log(\psi_\varepsilon\Gamma_{1,\xi})\right|^2}{\psi_\varepsilon}.
\end{align*}
The last term is non-positive and may therefore be discarded when estimating
the right-hand side from above.

\smallskip
\noindent

2. The elementary inequality $|\cdot -\cdot|^2\le2|\cdot|^2+2|\cdot|^2$ gives
\begin{align*}
  &\opair\sum_{\alpha=1}^d
  \left|(\nabla_i-\nabla_j)
  (\nabla_i f)_\alpha \right|^2\\
  &\quad\le
  2(N-1)\sum_{i=1}^N\sum_{\alpha,\beta=1}^d
  \left|\frac{\partial^2f}
  {\partial(y^i)_\beta\partial(y^i)_\alpha}\right|^2
  +2\opair\sum_{\alpha,\beta=1}^d
  \left|\frac{\partial^2f}
  {\partial(y^j)_\beta\partial(y^i)_\alpha}\right|^2\\
  &\quad\le2(N-1)|\nabla^2f|^2.
\end{align*}
Indeed, the diagonal blocks of the matrix of second derivatives occur
$N-1$ times in the first sum, whereas each off-diagonal block occurs once
in the second sum.

Furthermore,
$$
  (\nabla_i-\nabla_j)\cdot(\nabla_i-\nabla_j)\log\Gamma_{1,\xi}=-d,
$$
because each of $\Delta_i\log\Gamma_{1,\xi}$ and $\Delta_j\log\Gamma_{1,\xi}$ equals
$-d/2$, while the mixed derivatives vanish.  Hence the Gaussian part of
the second term above (upon writing $\log \psi_\varepsilon \Gamma_{1,\xi} = \log \psi_\varepsilon + \log \Gamma_{1,\xi}$) is
$$
  \frac d2\opair\sum_{\alpha=1}^d
  \br{\Gamma_{1,\xi}
  \left|(\nabla_i f)_\alpha \right|^2
  }{\psi_\varepsilon}
  =\frac d2(N-1)\br{\Gamma_{1,\xi}|\nabla f|^2}{\psi_\varepsilon},
$$
and, by \eqref{eq:logpsi-directional},
\begin{align*}
  &-\frac12\opair\sum_{\alpha=1}^d
  \br{\Gamma_{1,\xi}
  \left|(\nabla_i f)_\alpha \right|^2
  (\nabla_i-\nabla_j)\cdot(\nabla_i-\nabla_j)
  \log\psi_\varepsilon}{\psi_\varepsilon}\\
  &\quad\le\frac{d\nu}{2N}
  \br{\Gamma_{1,\xi}\opair|\nabla_i f|^2\left[
  \frac4{|y^i-y^j|^2}
  +\sum_{k\ne i,j}\left(
  \frac1{|y^i-y^k|^2}+\frac1{|y^j-y^k|^2}
  \right)\right]}{\psi_\varepsilon}.
\end{align*}
We now count the terms in the square brackets.  First,
\begin{align*}
  \opair |\nabla_i f|^2\sum_{k\ne i,j}\frac1{|y^i-y^k|^2} & = \sum_{i=1}^N\sum_{k\ne i} \frac{|\nabla_i f|^2}{|y^i-y^k|^2}
\sum_{j\notin\{i,k\}}1 \\
& = (N-2) \sum_{i\neq k} \frac{|\nabla_i f|^2}{|y^i-y^k|^2}\\
  & = (N-2)\opair\frac{|\nabla_i f|^2}{|y^i-y^j|^2},
\end{align*}
since, for each fixed ordered pair $(i,k)$, there are exactly $N-2$
admissible choices of $j$.  Second,
\[
\begin{aligned}
  \opair |\nabla_i f|^2\sum_{k\ne i,j}\frac1{|y^j-y^k|^2}
  &=2\sum_{j<k}\frac{\sum_{i\notin\{j,k\}}|\nabla_i f|^2}
  {|y^j-y^k|^2} \le 2 \sum_{j<k}\frac{|\nabla f|^2}{|y^j-y^k|^2}.
\end{aligned}
\]
Combining the two estimates gives
\begin{align*}
  &\opair |\nabla_i f|^2\left[
  \frac4{|y^i-y^j|^2}
  +\sum_{k\ne i,j}\left(
  \frac1{|y^i-y^k|^2}+\frac1{|y^j-y^k|^2}
  \right)\right]\\
  &\qquad\le(N+2)\opair
  \frac{|\nabla_i f|^2}{|y^i-y^j|^2}
  +2\ipair\frac{|\nabla f|^2}{|y^i-y^j|^2}.
\end{align*}
After integration with
the weight, the interaction contribution is bounded above by
\begin{align*}
  &\frac{d\nu(N+2)}{2N}\opair
  \br{\Gamma_{1,\xi}\frac{|\nabla_i f|^2}{|y^i-y^j|^2}}
  {\psi_\varepsilon} +\frac{d\nu}{N}\ipair
  \br{\Gamma_{1,\xi}\frac{|\nabla f|^2}{|y^i-y^j|^2}}
  {\psi_\varepsilon}.
\end{align*}

3. We obtain
\begin{align*}
  (d-2)^2\opair\sum_{\alpha=1}^d
  \br{\Gamma_{1,\xi}
  \frac{\left|(\nabla_i f)_\alpha\right|^2}
  {|y^i-y^j|^2}}{\psi_\varepsilon} & \le 2(N-1)\br{\Gamma_{1,\xi}|\nabla^2f|^2}{\psi_\varepsilon}
  +\frac d2(N-1)\br{\Gamma_{1,\xi}|\nabla f|^2}{\psi_\varepsilon}\\
  &\qquad+\frac{d\nu(N+2)}{2N}\opair
  \br{\Gamma_{1,\xi}\frac{|\nabla_i f|^2}{|y^i-y^j|^2}}
  {\psi_\varepsilon}\\
  &\qquad+\frac{d\nu}{N}\ipair
  \br{\Gamma_{1,\xi}\frac{|\nabla f|^2}{|y^i-y^j|^2}}
  {\psi_\varepsilon}.
\end{align*}
The ordered singular term in the second line is the same as the sum on the
left after summing over $\alpha$, i.e.\,writing $$
  \sum_{\alpha=1}^d
  \left|\frac{\partial f}{\partial(y^i)_\alpha}\right|^2 =  \sum_{\alpha=1}^d |(\nabla_i f)_\alpha |^2
  =|\nabla_i f|^2.$$
Moving it to the left gives  \eqref{eq:component-singular}.

4. Under \eqref{eq:full-nu-condition}, one has $(d-2)^2-d\nu>0$.  Therefore,
\eqref{eq:first-singular} implies
\begin{align*}
  &\frac{d\nu}{N}\ipair
  \br{\Gamma_{1,\xi}\frac{|\nabla f|^2}{|y^i-y^j|^2}}
  {\psi_\varepsilon}\\
  &\quad\le\frac{d\nu}{(d-2)^2-d\nu}
  \br{\Gamma_{1,\xi}|\nabla^2f|^2}{\psi_\varepsilon}
  +\frac{d^2\nu N}{4\bigl((d-2)^2-d\nu\bigr)}
  \br{\Gamma_{1,\xi}|\nabla f|^2}{\psi_\varepsilon}.
\end{align*}
Substituting this bound into \eqref{eq:component-singular} gives
\eqref{eq:component-explicit}.
\end{proof}

\subsection{Bochner identity}

\begin{proposition} \label{prop:bochner}
In $L^2(\mathbb{R}^{dN},\psi_\varepsilon\Gamma_{1,\xi}d y)$, for $\varepsilon>0$,
define
\begin{equation}
  \mathcal Af:=-\frac1{\psi_\varepsilon\Gamma_{1,\xi}}
  \operatorname{div}(\psi_\varepsilon\Gamma_{1,\xi}\nabla f),
  \qquad f\in C_c^\infty(\mathbb{R}^{dN}).
  \label{eq:def-A}
\end{equation}
Then $\mathcal A$ is symmetric and nonnegative. For real-valued
$f\in C_c^\infty(\mathbb R^{dN})$, one has
\begin{equation}
  \br{\Gamma_{1,\xi}(\mathcal Af)f}{\psi_\varepsilon}
  =\br{\Gamma_{1,\xi}|\nabla f|^2}{\psi_\varepsilon}.
  \label{eq:A-form}
\end{equation}
Moreover,
\begin{equation}
\begin{aligned}
  \br{\Gamma_{1,\xi}|\mathcal Af|^2}{\psi_\varepsilon}
  &=\br{\Gamma_{1,\xi}|\nabla^2f|^2}{\psi_\varepsilon}
  +\frac12\br{\Gamma_{1,\xi}|\nabla f|^2}{\psi_\varepsilon}\\
  &\quad-\br{\Gamma_{1,\xi}(\nabla^2\log\psi_\varepsilon\,\nabla f)
    \cdot\nabla f}{\psi_\varepsilon},
\end{aligned}
  \label{eq:bochner}
\end{equation}
and therefore
\begin{equation}
  \br{\Gamma_{1,\xi}|\mathcal Af|^2}{\psi_\varepsilon}
  \ge M_\psi\br{\Gamma_{1,\xi}|\nabla f|^2}{\psi_\varepsilon}
  =M_\psi\br{\Gamma_{1,\xi}(\mathcal Af)f}{\psi_\varepsilon}.
  \label{eq:A-coercive}
\end{equation}
\end{proposition}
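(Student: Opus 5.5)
The plan is to prove \eqref{eq:A-form} and \eqref{eq:bochner} by direct integration by parts against the smooth, positive weight $\rho:=\psi_\varepsilon\Gamma_{1,\xi}$ ($\varepsilon>0$). Coercivity \eqref{eq:A-coercive} will then follow by controlling the only bad term, $-\br{\Gamma_{1,\xi}(\nabla^2\log\psi_\varepsilon\nabla f)\cdot\nabla f}{\psi_\varepsilon}$, with Lemma~\ref{prop:logpsi} and the weighted Hardy-type estimates of Propositions~\ref{prop:first-singular} and~\ref{prop:componentwise}.

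\textbf{Step 1 (form and symmetry).} For $f,g\in C_c^\infty$ the divergence theorem gives
\[
\int(\mathcal Af)g\,\rho\,dy=-\int \operatorname{div}(\rho\nabla f)\,g\,dy=\int\rho\nabla f\cdot\nabla g\,dy .
\]
There are no boundary terms because $f$ and $g$ have compact support. This expression is symmetric in $f,g$, which proves symmetry; taking $g=f$ gives \eqref{eq:A-form} and nonnegativity.

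\textbf{Step 2 (Bochner identity).} Put $V:=-\log\rho$, so that $\mathcal Af=-\Delta f+\nabla V\cdot\nabla f$. I will use the standard weighted Bochner formula
\[
\int|\mathcal Af|^2\rho=\int\bigl(|\nabla^2f|^2+(\nabla^2V\,\nabla f)\cdot\nabla f\bigr)\rho .
\]
To derive it, write $\int|\mathcal Af|^2\rho=\int\nabla f\cdot\nabla(\mathcal Af)\,\rho$ (by Step 1 with $g=\mathcal Af$). Then use the commutator $\nabla\mathcal Af=\mathcal A\nabla f+\nabla^2V\nabla f$, and integrate by parts once more, so that $\int\nabla f\cdot\mathcal A(\nabla f)\rho=\int|\nabla^2f|^2\rho$. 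Since $\nabla^2V=-\nabla^2\log\psi_\varepsilon+\tfrac12 I$, because $\nabla^2(|y-\xi|^2/4)=\tfrac12I$, this is exactly \eqref{eq:bochner}. All manipulations are legitimate because $\rho$ is smooth for $\varepsilon>0$ and $f$ has compact support.

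\textbf{Step 3 (coercivity; main obstacle).} By the second inequality in \eqref{eq:full-hessian},
\[
\br{\Gamma_{1,\xi}(\nabla^2\log\psi_\varepsilon\nabla f)\cdot\nabla f}{\psi_\varepsilon}\le\frac{2\nu}{N}\opair\sum_\alpha\br{\Gamma_{1,\xi}\frac{|(\nabla_if)_\alpha|^2}{|y^i-y^j|^2}}{\psi_\varepsilon}.
\]
I will then insert \eqref{eq:component-explicit}. The choice of \eqref{eq:component-explicit} rather than \eqref{eq:first-singular} is the key point: the componentwise version carries only $O(N)$ coefficients in front of $H:=\br{\Gamma_{1,\xi}|\nabla^2f|^2}{\psi_\varepsilon}$ and $G:=\br{\Gamma_{1,\xi}|\nabla f|^2}{\psi_\varepsilon}$, which cancel the prefactor $1/N$. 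By contrast, \eqref{eq:first-singular} would leave a $dN/4$ factor in front of $G$ that is not compensated.

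Writing $D=(d-2)^2-d\nu$ and $A=(d-2)^2-\frac{d\nu(N+2)}{2N}$, this bounds the bad term by
\[
\frac{2\nu}{NA}\Bigl(2(N-1)+\frac{d\nu}{D}\Bigr)H+\frac{2\nu}{NA}\Bigl(\frac{d(N-1)}2+\frac{d^2\nu N}{4D}\Bigr)G .
\]
The delicate part is the bookkeeping. I need to check, using $\nu<(d-2)^2/(3d)$, $(N+2)/(2N)\le1$ (so $A\ge D$) and $(N-1)/N\le1$, two things:
\begin{enumerate}[label=(\alph*)]
\item the $H$-coefficient is $\le1$, so that $H$ is absorbed by the $|\nabla^2f|^2$ term in \eqref{eq:bochner};
\item the remaining $G$-coefficient is $\le\tfrac12-M_\psi=\frac{d\nu}{D}$, matching \eqref{eq:full-gap-constant}.
\end{enumerate}
If the crude bounds do not land exactly on $M_\psi$, I would first try splitting the bad term as a convex combination: part is estimated via the first inequality of \eqref{eq:full-hessian} together with the pair Hardy inequality \eqref{eq:hardy-pair} under the ground-state identity \eqref{eq:ground-state-pair}, and part via \eqref{eq:component-explicit}, optimizing the split parameter so that the $H$-coefficient is exactly $1$. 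Once (a) and (b) hold, \eqref{eq:bochner} yields $\br{\Gamma_{1,\xi}|\mathcal Af|^2}{\psi_\varepsilon}\ge M_\psi G$, and the equality in \eqref{eq:A-coercive} is \eqref{eq:A-form}.
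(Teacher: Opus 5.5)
Your Steps 1 and 2 are correct and agree with the paper. Your commutator identity $\nabla\mathcal Af=\mathcal A\nabla f+\nabla^2V\,\nabla f$ is a tidier packaging of the paper's two integrations by parts. Step 3 follows exactly the paper's route: the second inequality in \eqref{eq:full-hessian}, then \eqref{eq:component-explicit}.

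In your notation put $x:=d\nu/D$; the hypothesis $\nu<(d-2)^2/(3d)$ is exactly $x<\tfrac12$. Check (a) is harmless: since $2\tfrac{N-1}{N}+\tfrac xN<2$, the $H$-coefficient is at most $4\nu/A\le4\nu/D<2/d$.

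The gap is check (b): the crude bounds you propose cannot deliver it. Your $G$-coefficient equals $\frac{d\nu}{2A}\bigl(2\tfrac{N-1}{N}+x\bigr)$. Replacing $A$ by $D$ and $\tfrac{N-1}{N}$ by $1$ gives only $x+\tfrac{x^2}{2}$, which is strictly larger than the required $x=\tfrac12-M_\psi$. Worse, for $x\ge\sqrt2-1$, i.e.\ $\nu$ close to the threshold, it leaves no positive gap at all. So the verification you planned does not close.

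The missing point is that you must keep the exact $N$-dependence, because your two roundings discard precisely the margins that are needed. Since $A=D+\frac{d\nu}{2}-\frac{d\nu}{N}$, inequality (b) is equivalent to $\frac{N-1}{N}D+\frac{d\nu}{2}\le A$, and
\[
 A-\frac{N-1}{N}\,D-\frac{d\nu}{2}=\frac{D-d\nu}{N}=\frac{(d-2)^2-2d\nu}{N}>0 .
\]
Two things happen here:
\begin{itemize}
\item the surplus $\frac{d\nu}{2}$ of $A$ over $D$ absorbs the contribution of the term $\frac{d^2\nu N}{4D}$;
\item the $\frac DN$ you save by not rounding $\frac{N-1}{N}$ up to $1$ absorbs the $-\frac{d\nu}{N}$ in $A$.
\end{itemize}
This is exactly the paper's computation. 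It yields $\langle\Gamma_{1,\xi}|\mathcal Af|^2\rangle_{\psi_\varepsilon}\ge\bigl(1-\frac{4\nu}{D}\bigr)H+M_\psi G$, and then you discard the nonnegative $H$-term.

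With this bookkeeping your convex-combination fallback is unnecessary. As stated it is also unverified. The pair ground-state identity applied to $(\nabla_i-\nabla_j)f$ produces, through \eqref{eq:logpsi-directional}, cross terms $|(\nabla_i-\nabla_j)f|^2/|y^i-y^k|^2$ with $k\ne i,j$, and you have not said how to control them.
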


\begin{proof}
1.~The integration-by-parts identity \eqref{eq:A-form} is immediate.

2.~Two further integrations by parts give the weighted Bochner identity
\eqref{eq:bochner}. In detail, first observe that
$$
 \mathcal Af  =  -\Delta f-\nabla\log(\psi_\varepsilon\Gamma_{1,\xi})\cdot\nabla f.
$$
All indices below range from $1$ to $dN$. Using the fact that $f$ has compact support, we obtain
\begin{align*}
 \br{\Gamma_{1,\xi}|\mathcal Af|^2}{\psi_\varepsilon}   &= -\int_{\mathbb R^{dN}}     \operatorname{div}(\psi_\varepsilon\Gamma_{1,\xi}\nabla f)\,\mathcal Af\,dy = \sum_i\int_{\mathbb R^{dN}}   \psi_\varepsilon\Gamma_{1,\xi}\, \partial_i f\,\partial_i(\mathcal Af)\,dy.
\end{align*}
Furthermore,
\begin{equation}
\label{Af_id}
 \partial_i(\mathcal Af)
 =
 -\partial_i\Delta f
 -\sum_j
    \partial_{ij}\log(\psi_\varepsilon\Gamma_{1,\xi})\,\partial_jf
 -\sum_j
    \partial_j\log(\psi_\varepsilon\Gamma_{1,\xi})\,\partial_{ij}f.
\end{equation}
For the term containing \(\partial_i\Delta f\), integration by parts
in the \(j\)-th coordinate gives
\begin{align*}
 -\sum_i\int_{\mathbb R^{dN}}
     \psi_\varepsilon\Gamma_{1,\xi}\,
     \partial_i f\,\partial_i\Delta f\,dy
 &=
 -\sum_{i,j}\int_{\mathbb R^{dN}}
     \psi_\varepsilon\Gamma_{1,\xi}\,
     \partial_i f\,\partial_{jji}f\,dy \\
 &=
 \sum_{i,j}\int_{\mathbb R^{dN}}
     \partial_j\bigl(
       \psi_\varepsilon\Gamma_{1,\xi}\,\partial_i f
     \bigr)\partial_{ij}f\,dy \\
 &=
 \int_{\mathbb R^{dN}}
     \psi_\varepsilon\Gamma_{1,\xi}|\nabla^2f|^2\,dy \\
 &\quad
 +\sum_{i,j}\int_{\mathbb R^{dN}}
     \psi_\varepsilon\Gamma_{1,\xi}\,
     \partial_j\log(\psi_\varepsilon\Gamma_{1,\xi})\,
     \partial_i f\,\partial_{ij}f\,dy.
\end{align*}
The last term cancels the term arising from the last summand in \eqref{Af_id}. Therefore,
$$
\left\langle\Gamma_{1,\xi}|\mathcal Af|^2\right\rangle_{\psi_\varepsilon}  = \left\langle\Gamma_{1,\xi}|\nabla^2f|^2\right\rangle_{\psi_\varepsilon}  - \left\langle \Gamma_{1,\xi} \bigl(  \nabla^2\log(\psi_\varepsilon\Gamma_{1,\xi})\nabla f
   \bigr)\cdot\nabla f \right\rangle_{\psi_\varepsilon}. 
$$
Since
\[
 \nabla^2\log(\psi_\varepsilon\Gamma_{1,\xi})
 =
 \nabla^2\log\psi_\varepsilon
 +
 \nabla^2\log\Gamma_{1,\xi}
 =
 \nabla^2\log\psi_\varepsilon-\frac12 I,
\]
we conclude that
\[
 \left\langle\Gamma_{1,\xi}|\mathcal Af|^2\right\rangle_{\psi_\varepsilon}
 =
 \left\langle\Gamma_{1,\xi}|\nabla^2f|^2\right\rangle_{\psi_\varepsilon}
 +\frac12
  \left\langle\Gamma_{1,\xi}|\nabla f|^2\right\rangle_{\psi_\varepsilon}
 -
 \left\langle
   \Gamma_{1,\xi}
   (\nabla^2\log\psi_\varepsilon\nabla f)\cdot\nabla f
 \right\rangle_{\psi_\varepsilon},
\]
which is \eqref{eq:bochner}.

For $\varepsilon>0$, the density is smooth and positive,
so all derivatives and integrations by parts above are classical.

3.~We now prove \eqref{eq:A-coercive}. To this end, we need to estimate the interaction term in \eqref{eq:bochner}. By
\eqref{eq:full-nu-condition},
\[
 (d-2)^2-\frac{d\nu}{2}\left(1+\frac2N\right)
 \ge (d-2)^2-d\nu>0.
\]
Multiplying \eqref{eq:component-explicit} by $2\nu/N$ and
dividing by its positive left-hand coefficient gives
\begin{align*}
 &\frac{2\nu}{N}\opair
   \br{\Gamma_{1,\xi}\frac{|\nabla_i f|^2}{|y^i-y^j|^2}}
      {\psi_\varepsilon}\\
 &\quad\le
 \frac{2\nu\left(2\frac{N-1}{N}
       +\frac{d\nu}{N((d-2)^2-d\nu)}\right)}
      {(d-2)^2-\frac{d\nu}{2}(1+2/N)}
       \br{\Gamma_{1,\xi}|\nabla^2f|^2}{\psi_\varepsilon}\\
 &\qquad+
 \frac{\frac{d\nu}{2}\left(2\frac{N-1}{N}
       +\frac{d\nu}{(d-2)^2-d\nu}\right)}
      {(d-2)^2-\frac{d\nu}{2}(1+2/N)}
       \br{\Gamma_{1,\xi}|\nabla f|^2}{\psi_\varepsilon}.
\end{align*}
We bound the two coefficients independently of $N$.
Since $d\nu/((d-2)^2-d\nu)<1/2$, the parenthesis in the
coefficient of the second-order derivatives integral is at most $2$.
Regarding the first derivatives integral, we have
\[
 \frac12\left(2\frac{N-1}{N}
        +\frac{d\nu}{(d-2)^2-d\nu}\right)
        \bigl((d-2)^2-d\nu\bigr)
 \le (d-2)^2-\frac{d\nu}{2}\left(1+\frac2N\right).
\]
The right-hand side minus the left-hand side is
$((d-2)^2-2d\nu)/N>0$. We have thus proved
\begin{equation}
 \begin{aligned}
 \frac{2\nu}{N}\opair
   \br{\Gamma_{1,\xi}\frac{|\nabla_i f|^2}{|y^i-y^j|^2}}
      {\psi_\varepsilon}
 &\le\frac{4\nu}{(d-2)^2-d\nu}
       \br{\Gamma_{1,\xi}|\nabla^2f|^2}{\psi_\varepsilon}\\
 &\quad+\frac{d\nu}{(d-2)^2-d\nu}
       \br{\Gamma_{1,\xi}|\nabla f|^2}{\psi_\varepsilon}.
 \end{aligned}
 \label{eq:ordered-hardy-control}
\end{equation}
Now, using \eqref{eq:full-hessian}, we bound the interaction
term in the Bochner identity. Hence
\begin{equation}
 \begin{aligned}
 \br{\Gamma_{1,\xi}|\mathcal Af|^2}{\psi_\varepsilon}
 &\ge\left(1-\frac{4\nu}{(d-2)^2-d\nu}\right)
       \br{\Gamma_{1,\xi}|\nabla^2f|^2}{\psi_\varepsilon}\\
 &\quad+M_\psi\br{\Gamma_{1,\xi}|\nabla f|^2}{\psi_\varepsilon}.
 \end{aligned}
 \label{eq:full-bochner-coercivity}
\end{equation}
Here
\[
 \frac{4\nu}{(d-2)^2-d\nu}<\frac2d<1,
 \qquad
 \frac12-\frac{d\nu}{(d-2)^2-d\nu}=:M_\psi>0.
\]
Discarding the nonnegative second-derivatives integral yields the sought inequality
\eqref{eq:A-coercive}.
\end{proof}

\begin{proposition}
\label{prop:spectral}
For $\varepsilon>0$, the operator $\mathcal A$ is essentially self-adjoint on
$C_c^\infty(\mathbb{R}^{dN})$.  Its closure, still denoted by $\mathcal A$, has spectrum
$$
  \sigma(\mathcal A)\subset\{0\}\cup[M_\psi,\infty).
$$
\end{proposition}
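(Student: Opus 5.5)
Essential self-adjointness and the spectral statement will be proved separately: the first by a standard criterion for weighted Laplacians with smooth positive density, the second from the coercivity estimate \eqref{eq:A-coercive} together with the spectral theorem.

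\emph{Essential self-adjointness.} For $\varepsilon>0$ the weight $\rho:=\psi_\varepsilon\Gamma_{1,\xi}$ is smooth and strictly positive on $\mathbb R^{dN}$, and $\mathcal A=-\Delta-\nabla\log\rho\cdot\nabla$. Since $\mathcal A$ is symmetric and nonnegative (Proposition~\ref{prop:bochner}), it suffices to show that $\ker(\mathcal A^*+1)=\{0\}$. Take $u\in L^2_\rho$ with $\langle u,(\mathcal A+1)\phi\rangle_\rho=0$ for all $\phi\in C_c^\infty$. Then $u$ solves the elliptic equation $-\operatorname{div}(\rho\nabla u)+\rho u=0$ in the sense of distributions. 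The coefficients are smooth, so $u\in C^\infty$ by elliptic regularity.

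Next we test this equation with $\chi_R^2u$, where $\chi_R(y)=\chi(y/R)$ is a standard cutoff with $|\nabla\chi_R|\le C/R$. This gives
\[
\int\chi_R^2|\nabla u|^2\rho+\int\chi_R^2u^2\rho=-2\int\chi_R u\nabla\chi_R\cdot\nabla u\,\rho .
\]
Applying Cauchy--Schwarz to the right-hand side and absorbing, we get
\[
\int\chi_R^2u^2\rho\le \frac{C}{R^2}\int_{\{R\le|y|\le 2R\}}u^2\rho\to0 .
\]
Hence $u=0$. This is the classical argument for Dirichlet operators on complete manifolds with smooth measure; it uses only that the metric is Euclidean and hence complete. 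No growth condition on $\nabla\log\rho$ is needed. The consequence is that the closure $\mathcal A$ is the self-adjoint operator associated with the closure of the form \eqref{eq:A-form}.

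\emph{Spectral gap.} By Lemma~\ref{lem:psi-mass}, constants lie in $L^2_\rho$. We approximate the constant $1$ by $\chi_R$: then $\mathcal A\chi_R\to0$ in $L^2_\rho$. For this we need $\nabla\log\rho\cdot\nabla\chi_R$ to be small in $L^2_\rho$. This holds because $|\nabla\log\rho|\le C(\sum_{i<j}|y^i-y^j|_\varepsilon^{-1}+|y-\xi|)$, which is bounded by $C_\varepsilon(1+|y|)$, and because of the second-moment bound in Lemma~\ref{lem:psi-mass}. Hence $1\in D(\mathcal A)$ and $\mathcal A1=0$, so $0\in\sigma(\mathcal A)$.

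Now let $P$ be the spectral measure of the closure. The key step is to extend \eqref{eq:A-coercive}, that is, $\|\mathcal Af\|^2\ge M_\psi\langle\mathcal Af,f\rangle$, from $C_c^\infty$ to all of $D(\mathcal A)$. This follows because $C_c^\infty$ is a core: if $f_n\to f$ in graph norm, both sides converge. In spectral terms the inequality reads
\[
\int\lambda(\lambda-M_\psi)\,d\langle P_\lambda f,f\rangle\ge0\qquad\text{for all }f\in D(\mathcal A).
\]
Suppose $\sigma(\mathcal A)\cap(0,M_\psi)\neq\emptyset$. Then some interval $J\Subset(0,M_\psi)$ has $P(J)\neq0$. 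Choosing $0\ne f\in\operatorname{Ran}P(J)\subset D(\mathcal A)$ makes the integrand negative on $J$, so the integral is negative, which is a contradiction. Therefore $\sigma(\mathcal A)\subset\{0\}\cup[M_\psi,\infty)$.

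The main obstacle is the essential self-adjointness, specifically justifying the cutoff argument. Once it holds, density of the core carries the a priori Bochner bound to the closure, and the spectral conclusion is formal. If the distributional-solution route proved delicate, an alternative would be the Wielens/Chernoff criterion: the diffusion generated by $\mathcal A$ is non-explosive because its drift grows at most linearly, $|\nabla\log\rho|\le C_\varepsilon(1+|y|)$.
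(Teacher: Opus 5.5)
Your proposal is correct and follows essentially the same route as the paper. Both arguments use the range criterion $\ker(\mathcal A^*+1)=\{0\}$, proved with the cutoff test function $\chi_R^2 u$. They then extend \eqref{eq:A-coercive} to $D(\mathcal A)$ by graph-norm approximation and derive a contradiction from a nonzero spectral projection of an interval compactly inside $(0,M_\psi)$.

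The differences are minor. Your step showing $1\in D(\mathcal A)$ is not needed for the inclusion $\sigma(\mathcal A)\subset\{0\}\cup[M_\psi,\infty)$; the paper handles constants later, at the form level. Also, since \eqref{eq:A-coercive} is stated for real $f$, you should apply it to the real and imaginary parts of the approximants before passing to the limit, as the paper does.
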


\begin{proof}
1.~We first prove essential self-adjointness. We use below some standard operator-theoretic results, see, for example,
\cite[Chapter~VIII]{ReedSimonI}. Since $\mathcal A$ is densely
defined, symmetric, and nonnegative, the standard range criterion reduces
the problem to proving
$$
  \ker(\mathcal A^*+1)=\{0\}.
$$
Indeed,
\[
 \overline{\operatorname{Ran}(\mathcal A+1)}
 =
 \ker(\mathcal A^*+1)^\perp,
\]
while nonnegativity gives
\[
        \|(\mathcal A+1)f\|\geq \|f\|,
        \qquad f\in C_c^\infty(\mathbb R^{dN}).
\]
The same estimate holds for the closure of \(\mathcal A+1\), and hence the
range of this closure is closed. Thus, if
\(\ker(\mathcal A^*+1)=\{0\}\), then \(\operatorname{Ran}(\mathcal A+1)\) is dense,
and consequently the closure of \(\mathcal A+1\) is onto. The range criterion
therefore implies that \(\mathcal A\) is essentially self-adjoint.

Let us prove that $\ker(\mathcal A^*+1)=\{0\}$.
Let $u\in\ker(\mathcal A^*+1)$. Thus \(u\in D(\mathcal A^*)\) and
\(\mathcal A^*u=-u\). By the definition of the adjoint, for every
\(\zeta\in C_c^\infty(\mathbb R^{dN})\),
\[
 \bigl\langle \mathcal A\zeta,u\bigr\rangle_{L^2(\psi_\varepsilon\Gamma_{1,\xi}dy)}
 =
 \bigl\langle \zeta, \mathcal A^*u\bigr\rangle_{L^2(\psi_\varepsilon\Gamma_{1,\xi}dy)}
 =
 -\bigl\langle \zeta,u\bigr\rangle_{L^2(\psi_\varepsilon\Gamma_{1,\xi}dy)}.
\]
By the definition of \(\mathcal A\), this becomes
\[
 -\int_{\mathbb R^{dN}}
      \operatorname{div}
      \bigl(\psi_\varepsilon\Gamma_{1,\xi}\nabla\zeta\bigr)
      \,\overline{u}\,dy
 +
 \int_{\mathbb R^{dN}}
      \psi_\varepsilon\Gamma_{1,\xi}\zeta\,\overline{u}\,dy
 =0.
\]
Since the coefficients are real, taking complex conjugates and replacing
\(\overline{\zeta}\) by \(\zeta\) gives
\[
 -\int_{\mathbb R^{dN}}
      u\,\operatorname{div}
      \bigl(\psi_\varepsilon\Gamma_{1,\xi}\nabla\zeta\bigr)\,dy
 +
 \int_{\mathbb R^{dN}}
      \psi_\varepsilon\Gamma_{1,\xi}u\zeta\,dy
 =0,
 \qquad
 \zeta\in C_c^\infty(\mathbb R^{dN}).
\]
Hence we have, in the sense of distributions,
\begin{equation}
\label{u_eq}
  -\operatorname{div}(\psi_\varepsilon\Gamma_{1,\xi}\nabla u)
  +\psi_\varepsilon\Gamma_{1,\xi}u=0.
\end{equation}
Furthermore, on every compact subset of $\mathbb{R}^{dN}$, the function
$\psi_\varepsilon\Gamma_{1,\xi}$ belongs to $C^{1,1}$ and is bounded above and
below by strictly positive constants.  The equation is therefore locally
uniformly elliptic, and so
$u\in W^{1,2}_{\mathrm{loc}}(\mathbb{R}^{dN})$.
Now, choose $\chi_R\in C_c^\infty(\mathbb{R}^{dN})$ such that
$$
  0\le\chi_R\le1,\qquad
  \chi_R=1\ \text{on }B_R,\qquad
  \operatorname{supp}\chi_R\subset B_{2R},\qquad
  |\nabla\chi_R|\le\frac{C}{R}.
$$
Multiplying equation \eqref{u_eq} by the test function $\chi_R^2\overline u$, integrating, and taking real parts yields
$$
 \bigl\langle  \Gamma_{1,\xi}   \chi_R^2|\nabla u|^2 \bigr\rangle_{\psi_\varepsilon} + \bigl\langle   \Gamma_{1,\xi}
    \chi_R^2|u|^2\bigr\rangle_{\psi_\varepsilon} = -2\operatorname{Re} \bigl\langle  \Gamma_{1,\xi}\chi_R\overline u\,
    \nabla u\cdot\nabla\chi_R \bigr\rangle_{\psi_\varepsilon}
$$
The Cauchy--Schwarz inequality therefore yields
$$
 \frac{1}{2} \bigl\langle  \Gamma_{1,\xi}   \chi_R^2|\nabla u|^2 \bigr\rangle_{\psi_\varepsilon} + \bigl\langle   \Gamma_{1,\xi}   \chi_R^2|u|^2\bigr\rangle_{\psi_\varepsilon}  \le 2 \bigl\langle  \Gamma_{1,\xi}
    |u|^2|\nabla\chi_R|^2  \bigr\rangle_{\psi_\varepsilon}
$$
Since $\chi_R=1$ on $B_R$, it follows that
\[
  \int_{B_R}\psi_\varepsilon\Gamma_{1,\xi}|u|^2d y
  \le\frac{C}{R^2}
  \|u\|_{L^2(\psi_\varepsilon\Gamma_{1,\xi}d y)}^2.
\]
Letting $R\rightarrow\infty$ and using monotone convergence gives $u=0$.
Consequently, $\mathcal A$ is essentially self-adjoint.

2.~We next extend \eqref{eq:A-coercive} to the domain of the closure which we
continue to denote by $\mathcal A$, a self-adjoint operator.  Let $f\in D(\mathcal A)$.  By the
definition of the closure, there exists
$f_n\in C_c^\infty(\mathbb{R}^{dN})$ such that
\[
  f_n\rightarrow f
  \quad \text{ and } \quad
  \mathcal Af_n\rightarrow\mathcal Af
  \quad\text{in }L^2(\psi_\varepsilon\Gamma_{1,\xi}d y).
\]
We apply \eqref{eq:A-coercive} separately to the real and imaginary parts
of $f_n$, and then pass to the limit:
\begin{equation}
\label{contr_ineq}
  \br{\Gamma_{1,\xi}|\mathcal Af|^2}{\psi_\varepsilon}
  \ge M_\psi\br{\Gamma_{1,\xi}(\mathcal Af)\overline f}{\psi_\varepsilon}
  \quad f\in D(\mathcal A).
\end{equation}

Now, using the previous inequality, we describe the spectrum of $\mathcal A$. Let \(E_{\mathcal A}\) denote the spectral
resolution of the self-adjoint operator \(\mathcal A\), so that
\[
        E_{\mathcal A}(B)=\mathbf 1_B(\mathcal A)
\]
for every Borel set \(B\subset\mathbb R\). Fix an interval
\([a,b]\subset(0,M_\psi)\). We claim that
\[
        E_{\mathcal A}([a,b])=0.
\]
Suppose, to the contrary, that \(E_{\mathcal A}([a,b])\neq0\), and choose
\[
        0\neq f\in\operatorname{Ran}E_{\mathcal A}([a,b]).
\]
The spectral measure of \(f\) is then supported in \([a,b]\). The spectral characterization of the operator domain
therefore gives (we omit the weight indices)
\[
 \int_{\mathbb R}\lambda^2\,
     d\langle E_{\mathcal A}(\lambda)f,f\rangle
 =
 \int_{[a,b]}\lambda^2\,
     d\langle E_{\mathcal A}(\lambda)f,f\rangle
 \leq b^2\|f\|^2<\infty.
\]
Hence \(f\in D(\mathcal A)\).
By the Spectral theorem,
\begin{align*}
 \left\langle\Gamma_{1,\xi}|\mathcal Af|^2\right\rangle_{\psi_\varepsilon}
 -
 M_\psi\left\langle\Gamma_{1,\xi}(\mathcal Af)\overline f\right\rangle_{\psi_\varepsilon}
 &=
 \int_{[a,b]}\lambda(\lambda-M_\psi)\,
     d\left\langle E_{\mathcal A}(\lambda)f,f\right\rangle.
\end{align*}
The function $\lambda \mapsto \left\langle E_{\mathcal A}(\lambda)f,f\right\rangle$ is non-decreasing and, by our hypothesis, $d\left\langle E_{\mathcal A}(\lambda)f,f\right\rangle$ has non-zero total mass. Since the integrand $\lambda(\lambda-M_\psi)$ is strictly negative on $[a,b]$, the Stieltjes integral is strictly negative.
This contradicts \eqref{contr_ineq}.
Consequently,
\[
        E_{\mathcal A}([a,b])=0
        \qquad\text{for every }[a,b]\subset(0,M_\psi).
\]

If \(\lambda_0\in\sigma(\mathcal A)\cap(0,M_\psi)\), we could choose
\(0<a<\lambda_0<b<M_\psi\). Every open neighborhood of a point of the
spectrum of a self-adjoint operator has a nonzero spectral projection.
Hence \(E_{\mathcal A}([a,b])\neq0\), contradicting the preceding conclusion.
Therefore,
\[
        \sigma(\mathcal A)\cap(0,M_\psi)=\varnothing.
\]
\end{proof}

\subsection{Proof of  the spectral gap inequality completed}
\label{subsec:gap-completion}

We complete the proof of Theorem~\ref{thm:full-gap} in four steps.
We first fix $s=1$, $\varepsilon>0$, and an arbitrary Gaussian center
$\xi\in\mathbb R^{dN}$.

\medskip
\noindent\emph{Step 1}. Let us first prove the spectral gap inequality for $\varepsilon>0$, and on the form-domain.
By Proposition~\ref{prop:spectral}, the operator $\mathcal A$, initially
defined on $C_c^\infty(\mathbb R^{dN})$, is essentially self-adjoint
and nonnegative. Its closure, still denoted by $\mathcal A$, is therefore
a nonnegative self-adjoint operator. The closure of the gradient form
in \eqref{eq:A-form} defines its Friedrichs extension. Since the
self-adjoint extension is unique, this is precisely $\mathcal A$.
Consequently, its closed form has domain $D(\mathcal A^{1/2})$, and
\[
 \|\mathcal A^{1/2}f\|_{L^2(\Gamma_{1,\xi}\psi_\varepsilon dy)}^2
 =\br{\Gamma_{1,\xi}|\nabla f|^2}{\psi_\varepsilon},
 \qquad f\in D(\mathcal A^{1/2}).
\]
Here $C_c^\infty(\mathbb R^{dN})$ is a form core: every
$f\in D(\mathcal A^{1/2})$ has an approximating sequence
$f_n\in C_c^\infty(\mathbb R^{dN})$ for which
\[
 \br{\Gamma_{1,\xi}|f_n-f|^2}{\psi_\varepsilon}
 +\br{\Gamma_{1,\xi}|\nabla f_n-\nabla f|^2}{\psi_\varepsilon}
 \rightarrow0.
\]
The square root of the sum of the weighted $L^2$ norm squared and
the form energy is the form norm. The gradient appearing here is
the distributional gradient. Indeed, a sequence that is Cauchy in
the form norm is Cauchy in ordinary $W^{1,2}(B_R)$ for every $R>0$:
the density $\Gamma_{1,\xi}\psi_\varepsilon$ is continuous and bounded
below by a positive constant on each compact ball. Its local Sobolev
limit identifies the limit of the gradients with the distributional
gradient of $f$.

We next identify the kernel of $\mathcal A$. Lemma~\ref{lem:psi-mass} gives
\[
 0<\br{\Gamma_{1,\xi}}{\psi_\varepsilon}<\infty,
\]
so constant functions belong to the weighted $L^2$ space. To see
that they also belong to the form domain, choose
$\chi_R\in C_c^\infty(\mathbb R^{dN})$ with
\[
 0\le\chi_R\le1,\qquad
 \chi_R=1\ \text{on }B_R,\qquad
 \operatorname{supp}\chi_R\subset B_{2R},\qquad
 |\nabla\chi_R|\le C/R.
\]
Then
\[
 \br{\Gamma_{1,\xi}|1-\chi_R|^2}{\psi_\varepsilon}
 \le\int_{|y|>R}\Gamma_{1,\xi}\psi_\varepsilon\,dy
 \rightarrow0,
\]
because the density is integrable, and
\[
 \br{\Gamma_{1,\xi}|\nabla\chi_R|^2}{\psi_\varepsilon}
 \le\frac{C^2}{R^2}\br{\Gamma_{1,\xi}}{\psi_\varepsilon}
 \rightarrow0.
\]
Thus the cutoffs converge to $1$ in the closed form domain, and
\[
 1\in D(\mathcal A^{1/2}),\qquad \mathcal A^{1/2}1=0.
\]
The same holds for every constant function.

Conversely, let $h\in D(\mathcal A^{1/2})$ satisfy
$\mathcal A^{1/2}h=0$, and choose a sequence $h_n$
converging to $h$ in the form-norm. For every $R>0$, local comparability of the
weighted and unweighted norms gives
\[
 h_n\rightarrow h\quad\text{in }L^2(B_R),\qquad
 \nabla h_n\rightarrow0\quad\text{in }L^2(B_R).
\]
For $\zeta\in C_c^\infty(B_R)$ and each coordinate $i$, integration
by parts therefore yields
\[
 \int_{B_R}h\,\partial_i\zeta\,dy
 =\lim_{n\to\infty}\int_{B_R}h_n\,\partial_i\zeta\,dy
 =-\lim_{n\to\infty}\int_{B_R}\partial_i h_n\,\zeta\,dy=0.
\]
Hence $\nabla h=0$ in the sense of distributions on every ball.
Since $\mathbb R^{dN}$ is connected, $h$ is constant almost
everywhere. For a nonnegative self-adjoint operator the kernels
of the operator and its square root agree, so
\[
 \ker\mathcal A=\ker\mathcal A^{1/2}
 =\{\text{constant functions}\}.
\]

For real-valued $f\in D(\mathcal A^{1/2})$, put
\[
 \bar f:=\frac{\br{\Gamma_{1,\xi}f}{\psi_\varepsilon}}
                  {\br{\Gamma_{1,\xi}}{\psi_\varepsilon}}.
\]
The numerator is finite by the Cauchy--Schwarz inequality and the
finite mass of the weight. Moreover,
\[
 \br{\Gamma_{1,\xi}(f-\bar f)}{\psi_\varepsilon}=0.
\]
Thus the constant function $\bar f$ is the orthogonal projection of
$f$ onto $\ker\mathcal A$, and $f-\bar f$ is orthogonal to that kernel.
In the spectral notation used above, this says
\[
 E_{\mathcal A}(\{0\})f=\bar f,\qquad
 E_{\mathcal A}(\{0\})(f-\bar f)=0.
\]
Proposition~\ref{prop:spectral} gives
$\sigma(\mathcal A)\subset\{0\}\cup[M_\psi,\infty)$.
The spectral measure of $h=f-\bar f$ is therefore supported in
$[M_\psi,\infty)$. By the Spectral theorem, omitting the weight
indices,
\begin{align*}
 \|\mathcal A^{1/2}h\|^2
 &=\int_{[M_\psi,\infty)}\lambda\,
       d\langle E_{\mathcal A}(\lambda)h,h\rangle\\
 &\ge M_\psi\int_{[M_\psi,\infty)}
       d\langle E_{\mathcal A}(\lambda)h,h\rangle
 =M_\psi\|h\|^2.
\end{align*}
Finally, expanding the square and using the zero weighted mean of
$f-\bar f$ gives, for every $c\in\mathbb R$,
\[
 \br{\Gamma_{1,\xi}|f-c|^2}{\psi_\varepsilon}
 =\br{\Gamma_{1,\xi}|f-\bar f|^2}{\psi_\varepsilon}
  +|c-\bar f|^2\br{\Gamma_{1,\xi}}{\psi_\varepsilon}.
\]
The weighted mean is therefore the unique minimizing constant.
Since subtracting a constant does not change the form energy, we
have proved
\begin{equation}
 \begin{aligned}
 \br{\Gamma_{1,\xi}|\nabla f|^2}{\psi_\varepsilon}
 &=\|\mathcal A^{1/2}(f-\bar f)\|_{L^2(\Gamma_{1,\xi}\psi_\varepsilon dy)}^2\\
 &\ge M_\psi\br{\Gamma_{1,\xi}|f-\bar f|^2}{\psi_\varepsilon}
 =M_\psi\inf_{c\in\mathbb R}
             \br{\Gamma_{1,\xi}|f-c|^2}{\psi_\varepsilon}.
 \end{aligned}
 \label{eq:full-gap-unit}
\end{equation}

\medskip
\noindent\emph{Step 2}. We now extend the previous spectral gap inequality, still for $\varepsilon>0$, to locally Sobolev functions.
Let
\[
 f\in L^2_{\Gamma_{1,\xi}\psi_\varepsilon}(\mathbb R^{dN})
       \cap W^{1,2}_{\mathrm{loc}}(\mathbb R^{dN})
\]
be real-valued and have finite weighted energy. We show 
that $f\in D(\mathcal A^{1/2})$, so that \eqref{eq:full-gap-unit}
applies. (If the weighted energy is infinite, the inequality already
holds.)

Using the cutoffs from Step~1, we first obtain
\[
 \br{\Gamma_{1,\xi}|\chi_Rf-f|^2}{\psi_\varepsilon}
 \le\int_{|y|>R}\Gamma_{1,\xi}\psi_\varepsilon|f|^2\,dy
 \rightarrow0.
\]
We have
\[
 \nabla(\chi_Rf)-\nabla f
 =(\chi_R-1)\nabla f+f\nabla\chi_R.
\]
It follows that
\begin{align*}
 &\br{\Gamma_{1,\xi}|\nabla(\chi_Rf)-\nabla f|^2}{\psi_\varepsilon}\\
 &\quad\le2\int_{|y|>R}\Gamma_{1,\xi}\psi_\varepsilon
                                      |\nabla f|^2\,dy
       +\frac{2C^2}{R^2}
          \int_{B_{2R}\setminus B_R}
             \Gamma_{1,\xi}\psi_\varepsilon|f|^2\,dy\\
 &\quad\le2\int_{|y|>R}\Gamma_{1,\xi}\psi_\varepsilon
                                      |\nabla f|^2\,dy
       +\frac{2C^2}{R^2}\br{\Gamma_{1,\xi}|f|^2}{\psi_\varepsilon}
       \rightarrow0.
\end{align*}
The first term tends to zero because the weighted energy is finite, while
the second tends to zero because $f$ belongs to the weighted $L^2$
space. Thus cutoff approximation converges both in weighted $L^2$ norm
and in weighted gradient norm.

For fixed $R$, the function $\chi_Rf$ belongs to ordinary
$W^{1,2}(\mathbb R^{dN})$ and is supported in $B_{2R}$. On the
slightly larger ball $B_{2R+1}$ the density is bounded above and
below by positive constants, which may depend on $R$, $\varepsilon$,
and $\xi$. Mollifying $\chi_Rf$ with smooth kernels of radius less
than $1$ therefore gives functions
$f_{R,n}\in C_c^\infty(B_{2R+1})$ converging in ordinary $W^{1,2}$,
and hence satisfying
\[
 \br{\Gamma_{1,\xi}|f_{R,n}-\chi_Rf|^2}{\psi_\varepsilon}
 +\br{\Gamma_{1,\xi}|\nabla f_{R,n}-\nabla(\chi_Rf)|^2}
       {\psi_\varepsilon}
 \rightarrow0\quad\text{as }n\to\infty.
\]
Taking $R\to\infty$ and choosing $n$ large enough at each stage
produces a single sequence in $C_c^\infty(\mathbb R^{dN})$
converging to $f$ in weighted $L^2$ norm and weighted gradient norm.
By the definition of the closed form, $f\in D(\mathcal A^{1/2})$.
This proves \eqref{eq:full-gap-unit} for the full locally Sobolev
class at unit scale and every $\varepsilon>0$.

\medskip
\noindent\emph{Step 3}. We now pass to the limit $\varepsilon \downarrow 0$.
Recall that $\psi=\psi_0$. Since 
$|y^i-y^j|_\varepsilon$ decreases to $|y^i-y^j|$ and its exponent
in $\psi_\varepsilon$ is negative,
\[
 0<\psi_\varepsilon\le\psi,\qquad
 \psi_\varepsilon\uparrow\psi\quad\text{a.e. as }\varepsilon\downarrow0,
\]
where a.e.\,excludes only the collision sets. Let
\[
 f\in L^2_{\Gamma_{1,\xi}\psi}(\mathbb R^{dN})
       \cap W^{1,2}_{\mathrm{loc}}(\mathbb R^{dN})
\]
be real-valued. As before, we may assume that its weighted energy
is finite, otherwise the sought inequality holds by definition. The inequality $\psi_\varepsilon\le\psi$ gives
\[
 \br{\Gamma_{1,\xi}|f|^2}{\psi_\varepsilon}
 \le\br{\Gamma_{1,\xi}|f|^2}{\psi}<\infty,\qquad
 \br{\Gamma_{1,\xi}|\nabla f|^2}{\psi_\varepsilon}
 \le\br{\Gamma_{1,\xi}|\nabla f|^2}{\psi}<\infty.
\]
Thus Step~2 applies to this same function $f$ for every
$\varepsilon>0$.

We now verify the convergence of all terms in the inequality.
By the monotone convergence theorem,
\begin{align*}
 \br{\Gamma_{1,\xi}}{\psi_\varepsilon}
 &\rightarrow\br{\Gamma_{1,\xi}}{\psi},\\
 \br{\Gamma_{1,\xi}|f|^2}{\psi_\varepsilon}
 &\rightarrow\br{\Gamma_{1,\xi}|f|^2}{\psi},\\
 \br{\Gamma_{1,\xi}|\nabla f|^2}{\psi_\varepsilon}
 &\rightarrow\br{\Gamma_{1,\xi}|\nabla f|^2}{\psi}.
\end{align*}
Lemma~\ref{lem:psi-mass} ensures that the limiting mass is finite
and strictly positive. It also gives, by Cauchy--Schwarz,
\[
 \br{\Gamma_{1,\xi}|f|}{\psi}
 \le\br{\Gamma_{1,\xi}}{\psi}^{1/2}
      \br{\Gamma_{1,\xi}|f|^2}{\psi}^{1/2}<\infty.
\]
Since $\Gamma_{1,\xi}\psi_\varepsilon|f|
\le\Gamma_{1,\xi}\psi|f|$, dominated convergence therefore gives
\[
 \br{\Gamma_{1,\xi}f}{\psi_\varepsilon}
 \rightarrow\br{\Gamma_{1,\xi}f}{\psi}.
\]
In particular, the weighted means converge:
\[
 \frac{\br{\Gamma_{1,\xi}f}{\psi_\varepsilon}}
      {\br{\Gamma_{1,\xi}}{\psi_\varepsilon}}
 \rightarrow
 \frac{\br{\Gamma_{1,\xi}f}{\psi}}
      {\br{\Gamma_{1,\xi}}{\psi}}.
\]
To pass to the limit in the minimum over constants, use the
variance identity from Step~1,
\begin{equation}
 \inf_{c\in\mathbb R}\br{\Gamma_{1,\xi}|f-c|^2}{\psi_\varepsilon}
 =\br{\Gamma_{1,\xi}|f|^2}{\psi_\varepsilon}
  -\frac{\br{\Gamma_{1,\xi}f}{\psi_\varepsilon}^{\,2}}
         {\br{\Gamma_{1,\xi}}{\psi_\varepsilon}}.
 \label{eq:variance-limit}
\end{equation}
The preceding convergence results show that its right-hand side
converges to
\[
 \br{\Gamma_{1,\xi}|f|^2}{\psi}
 -\frac{\br{\Gamma_{1,\xi}f}{\psi}^{\,2}}
        {\br{\Gamma_{1,\xi}}{\psi}}
 =\inf_{c\in\mathbb R}\br{\Gamma_{1,\xi}|f-c|^2}{\psi}.
\]
Letting $\varepsilon\downarrow0$ in \eqref{eq:full-gap-unit} now yields
\[
 \br{\Gamma_{1,\xi}|\nabla f|^2}{\psi}
 \ge M_\psi\inf_{c\in\mathbb R}
               \br{\Gamma_{1,\xi}|f-c|^2}{\psi}.
\]
This proves the unit scale ($s=1$) spectral gap inequality also for the singular weight ($\varepsilon=0$).

\medskip
\noindent\emph{Step 4}. Finally, we pass to an arbitrary scale.
Fix $s>0$, $\varepsilon\ge0$, and $\xi\in\mathbb R^{dN}$, and let
$f\in L^2_{\Gamma_{s,\xi}\psi_\varepsilon}
\cap W^{1,2}_{\mathrm{loc}}$ be real-valued with finite weighted
energy. Set $$g(x)=f(\sqrt{s}\,x).$$ For every pair $i<j$,
\[
 |\sqrt{s}\,x^i-\sqrt{s}\,x^j|_\varepsilon
 =\bigl(s|x^i-x^j|^2+\varepsilon\bigr)^{1/2}
 =\sqrt{s}\,|x^i-x^j|_{\varepsilon/s}.
\]
There are $N(N-1)/2$ pair factors, each with exponent $-\nu/N$.
Hence the weight, the Gaussian density and the gradient scale as follows:
\[
 \psi_\varepsilon(\sqrt{s}\,x)
    =s^{-\nu(N-1)/4}\psi_{\varepsilon/s}(x),\qquad
 \Gamma_{s,\xi}(\sqrt{s}\,x)s^{dN/2}
    =\Gamma_{1,\xi/\sqrt{s}}(x),
\]
\[
 \nabla g(x)=\sqrt{s}\,\nabla f(\sqrt{s}\,x).
\]
These identities also hold a.e.\,when $\varepsilon=0$.
The change of variables $y=\sqrt{s}\,x$ gives
\begin{align*}
 \br{\Gamma_{s,\xi}}{\psi_\varepsilon}
 &=s^{-\nu(N-1)/4}
       \br{\Gamma_{1,\xi/\sqrt{s}}}{\psi_{\varepsilon/s}},\\
 \br{\Gamma_{s,\xi}f}{\psi_\varepsilon}
 &=s^{-\nu(N-1)/4}
       \br{\Gamma_{1,\xi/\sqrt{s}}g}{\psi_{\varepsilon/s}}.
\end{align*}
Thus the corresponding weighted means agree: their common value is
\[
 \bar f_{s,\xi,\varepsilon}
 :=\frac{\br{\Gamma_{s,\xi}f}{\psi_\varepsilon}}
          {\br{\Gamma_{s,\xi}}{\psi_\varepsilon}}
 =\frac{\br{\Gamma_{1,\xi/\sqrt{s}}g}{\psi_{\varepsilon/s}}}
        {\br{\Gamma_{1,\xi/\sqrt{s}}}{\psi_{\varepsilon/s}}}.
\]
More explicitly, for every $c\in\mathbb R$,
\[
 \br{\Gamma_{s,\xi}|f-c|^2}{\psi_\varepsilon}
 =s^{-\nu(N-1)/4}
       \br{\Gamma_{1,\xi/\sqrt{s}}|g-c|^2}{\psi_{\varepsilon/s}},
\]
and the energy satisfies
\[
 \br{\Gamma_{s,\xi}|\nabla f|^2}{\psi_\varepsilon}
 =s^{-1-\nu(N-1)/4}
       \br{\Gamma_{1,\xi/\sqrt{s}}|\nabla g|^2}{\psi_{\varepsilon/s}}.
\]
The first identity with $c=0$ and the second identity verify the
weighted integrability required to apply the unit-scale inequality
to $g$; the linear change of variables also preserves the local
Sobolev property. Taking the infimum over $c$ in the first identity
displays the variance scaling:
\[
 \inf_{c\in\mathbb R}\br{\Gamma_{s,\xi}|f-c|^2}{\psi_\varepsilon}
 =s^{-\nu(N-1)/4}\inf_{c\in\mathbb R}
       \br{\Gamma_{1,\xi/\sqrt{s}}|g-c|^2}{\psi_{\varepsilon/s}}.
\]
Apply Step~2 if $\varepsilon>0$, and Step~3 if $\varepsilon=0$,
with center $\xi/\sqrt{s}$ and regularization $\varepsilon/s$.
Using the two scaling identities, we obtain
\begin{align*}
 \br{\Gamma_{s,\xi}|\nabla f|^2}{\psi_\varepsilon}
 &=s^{-1-\nu(N-1)/4}
       \br{\Gamma_{1,\xi/\sqrt{s}}|\nabla g|^2}{\psi_{\varepsilon/s}}\\
 &\ge M_\psi s^{-1-\nu(N-1)/4}\inf_{c\in\mathbb R}
       \br{\Gamma_{1,\xi/\sqrt{s}}|g-c|^2}{\psi_{\varepsilon/s}}\\
 &=\frac{M_\psi}{s}\inf_{c\in\mathbb R}
       \br{\Gamma_{s,\xi}|f-c|^2}{\psi_\varepsilon}.
\end{align*}
This is
\eqref{eq:full-gap}. The proof of
Theorem~\ref{thm:full-gap} is completed. \hfill \qed

\bigskip

\section{Proof of Corollary \ref{cor:confined-consequences}}
Fix $s>0$, $\xi\in\mathbb R^{dN}$, and $\varepsilon\ge0$. Let $\mathcal A$ be the nonnegative self-adjoint operator defined
before the corollary. 

\medskip
\noindent\emph{Step 1.}
The closed form domain is $D(\mathcal A^{1/2})$, and
\[
 \|\mathcal A^{1/2}v\|_{L^2(\mu_{s,\xi,\varepsilon})}^2
 =\int|\nabla v|^2\,d\mu_{s,\xi,\varepsilon},
 \qquad v\in D(\mathcal A^{1/2}).
\]
Here $\nabla v$ is the distributional gradient. (Indeed, an
approximating sequence from $C_c^\infty$ in the form norm is
Cauchy in ordinary $W^{1,2}$ on every compact set, because the
density of $\mu_{s,\xi,\varepsilon}$ has a positive lower bound
there. This identifies the limit of its gradients with $\nabla v$.
This argument applies also when $\varepsilon=0$. Therefore,
\eqref{eq:confined-poincare} applies to every function in the
closed form domain. For complex-valued functions, apply that
inequality separately to the real and imaginary parts and add up the results.)

Let us also comment on why constants are preserved (here we allow for some repetitions with the previous proof). Choose
$\chi_R\in C_c^\infty$ with $0\le\chi_R\le1$, $\chi_R=1$ on
$B_R$, support in $B_{2R}$, and $|\nabla\chi_R|\le C/R$.
Since the measure has mass one,
\[
 \int|\chi_R-1|^2\,d\mu_{s,\xi,\varepsilon}\rightarrow0,
 \qquad
 \int|\nabla\chi_R|^2\,d\mu_{s,\xi,\varepsilon}
 \le\frac{C^2}{R^2}\rightarrow0.
\]
Thus $1$ belongs to the closed form domain and has zero gradient.
The identity
\[
 \int\nabla1\cdot\nabla\overline v\,d\mu_{s,\xi,\varepsilon}=0,
 \qquad v\in D(\mathcal A^{1/2}),
\]
and the definition of the operator associated with the form give
$1\in D(\mathcal A)$ and $\mathcal A1=0$. Hence
$e^{-t\mathcal A}1=1$. By self-adjointness, for every
$f\in L^2(\mu_{s,\xi,\varepsilon})$,
\[
 \int e^{-t\mathcal A}f\,d\mu_{s,\xi,\varepsilon}
 =\int f\,\bigl(e^{-t\mathcal A}1\bigr)
                  \,d\mu_{s,\xi,\varepsilon}
 =\int f\,d\mu_{s,\xi,\varepsilon}.
\]
Thus the semigroup preserves the mean appearing in
\eqref{eq:confined-relaxation}.

\medskip
\noindent\emph{Step 2.}
First let $f\in D(\mathcal A)$, and put
\[
 u(t):=e^{-t\mathcal A}f
          -\int f\,d\mu_{s,\xi,\varepsilon}.
\]
Then $u(t)\in D(\mathcal A)$,
$\partial_tu(t)=-\mathcal A u(t)$ in $L^2$, and Step~1 gives
$\int u(t)\,d\mu_{s,\xi,\varepsilon}=0$. Differentiating its
squared $L^2$ norm, we obtain
\begin{align*}
 \frac d{dt}\int|u(t)|^2\,d\mu_{s,\xi,\varepsilon}
 &=-2\operatorname{Re}\int
       (\mathcal A u(t))\overline{u(t)}\,d\mu_{s,\xi,\varepsilon}\\
 &=-2\int|\nabla u(t)|^2\,d\mu_{s,\xi,\varepsilon}\\
 &\le-\frac{2M_\psi}{s}
             \int|u(t)|^2\,d\mu_{s,\xi,\varepsilon}.
\end{align*}
The second equality is the defining identity of the closed form,
so it remains valid for the singular weight. The last inequality
is \eqref{eq:confined-poincare}, applied to the function $u(t)$
whose mean is zero. Gronwall's inequality yields
\[
 \int|u(t)|^2\,d\mu_{s,\xi,\varepsilon}
 \le e^{-2M_\psi t/s}
       \int|u(0)|^2\,d\mu_{s,\xi,\varepsilon}.
\]
Taking square roots gives \eqref{eq:confined-relaxation} for
$f\in D(\mathcal A)$.

For general $f\in L^2(\mu_{s,\xi,\varepsilon})$, since $\mathcal A$ is densely defined, we can choose
$f_n\in D(\mathcal A)$ converging to $f$ in that space. The semigroup of $\mathcal A$
is an $L^2$ contraction, so
\[
 \|e^{-t\mathcal A}(f_n-f)\|_{L^2(\mu_{s,\xi,\varepsilon})}
 \le\|f_n-f\|_{L^2(\mu_{s,\xi,\varepsilon})}
 \rightarrow0.
\]
Moreover, Cauchy--Schwarz and the fact that the measure has mass
one give
\[
 \left|\int(f_n-f)\,d\mu_{s,\xi,\varepsilon}\right|
 \le\|f_n-f\|_{L^2(\mu_{s,\xi,\varepsilon})}
 \rightarrow0.
\]
We may therefore pass to the limit $n \rightarrow \infty$ in
\eqref{eq:confined-relaxation} applied to $f_n$. This proves the first
assertion for every $f\in L^2(\mu_{s,\xi,\varepsilon})$.

\medskip
\noindent\emph{Step 3.}
Let $g:\mathbb R^d\to\mathbb R$ be Lipschitz. We first check that
the function in \eqref{eq:empirical-variance} satisfies the
integrability requirements of \eqref{eq:confined-poincare}.
The inequality
$|g(z)|\le|g(0)|+\operatorname{Lip}(g)|z|$ and
Cauchy--Schwarz give
\[
 \left|\frac1N\sum_{i=1}^Ng(x^i)\right|^2
 \le\frac1N\sum_{i=1}^N|g(x^i)|^2
 \le2|g(0)|^2+\frac{2\operatorname{Lip}(g)^2}{N}|x|^2.
\]
Lemma~\ref{lem:psi-mass}, together with
$|x|^2\le2|x-\xi|^2+2|\xi|^2$, shows that the last expression is
integrable with respect to $\mu_{s,\xi,\varepsilon}$. Thus the
average belongs to $L^2(\mu_{s,\xi,\varepsilon})$.

A Lipschitz function $g$ satisfies
$|\nabla g|\le\operatorname{Lip}(g)$ a.e. In
particular, the  average belongs to
$W^{1,2}_{\mathrm{loc}}(\mathbb R^{dN})$, and its derivative in
the $i$-th particle coordinate is
\[
 \nabla_i\left(\frac1N\sum_{j=1}^Ng(x^j)\right)
 =\frac1N\nabla g(x^i).
\]
The full gradient consists of these $N$ blocks of $d$ coordinates.
Its squared length is therefore the sum of the squared lengths
of the blocks:
\[
 \left|\nabla\left(\frac1N\sum_{i=1}^Ng(x^i)\right)\right|^2
 =\frac1{N^2}\sum_{i=1}^N|\nabla g(x^i)|^2
 \le\frac{\operatorname{Lip}(g)^2}{N}
 \quad\text{a.e.}
\]
Integrating this inequality and applying
\eqref{eq:confined-poincare} gives
\begin{align*}
 \operatorname{Var}_{\mu_{s,\xi,\varepsilon}}
       \left(\frac1N\sum_{i=1}^Ng(x^i)\right)
 &\le\frac{s}{M_\psi}\int
       \left|\nabla\left(\frac1N\sum_{i=1}^Ng(x^i)\right)\right|^2
                         \,d\mu_{s,\xi,\varepsilon}\\
 &\le\frac{s\,\operatorname{Lip}(g)^2}{M_\psi N}.
\end{align*}
This is \eqref{eq:empirical-variance}. The proof of Corollary \ref{cor:confined-consequences} is completed. \hfill\qed

\bigskip

\appendix

\section{Direct proof of the lower heat kernel bound}
\label{app:direct-heat}

\subsection{The spectral gap inequality for the cutoff weight}
\label{app:cutoff-gap}

We use the cutoff weight defined in \eqref{eta_def}--\eqref{phi_s_def}.

\begin{theorem}
\label{thm_spectral_gap}
For all $d\ge3$, $N \ge 2$, if the strength of attraction between the particles $\nu>0$ satisfies
\begin{equation}
  0<\nu<\frac{(d-2)^2}{5d}.
  \label{eq:nu-condition}
\end{equation}
For $\xi\in\mathbb R^{dN}$ put
\[
 m_{s,\varepsilon}(\xi)
 :=\br{\Gamma_{s,\xi}}{\varphi_{s,\varepsilon}},
 \qquad
 \bar f^{\,\xi}_{s,\varepsilon}
 :=m_{s,\varepsilon}(\xi)^{-1}
   \br{\Gamma_{s,\xi}f}{\varphi_{s,\varepsilon}}.
\]
Then
\begin{equation}
 \br{\Gamma_{s,\xi}|\nabla f|^2}{\varphi_{s,\varepsilon}}
 \geq\frac Ms
 \br{\Gamma_{s,\xi}
       |f-\bar f^{\,\xi}_{s,\varepsilon}|^2}
      {\varphi_{s,\varepsilon}},
 \label{eq:translated-gap}
\end{equation}
uniformly in $\xi,s$, and $\varepsilon$,
where the constant
$$
  M:=\frac{(d-2)^2-5d\nu}{2((d-2)^2-d\nu)}
$$
and also in the limiting case $\varepsilon=0$
\begin{equation}
  \br{\Gamma_{s,\xi}|\nabla f|^2}{\varphi_{s}}   \ge \frac{M}{s}  \br{\Gamma_{s,\xi}|f-\bar{f}_{s}|^2}{\varphi_{s}}.
  \label{eq:main-gap2}
\end{equation}
\end{theorem}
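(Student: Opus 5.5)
I will follow the same scheme as the proof of Theorem~\ref{thm:full-gap} in Section~\ref{sec:gap-proof}, replacing $\psi_\varepsilon$ by $\varphi_{s,\varepsilon}$. The scaling step is now simpler, because by \eqref{eq:scaled-weights} we have $\varphi_{s,\varepsilon}(\sqrt s\,x)=\varphi_{1,\varepsilon/s}(x)$ with no power of $s$. So I first reduce, exactly as in Step~4, to $s=1$ and an arbitrary center $\xi/\sqrt s$ and regularization $\varepsilon/s$. The passage $\varepsilon\downarrow0$ goes as in Step~3, but via dominated rather than monotone convergence. Indeed $\eta$ is non-increasing and $|z|_\varepsilon\downarrow|z|$, so $\varphi_{1,\varepsilon}\uparrow\varphi_1$. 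Moreover $\varphi_1\le C\psi\cdot$const near collisions, so Lemma~\ref{lem:psi-mass} still gives finite mass; the lower bound \eqref{eq:uniform-weight-lower} gives positive mass. The essential self-adjointness, the spectral-gap deduction via $\sigma(\mathcal A)\subset\{0\}\cup[M,\infty)$, the kernel identification and the extension to $W^{1,2}_{\rm loc}$ copy Proposition~\ref{prop:spectral} and Steps~1--2 verbatim, since $\varphi_{1,\varepsilon}$ is $C^{1,1}_{\rm loc}$ and locally bounded above and below. For the integrations by parts in the Bochner identity, $C^{1,1}$ suffices, because only $\nabla^2\log\varphi$ in $L^\infty_{\rm loc}$ appears.

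The core is the coercivity bound $\br{\Gamma_{1,\xi}|\mathcal Af|^2}{\varphi_{1,\varepsilon}}\ge M\br{\Gamma_{1,\xi}|\nabla f|^2}{\varphi_{1,\varepsilon}}$ for $f\in C_c^\infty$. The Bochner identity \eqref{eq:bochner} holds with $\psi_\varepsilon$ replaced by $\varphi_{1,\varepsilon}$. I then need pair-potential analogues of Lemma~\ref{lem:psi-interaction} and Lemma~\ref{prop:logpsi} for $g(z):=a(|z|_\varepsilon)$, so that $\log\varphi_{1,\varepsilon}=\frac\nu N\sum_{i<j}g(x^i-x^j)$.

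\begin{itemize}
\item On $|z|_\varepsilon<1$, $g=-\log|z|_\varepsilon$, and the bounds of Lemma~\ref{lem:psi-interaction} hold.
\item On $|z|_\varepsilon>2$, $g$ is constant.
\item On the transition annulus, $a'(r)=1-2/r\in[-1,0]$ and $a''(r)=2/r^2$. Hence $\nabla^2 g$ has eigenvalues controlled by $a''$ and $a'(r)/r$, which are bounded by $C/|z|^2$ with $C\le 2$, say.
\end{itemize}

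This bounded transition region is where the constant degrades from $3d\nu$ to $5d\nu$. I would aim to show $\nabla^2g\le 2I/|z|^2$ (upper) and $\operatorname{tr}$-lower bounds $-\Delta g\le 2d/|z|^2$ or similar. Then \eqref{eq:full-hessian}, \eqref{eq:logpsi-laplacian} and \eqref{eq:logpsi-directional} hold with $\nu$ replaced by $2\nu$ in the Hessian term, or with $d\nu$ constants suitably doubled. I would then rerun Propositions~\ref{prop:first-singular} and \ref{prop:componentwise} and the final bookkeeping of Proposition~\ref{prop:bochner}, tracking constants so that the coefficient in front of $\br{\Gamma|\nabla f|^2}{}$ becomes $\frac12-\frac{2d\nu}{(d-2)^2-d\nu}=M$.

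The main obstacle is precisely getting the constants right in the cutoff region, so that the Hardy absorptions still close with the left coefficient $(d-2)^2-d\nu$ while the Bochner loss is $4d\nu$ rather than $2d\nu$. Some terms in the ground-state identity that were nonpositive, such as $-\frac14|\nabla\log(\varphi\Gamma)|^2$, remain so and can be discarded. The Laplacian lower bound must also use $a'\le0$, $a''\ge0$ so that no worse than $d/|z|^2$ appears in \eqref{eq:logpsi-laplacian}. If a direct pointwise bound fails, I would instead choose to split the Hessian bound as $\frac{\nu}{N}\sum\frac{|\nabla_if-\nabla_jf|^2}{|x^i-x^j|^2}\cdot 2$ and absorb it using \eqref{eq:ordered-hardy-control}, whose right side only gets the factor $2$. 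This yields exactly $\frac12-\frac{2d\nu}{(d-2)^2-d\nu}=\frac{(d-2)^2-5d\nu}{2((d-2)^2-d\nu)}=M$, consistent with the stated constant. The second-derivative coefficient $1-\frac{8\nu}{(d-2)^2-d\nu}$ stays positive under \eqref{eq:nu-condition}.
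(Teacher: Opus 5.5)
Your proposal is correct and follows the paper's route. The paper likewise reruns the proof of Theorem~\ref{thm:full-gap} verbatim, and the only change is that on the transition annulus the eigenvalues of $\nabla^2 a(|z|_\varepsilon)$ lie in $[-|z|^{-2},\,2|z|^{-2}]$. This doubles the Hessian bound \eqref{eq:logphi-second-order} but leaves \eqref{eq:logphi-laplacian}, \eqref{eq:logphi-directional}, and hence \eqref{eq:ordered-hardy-control}, unchanged, so doubling only the Bochner interaction term gives $\frac12-\frac{2d\nu}{(d-2)^2-d\nu}=M$ exactly as you compute. (Your alternative of doubling the $d\nu$ constants in the Laplacian or directional bounds is neither needed nor compatible with the stated $M$.)
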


The proof of Theorem \ref{thm_spectral_gap} follows the proof of Theorem \ref{thm:full-gap} with a few modifications needed to account for the fact that the weight $\varphi$ is defined in terms of functions $a$ and $\eta$, that is, it is bounded from below by  the constant $c_\varphi>0$.

We prove in the same way as before:

\begin{lemma}
\label{lem:mass2}
There is a constant $C<\infty$ such that
for every $s>0$, $\varepsilon\ge0$, and
$\xi\in\mathbb R^{dN}$,
\begin{equation}
 c_\varphi
 \leq\br{\Gamma_{s,\xi}}{\varphi_{s,\varepsilon}}
 \leq C,
 \qquad
 \br{\Gamma_{s,\xi}
       \left(1+\frac{|\,\cdot-\xi|^2}{s}\right)}
      {\varphi_{s,\varepsilon}}
 \leq C.
 \label{eq:mass}
\end{equation}
\end{lemma}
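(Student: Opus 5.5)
The plan is to reduce to unit scale and then bound the cutoff weight above and below by quantities whose Gaussian integrals are already controlled.

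\emph{Scaling.} By \eqref{eq:scaled-weights} we have $\varphi_{s,\varepsilon}(\sqrt s\,x)=\varphi_{\varepsilon/s}(x)$. Note that, unlike $\psi_\varepsilon$, the cutoff weight has no extra power of $s$. Together with $\Gamma_{s,\xi}(\sqrt s\,x)s^{dN/2}=\Gamma_{1,\xi/\sqrt s}(x)$, the change of variables $y=\sqrt s\,x$ gives
\[
 \br{\Gamma_{s,\xi}}{\varphi_{s,\varepsilon}}=\br{\Gamma_{1,\xi/\sqrt s}}{\varphi_{\varepsilon/s}},
 \qquad
 \br{\Gamma_{s,\xi}\bigl(1+|\cdot-\xi|^2/s\bigr)}{\varphi_{s,\varepsilon}}
 =\br{\Gamma_{1,\xi/\sqrt s}\bigl(1+|\cdot-\xi/\sqrt s|^2\bigr)}{\varphi_{\varepsilon/s}}.
\]
It therefore suffices to prove \eqref{eq:mass} for $s=1$, uniformly in the center $\xi$ and in $\varepsilon\ge0$.

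\emph{Lower bound.} Use \eqref{eq:uniform-weight-lower}, i.e.\ $\varphi_{1,\varepsilon}\ge c_\varphi$. Since $\Gamma_{1,\xi}$ is a probability density, this gives $\br{\Gamma_{1,\xi}}{\varphi_\varepsilon}\ge c_\varphi$. It is worth checking the value: $a$ is nonincreasing, with $\inf a=1-2\log2$, so $\eta\ge e^{(\nu/N)(1-2\log 2)}$. Raising this to the power $N(N-1)/2$ gives exactly $(e/4)^{\nu(N-1)/2}$.

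\emph{Upper bounds.} Since $a$ is nonincreasing and $|z|_\varepsilon\ge|z|$, we get $\eta(|z|_\varepsilon)\le\eta(|z|)$, so it suffices to treat $\varepsilon=0$. Next, on $(0,1)$ we have $a(r)=-\log r$, and on $[1,\infty)$ we have $a\le 0$: on $[1,2]$ the function $r-1-2\log r$ vanishes at $r=1$ and is nonpositive there, and for $r>2$ the value $1-2\log 2$ is negative. Hence $a(r)\le\max\{-\log r,0\}$, which gives the pointwise bound
\[
 \eta(r)\le \max\{r^{-\nu/N},1\}\le 1+r^{-\nu/N}.
\]
Consequently
\[
 \varphi(x)\le\prod_{i<j}\bigl(1+|x^i-x^j|^{-\nu/N}\bigr).
\]
Expanding this product gives at most $2^{N(N-1)/2}$ terms, each a product of factors $|x^i-x^j|^{-\nu/N}$ over a subset of pairs. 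For each term, run the successive integration argument of Lemma~\ref{lem:psi-mass}: integrate out $x^N,\dots,x^2$ one at a time. At stage $m$ at most $m-1$ singular factors involve $x^m$, with total exponent $\nu(m-1)/N<\nu<d$, and Hölder's inequality against the Gaussian in $x^m$ gives a bound uniform in the remaining variables and in $\xi^m$. Here $\nu<(d-2)^2/(5d)<d$, so the hypothesis of that lemma holds.

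For the second moment, use $(1+|x-\xi|^2)\Gamma_{1,\xi}\le C\Gamma_{2,\xi}$ and repeat the same estimate at scale $2$. The resulting constant $C$ depends only on $d,N,\nu$.

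\emph{Main obstacle.} I expect the only point needing care is the pointwise comparison $\eta\le 1+r^{-\nu/N}$ across the middle piece $1\le r\le2$. This is settled by the monotonicity of $a$, since $a'(r)=1-2/r\le0$ on $[1,2]$. After that, the argument is the same as in Lemma~\ref{lem:psi-mass}.
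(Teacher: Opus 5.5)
Your proof is correct and follows the paper's route. The paper proves this lemma ``in the same way'' as Lemma~\ref{lem:psi-mass}: reduction to unit scale (here with no power of $s$, as you note), the lower bound from $\varphi_{s,\varepsilon}\ge c_\varphi$, reduction to $\varepsilon=0$ by monotonicity, successive H\"older integration in $x^N,\dots,x^2$, and the $\Gamma_{2,\xi}$ comparison for the second moment. Your pointwise bound $\eta(r)\le\max\{r^{-\nu/N},1\}$ is exactly the detail the paper leaves implicit when it transfers that argument to the cutoff weight.
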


\begin{lemma}\label{lem:interaction}
The following assertions hold.
\begin{enumerate}
\item At every point such that
$|z|_\varepsilon\in(0,\infty)\setminus\{1,2\}$, 
$a(|z|_\varepsilon)$ is twice differentiable and
\begin{equation}  \label{eq:chain-rule}
  \nabla^2 a(|z|_\varepsilon)
  =\frac{a'(|z|_\varepsilon)}{|z|_\varepsilon} I
  +\left(
    \frac{a''(|z|_\varepsilon)}{|z|_\varepsilon^2}
    -\frac{a'(|z|_\varepsilon)}{|z|_\varepsilon^3}
  \right)z\otimes z. 
\end{equation}

\item For almost every $z\ne0$ and every $\xi\in\mathbb{R}^d$,
\begin{equation}   \label{eq:second-derivative-a}
  -\frac{|\xi|^2}{|z|^2}
  \le \xi\cdot\nabla^2 a(|z|_\varepsilon) \xi
  \le \frac{2|\xi|^2}{|z|^2}.
\end{equation}
The same inequalities hold in the sense of distributions.

\item For every $z\ne0$,
\begin{equation}
  |\nabla a(|z|_\varepsilon)|\le\frac1{|z|}.
  \label{eq:gradient-a}
\end{equation}
\end{enumerate}
\end{lemma}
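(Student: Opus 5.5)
The plan is to compute everything explicitly from the piecewise definition of $a$ in \eqref{eta_def}, composed with $r=|z|_\varepsilon$. On each of the open intervals $(0,1)$, $(1,2)$, $(2,\infty)$ the function $a$ is smooth, with
\[
a'(r)=-\tfrac1r,\ a''(r)=\tfrac1{r^2}\ (r<1);\qquad a'(r)=1-\tfrac2r,\ a''(r)=\tfrac2{r^2}\ (1<r<2);\qquad a'=a''=0\ (r>2).
\]
At $r=1$ and $r=2$ the one-sided first derivatives agree: $a'(1^\pm)=-1$ and $a'(2^\pm)=0$. Hence $a\in C^{1,1}_{\mathrm{loc}}$, and $a''$ has only jump discontinuities there.

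For part (1), $r=|z|_\varepsilon=(|z|^2+\varepsilon)^{1/2}$ is smooth away from $z=0$ when $\varepsilon=0$, and everywhere when $\varepsilon>0$. We have $\nabla r=z/r$ and $\nabla^2 r=I/r-z\otimes z/r^3$. The chain rule $\nabla^2(a\circ r)=a'(r)\nabla^2 r+a''(r)\nabla r\otimes\nabla r$ then gives \eqref{eq:chain-rule} at every point with $r\notin\{1,2\}$.

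For part (3), $|\nabla a(|z|_\varepsilon)|=|a'(r)|\,|z|/r$. The derivative satisfies $|a'(r)|\le 1/r$ on all pieces: for $r<1$ it equals $1/r$; for $1\le r\le2$, $|1-2/r|=2/r-1\le1/r$ because $r\ge1$; for $r>2$ it vanishes. So $|\nabla a|\le |z|/r^2\le 1/|z|$, since $r^2\ge|z|^2$.

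For part (2), decompose $\xi$ into its components along $z$ and orthogonal to $z$. The quadratic form \eqref{eq:chain-rule} equals
\[
\frac{a'(r)}{r}|\xi_\perp|^2+a''(r)\frac{|z|^2}{r^2}|\xi_\parallel|^2+\frac{a'(r)}{r}\frac{\varepsilon}{r^2}|\xi_\parallel|^2 .
\]
This is the step requiring care. The bounds follow from $|a'(r)|/r\le 1/r^2\le1/|z|^2$ and $0\le a''(r)\le 2/r^2$:

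\begin{itemize}
\item \emph{Lower bound.} The only negative contributions come from $a'/r\ge -1/r^2$. Since $|\xi_\perp|^2+\tfrac{\varepsilon}{r^2}|\xi_\parallel|^2\le|\xi|^2$, we get $\ge -|\xi|^2/r^2\ge-|\xi|^2/|z|^2$.
\item \emph{Upper bound.} The term $a''|z|^2/r^2\le 2/r^2$ contributes at most $2|\xi_\parallel|^2/r^2$. The terms involving $a'$ are positive only on $(2-\text{region}$ excluded$)$, i.e.\ on $(1,2)$, where $a'/r=(r-2)/r^2\le0$. Thus they are nonpositive everywhere, and the form is $\le 2|\xi|^2/r^2\le 2|\xi|^2/|z|^2$.
\end{itemize}

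The set $\{|z|_\varepsilon\in\{1,2\}\}$ is a union of spheres, hence Lebesgue-null, so these pointwise bounds hold almost everywhere.

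Finally, the distributional statement is the main (mild) obstacle. Because $a\circ|\cdot|_\varepsilon$ is $C^{1,1}_{\mathrm{loc}}$ on $\{z\neq0\}$, its gradient is locally Lipschitz there. The distributional Hessian therefore equals the a.e.\ pointwise Hessian, with no singular part on the spheres where $a''$ jumps; this follows from Rademacher's theorem together with integration by parts of the Lipschitz gradient. Testing against nonnegative $\zeta\in C_c^\infty(\{z\ne0\})$ then transfers the a.e.\ inequalities to the distributional sense. I would state this for test functions supported away from $z=0$ (and note that for $\varepsilon>0$ the origin is harmless, since the function is $C^{1,1}$ near it).
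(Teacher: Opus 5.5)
Your proposal is correct and follows the paper's proof. Both use the chain rule with $\nabla r=z/r$, $\nabla^2 r=I/r-z\otimes z/r^3$, the radial/tangential splitting of $\xi$, and the bound $|a'(r)|\le 1/r$ for the gradient. Both also use the $C^{1,1}_{\mathrm{loc}}$ regularity coming from continuity of $a'$ at $r=1,2$ to exclude any singular part of the distributional Hessian. The only difference is cosmetic: you replace the paper's case-by-case eigenvalue computation with the uniform bounds $-1/r\le a'\le 0$ and $0\le a''\le 2/r^2$, which works. The garbled sentence in your upper-bound bullet should simply say that $a'\le 0$ on every piece.
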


\begin{proof}
Set $r:=|z|_\varepsilon$.  Since
$$
  \nabla r=\frac{z}{r},  \qquad \nabla^2 r=\frac{1}{r} I -\frac{1}{r^3}z\otimes z,
$$
the chain rule gives \eqref{eq:chain-rule}. We next prove the second-order estimate.  For $z\ne0$, write
$$
  \xi=\xi_{\mathrm T}+\xi_{\mathrm R},
  \qquad
  \xi_{\mathrm R}:=\frac{\xi\cdot z}{|z|^2}z,
  \qquad
  \xi_{\mathrm T}:=\xi-\xi_{\mathrm R}.
$$
Thus $\xi_{\mathrm T}$ is orthogonal to $z$ and $\xi_{\mathrm R}$ is parallel to $z$.
We consider three cases:
\begin{enumerate}
\item If $0<r<1$, then
$$
  a'(r)=-\frac1r,
  \qquad
  a''(r)=\frac1{r^2},
$$
and hence
\begin{equation}
  \nabla^2 a(|z|_\varepsilon)
  =-\frac1{r^2} I +\frac2{r^4}z\otimes z.
  \label{eq:second-derivative-first}
\end{equation}
The coefficients in the tangential and radial directions are, respectively,
$$
  \lambda_{\mathrm T}=-\frac1{r^2},
  \qquad
  \lambda_{\mathrm R}
  =\frac{|z|^2-\varepsilon}{r^4}
  =\frac{2|z|^2}{r^4}-\frac1{r^2}.
$$
Since $r^2=|z|^2+\varepsilon$,
$$
  -\frac1{|z|^2}\le\lambda_{\mathrm T}\le0,
  \qquad
  -\frac1{|z|^2}
  \le-\frac1{r^2}
  \le\lambda_{\mathrm R}
  \le\frac1{r^2}
  \le\frac1{|z|^2}.
$$

\item If $1<r<2$, then
$$
  a'(r)=1-\frac2r,
  \qquad
  a''(r)=\frac2{r^2},
$$
so that
\begin{equation}
  \nabla^2 a(|z|_\varepsilon)
  =\frac{r-2}{r^2} I +\frac{4-r}{r^4}z\otimes z.
  \label{eq:second-derivative-second}
\end{equation}
Here
$$
  -\frac1{r^2}\le\lambda_{\mathrm T}
  =\frac{r-2}{r^2}\le0.
$$
The rank-one term in \eqref{eq:second-derivative-second} is nonnegative; therefore $\lambda_{\mathrm R}\ge\lambda_{\mathrm T}$ and both coefficients are bounded below by $-|z|^{-2}$.  Moreover,
$$
  \lambda_{\mathrm R}
  =\frac{2|z|^2+(r-2)\varepsilon}{r^4}
  \le\frac{2|z|^2}{r^4}
  \le\frac2{|z|^2}.
$$
\item If $r>2$,  $a$ is constant, so both coefficients vanish.
\end{enumerate}

The preceding bounds, together with the orthogonal decomposition of $\xi$, prove \eqref{eq:second-derivative-a}.

It remains to estimate the first derivative.  From
$
  \nabla a(|z|_\varepsilon) =a'(r)\frac{z}{r}
$
and $|a'(r)|\le r^{-1}$ for $0<r<2$, while $a'(r)=0$ for $r>2$, we obtain
\[
  |\nabla a(|z|_\varepsilon)|
  \le\frac{|z|}{r^2}
  \le\frac1{|z|},
\]
which is \eqref{eq:gradient-a}.

Finally,
\[
  a'(1^-)=a'(1^+)=-1,
  \qquad
  a'(2^-)=a'(2^+)=0.
\]
For $\varepsilon>0$, the continuity of $a'$ at $1$ and $2$ implies that
$a \in C^{1,1}_{\mathrm{loc}}(\mathbb{R}^d)$. Therefore, no surface term is supported on $\{r=1\}$ or
$\{r=2\}$.  
\end{proof}

The previous lemma yields:

\begin{lemma}
For every smooth $f$, we have, everywhere outside of the collision set:
\begin{equation}
  (\nabla^2\log\varphi_\varepsilon\,\nabla f)\cdot\nabla f
  \le\frac{2\nu}{N}\ipair
      \frac{|\nabla_i f-\nabla_j f|^2}{|y^i-y^j|^2}
  \le\frac{4\nu}{N}\opair
      \frac{|\nabla_i f|^2}{|y^i-y^j|^2},
  \label{eq:logphi-second-order}
\end{equation}
and
\begin{equation}
  -\Delta\log\varphi_\varepsilon
  \le\frac{2d\nu}{N}\ipair\frac1{|y^i-y^j|^2}.
  \label{eq:logphi-laplacian}
\end{equation}
For each $i\ne j$,
\begin{equation}
\begin{aligned}
  -&(\nabla_i-\nabla_j)\cdot(\nabla_i-\nabla_j)\log\varphi_\varepsilon\\
  &\le\frac{d\nu}{N}\left[
    \frac4{|y^i-y^j|^2}
    +\sum_{k\ne i,j}\left(
      \frac1{|y^i-y^k|^2}+\frac1{|y^j-y^k|^2}
    \right)
  \right].
\end{aligned}
  \label{eq:logphi-directional}
\end{equation}
\end{lemma}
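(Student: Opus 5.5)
We will argue exactly as in Lemma~\ref{prop:logpsi}, now with the pair function $a$ in place of $-\log|\cdot|_\varepsilon$ and with Lemma~\ref{lem:interaction} in place of Lemma~\ref{lem:psi-interaction}. By definition,
\[
 \log\varphi_\varepsilon(y)=\frac{\nu}{N}\sum_{i<j}a(|y^i-y^j|_\varepsilon),
\]
so every second-order quantity reduces to the Hessian $\nabla_z^2a(|z|_\varepsilon)$ evaluated at $z=y^i-y^j$. The points with $|z|_\varepsilon\in\{1,2\}$ form a null set. Off this set, \eqref{eq:chain-rule} holds pointwise, and by the $C^{1,1}_{\mathrm{loc}}$ remark at the end of the proof of Lemma~\ref{lem:interaction} there are no singular parts. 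The inequalities therefore hold almost everywhere and in the distributional sense, which is all that is needed when they are later integrated against weights.

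\emph{First bound \eqref{eq:logphi-second-order}.} By the chain rule,
\[
 (\nabla^2\log\varphi_\varepsilon\nabla f)\cdot\nabla f=\frac{\nu}{N}\sum_{i<j}(\nabla_if-\nabla_jf)\cdot\nabla_z^2a\,(\nabla_if-\nabla_jf),
\]
exactly as in the proof of \eqref{eq:full-hessian}. Applying the upper bound $2|\xi|^2/|z|^2$ from \eqref{eq:second-derivative-a} with $\xi=\nabla_if-\nabla_jf$ gives the first inequality. The factor $2$ compared with \eqref{eq:full-hessian} comes from the radial eigenvalue in the region $1<r<2$. The second inequality follows from $|\nabla_if-\nabla_jf|^2\le2|\nabla_if|^2+2|\nabla_jf|^2$ together with the symmetrization identity used in Lemma~\ref{prop:logpsi}.

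\emph{Laplacian bound \eqref{eq:logphi-laplacian}.} We have
\[
 \Delta\log\varphi_\varepsilon=\frac{2\nu}{N}\sum_{i<j}\operatorname{tr}\nabla_z^2a.
\]
Taking the trace of the lower bound $-|\xi|^2/|z|^2$ in \eqref{eq:second-derivative-a} gives $\operatorname{tr}\nabla_z^2a\ge -d/|z|^2$. This yields $-\Delta\log\varphi_\varepsilon\le\frac{2d\nu}{N}\sum_{i<j}|y^i-y^j|^{-2}$.

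\emph{Directional bound \eqref{eq:logphi-directional}.} Only the pairs $(i,j)$, $(i,k)$ and $(j,k)$ with $k\ne i,j$ contribute. For the pair $(i,j)$, the operator $(\nabla_i-\nabla_j)\cdot(\nabla_i-\nabla_j)$ equals $4\Delta_z$, so it gives $4\operatorname{tr}\nabla_z^2a\ge-4d/|y^i-y^j|^2$. For the pairs $(i,k)$ and $(j,k)$, only $\Delta_i$, respectively $\Delta_j$, survives, and each gives a lower bound $-d/|\cdot|^2$. Multiplying by $-\nu/N$ and summing gives \eqref{eq:logphi-directional}.

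The only point requiring care, and the expected main obstacle, is justifying the use of the piecewise formula across the spheres $r=1$ and $r=2$. We will handle this by observing that $a'$ is continuous there, so $a(|z|_\varepsilon)\in C^{1,1}_{\mathrm{loc}}$ for $\varepsilon>0$. Its distributional Hessian is then the a.e.\ Hessian, and the pointwise bounds of Lemma~\ref{lem:interaction} carry over to all the contracted quantities above.
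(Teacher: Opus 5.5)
Your proposal is correct and follows the paper's route. The paper just says the lemma follows from Lemma~\ref{lem:interaction} exactly as Lemma~\ref{prop:logpsi} follows from Lemma~\ref{lem:psi-interaction}: the same pair decomposition, the upper bound $2|\xi|^2/|z|^2$ for the quadratic form, and traces of the lower bound $-|\xi|^2/|z|^2$ for the Laplacian and directional terms. Your qualification that the inequalities hold almost everywhere and distributionally (the Hessian of $a(|z|_\varepsilon)$ jumps across $|z|_\varepsilon\in\{1,2\}$, with no singular part because $a'$ is continuous there) is in fact the precise form of the paper's ``everywhere outside of the collision set.''
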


The rest of the proof repeats the proof of Theorem \ref{thm:full-gap}.

\subsection{The auxiliary heat kernel}

 Our goal is to estimate the approximating heat kernels $p_\varepsilon(t,x,y)$ from below  with constants independent of $\varepsilon>0$.
The bounds on the limiting heat kernel $p(t,x,y)$ will follow using the semigroup convergence \eqref{lim_exists} and the Lebesgue differentiation theorem. 

It will be convenient to introduce drifts corresponding to weights $\varphi_{s}$, $\varphi_{s,\varepsilon}$:
\begin{equation}
\label{b_var}
\bar{b}(y)=-\frac{\nabla \varphi_{s}}{\varphi_{s}}, \quad
\bar{b}_\varepsilon(y)=-\frac{\nabla \varphi_{s,\varepsilon}}{\varphi_{s,\varepsilon}},
\end{equation}
which will appear in the intermediate calculations. These drifts depend on $s$.

Put $\varphi=\varphi_{s,\varepsilon}$.
Let
$A$ be the self-adjoint operator in $L^2_\varphi := L^2(\mathbb R^{dN},\varphi(y)\,dy)$ associated with
\begin{equation} 
 a[u,w]:=\langle\nabla u,\nabla w\rangle_\varphi,
 \qquad D(a)=W^{1,2}(\mathbb R^{dN}).
 \label{form}
\end{equation}
On $C_c^\infty(\mathbb R^{dN})$,
\begin{equation}
 A=-\varphi^{-1}\operatorname{div}(\varphi\nabla)
   =(-\nabla+\bar b_\varepsilon)\cdot\nabla.
 \label{A_def}
\end{equation}
We denote the kernel of $e^{-tA}$ with respect to
$\varphi(y)\,dy$ by $q_{s,\varepsilon}(t,x,y)$. Let us note that $q_{s,\varepsilon}(t,x,y)\varphi_{s,\varepsilon}(y)$ is the integral kernel of $e^{-tA}$ with respect to the Lebesgue measure.

We now compare $q_\varepsilon \varphi$ to $p_\varepsilon$, the heat kernel of $e^{-t\Lambda_\varepsilon}$, $\Lambda_\varepsilon=(-\nabla + b_\varepsilon) \cdot \nabla$. Notice that $A$ and $\Lambda_\varepsilon$ differ by $h_\varepsilon \cdot \nabla$, where 
$$
 h_\varepsilon:=b_\varepsilon-\bar b_\varepsilon
$$
is a bounded drift, as the next lemma shows.

\begin{lemma}[Comparing heat kernels $q$ and $p$]
\label{lem:gaussian-transfer}
We have, uniformly in $s,\varepsilon>0$,
\begin{equation}
 \|h_\varepsilon\|_\infty\leq \frac{C}{\sqrt s}.
 \label{bb_est}
\end{equation}
Let $M_0>0$ and $0<t\leq M_0s$. The following are true:

\begin{enumerate}[label=(\alph*)]
\item If, at time $t$,
\[
 q_{s,\varepsilon}(t,x,y)
 \leq K_0t^{-dN/2}
 \exp\left(-\frac{|x-y|^2}{K_1t}\right),
\]
then, after changing the constants in the Gaussian,
\begin{equation}
 p_\varepsilon(t,x,y)
 \leq K_2t^{-dN/2}
 \exp\left(-\frac{|x-y|^2}{K_3t}\right)
 \varphi_{s,\varepsilon}(y).
 \label{eq:gaussian-transfer-upper}
\end{equation}
Here $K_2,K_3$ depend on $M_0,K_0,K_1,d,N,$ and $\nu$, but not on
$s$ or $\varepsilon$.

\item If, for some $\alpha,c_0>0$,
\[
 q_{s,\varepsilon}(t,x,y)\geq c_0t^{-dN/2},
 \qquad |x-y|\leq \alpha\sqrt t,
\]
then
\begin{equation}
 p_\varepsilon(t,x,y)
 \geq c_1t^{-dN/2}\varphi_{s,\varepsilon}(y),
 \qquad |x-y|\leq \alpha\sqrt t,
 \label{eq:gaussian-transfer-lower}
\end{equation}
where $c_1>0$ depends on $M_0,\alpha,c_0,d,N,$ and $\nu$, but not on
$s$ or $\varepsilon$.
\end{enumerate}
\end{lemma}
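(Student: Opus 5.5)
First I bound $h_\varepsilon$ explicitly. Both drifts are sums of pair contributions: $b_\varepsilon=-\nabla\log\psi_\varepsilon$ with pair term $\frac{\nu}{N}\nabla_z\log|z|_\varepsilon$, and $\bar b_\varepsilon=-\nabla\log\varphi_{s,\varepsilon}$ with pair term $-\frac{\nu}{N}\nabla_z a(s^{-1/2}|z|_\varepsilon)$, where $z=y^i-y^j$. For $s^{-1/2}|z|_\varepsilon<1$ we have $a(r)=-\log r$, so the two pair terms cancel exactly: $-\log(s^{-1/2}|z|_\varepsilon)=-\log|z|_\varepsilon+\tfrac12\log s$, and the constant disappears under the gradient. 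In the remaining region $|z|_\varepsilon\ge\sqrt s$ each term is bounded separately. The logarithmic term satisfies $|z|/|z|_\varepsilon^2\le 1/|z|_\varepsilon\le s^{-1/2}$. The cutoff term satisfies $|a'(r)|\le1/r\le1$ with $r=s^{-1/2}|z|_\varepsilon$, and the chain rule contributes $s^{-1/2}|z|/|z|_\varepsilon\le s^{-1/2}$. Summing over the pairs containing particle $i$ gives $|h^i_\varepsilon|\le 2\nu(N-1)N^{-1}s^{-1/2}$, hence $\|h_\varepsilon\|_\infty\le C/\sqrt s$ with $C$ depending only on $\nu,N$. This proves \eqref{bb_est}.

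To transfer the kernel bounds, I write $\Lambda_\varepsilon=A+h_\varepsilon\cdot\nabla$ and rescale time by $s$. In the rescaled variables the perturbing drift has norm $O(1)$ and $t\le M_0 s$ becomes a bounded time. Let $P_t=e^{-t\Lambda_\varepsilon}$ and $Q_t=e^{-tA}$, with Lebesgue kernels $p_\varepsilon$ and $q_{s,\varepsilon}\varphi$. The plan is a Girsanov comparison of the two diffusions, both of which have smooth coefficients for $\varepsilon>0$. The $\Lambda_\varepsilon$-process is the $A$-process with the extra drift $-h_\varepsilon$. The Radon--Nikodym density on $[0,t]$ is $Z_t=\exp\bigl(-\int_0^t h_\varepsilon\cdot dB/\sqrt2-\tfrac14\int_0^t|h_\varepsilon|^2\bigr)$, and since $\|h_\varepsilon\|^2_\infty t\le C^2M_0$, all its moments are bounded uniformly in $s$ and $\varepsilon$.

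\emph{Upper bound (a).} By Hölder, $P_t\mathbf 1_B(x)\le \bigl(Q_t\mathbf 1_B(x)\bigr)^{1/p'}\,\|Z_t\|_{L^p}$. This alone loses a power and does not give a pointwise density bound. I would therefore first establish the bound at time $t/2$, then use Chapman--Kolmogorov and the weighted structure. Alternatively, I would run Duhamel iteration: $p=q\varphi+\int_0^t\int p(\tau)\,h_\varepsilon\cdot\nabla_y(q\varphi)(t-\tau)$. This needs a gradient bound on $q$, which I do not have. So Girsanov plus Hölder, with a Gaussian-tail argument for sets $B$ far from $x$ and the Lebesgue differentiation theorem, seems the safer route. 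The Gaussian factor survives after enlarging $K_3$, since $Q_t\mathbf 1_B$ carries $\exp(-\mathrm{dist}^2/K_1t)$ and the $1/p'$ power only changes that constant. The factor $\varphi(y)$ survives because $\varphi$ is comparable to constants on small balls only up to an $N$-dependent ratio, which is acceptable here. One remaining catch is that the Hölder power $1/p'$ applies to $\varphi(B)$ rather than to $\varphi(y)|B|$.

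\emph{Lower bound (b).} I would use the reverse Hölder/Jensen direction: $Q_t\mathbf 1_B(x)=E^{\Lambda}[Z_t^{-1}\mathbf 1_B]\le (P_t\mathbf 1_B)^{1/2}\,(E^\Lambda Z_t^{-2})^{1/2}$. This gives $P_t\mathbf 1_B(x)\ge c\,(Q_t\mathbf 1_B(x))^2$, which again loses a power. The fix is to apply it on balls $B$ of radius $\sqrt t$ first, where $Q_t\mathbf 1_B\ge c_0t^{-dN/2}\varphi(B)\ge c$ by Lemma~\ref{lem:mass2}-type mass bounds. This yields a lower bound for $P_t$ of sets of order-one probability. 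A further Chapman--Kolmogorov step at time $t/2$, combined with a local parabolic Harnack-type comparison for $p_\varepsilon/\varphi$, then produces the pointwise bound $c_1t^{-dN/2}\varphi(y)$.

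The main obstacle is exactly this upgrade from set-probabilities, which the Girsanov comparison controls only up to a power, to pointwise kernel bounds that carry the correct weight $\varphi_{s,\varepsilon}(y)$. I would attempt it by working with $P_t^*$, whose invariant density is $\psi_\varepsilon$ and which is comparable to $\varphi$ at scale $\sqrt s$. I would then exploit duality, $p_\varepsilon(t,x,y)=p^*_\varepsilon(t,y,x)$, to move the weight onto the correct variable.
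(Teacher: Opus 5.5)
\textbf{There is a genuine gap: the heat kernel transfer in (a) and (b) is not proved.} Your bound $\|h_\varepsilon\|_\infty\le C/\sqrt s$ is correct and is essentially the paper's: the pair terms cancel where $s^{-1/2}|z|_\varepsilon<1$ and are each $O(s^{-1/2})$ elsewhere. Your Girsanov density is also set up correctly. The trouble is that you treat $\int_0^t h_\varepsilon\cdot dB$ as a generic bounded-drift martingale. That only yields $P_t\mathbf{1}_B(x)\le C\,(Q_t\mathbf{1}_B(x))^{1/p'}$ and $P_t\mathbf{1}_B(x)\ge c\,(Q_t\mathbf{1}_B(x))^2$, and none of your proposed upgrades removes the lost power:
\begin{itemize}
\item For $B=B_\delta(y)$, the upper bound divided by $|B|$ is of order $|B|^{1/p'-1}\to\infty$, so Lebesgue differentiation gives no pointwise information.
\item Chapman--Kolmogorov needs a pointwise bound on one factor, which is exactly what is being proved.
\item A parabolic Harnack inequality for $p_\varepsilon/\varphi$, uniform in $\varepsilon$, is not available and is at least as hard as the two-sided bounds themselves.
\end{itemize}
Your remark that $\varphi$ is comparable to constants on small balls is also false uniformly in $\varepsilon$. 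The weight $\varphi_{s,\varepsilon}$ equals $(s/\varepsilon)^{\nu(N-1)/4}$ at a total collision and equals $c_\varphi$ once all pairs are $2\sqrt s$ apart. So the singular factor $\varphi_{s,\varepsilon}(y)$, which is the whole content of the weighted bound, cannot be recovered from probabilities of balls.

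\textbf{The missing idea is that $h_\varepsilon=\nabla R$ with $R:=\log(\varphi_{s,\varepsilon}/\psi_\varepsilon)$, which makes the Girsanov density pathwise bounded.} Along the $A$-diffusion $dX=\nabla\log\varphi\,du+\sqrt2\,dB$, It\^o's formula gives $dR(X_u)=-AR(X_u)\,du+\sqrt2\,\nabla R(X_u)\cdot dB_u$. Hence
\[
 Z_t=\exp\Bigl(-\tfrac12\bigl(R(X_t)-R(X_0)\bigr)-\int_0^t V(X_u)\,du\Bigr),
 \qquad V:=\tfrac14|\nabla R|^2+\tfrac12 AR .
\]
This is the Feynman--Kac form of the paper's gauge identity $U\Lambda_\varepsilon U^{-1}=A+V$ with $Uf=(\psi/\varphi)^{1/2}f$. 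It gives pointwise
\[
 p_\varepsilon(t,x,y)=e^{-(R(y)-R(x))/2}\varphi(y)\,k_V(t,x,y),
 \qquad e^{-t\|V\|_\infty}q\le k_V\le e^{t\|V\|_\infty}q,
\]
so no power is lost. The real work is proving $\|V\|_\infty\le C/s$.
\begin{itemize}
\item You need $\|\nabla^2R\|_\infty\le C/s$, which follows from $|g''|\le 1$.
\item You also need $\|\nabla\log\varphi\cdot\nabla R\|_\infty\le C/s$, which is not automatic. Near collisions $\nabla_i\log\varphi$ contains terms of size $|x^i-x^j|^{-1}$ uniformly in $\varepsilon$, while $\nabla_i R$ need not vanish there because a third particle may be far away.
\item The paper handles this by symmetrizing: $\nabla\log\varphi\cdot\nabla R=\sum_{i<j}G_{ij}\cdot(\nabla_iR-\nabla_jR)$. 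It then uses $|G_{ij}|\,|x^i-x^j|\le\nu/N$ together with $|\nabla_iR-\nabla_jR|\le C|x^i-x^j|/s$, which comes from permutation invariance of $R$ and the Lipschitz bound on $\nabla R$.
\end{itemize}
Finally, $|R(y)-R(x)|\le C|x-y|/\sqrt s$ produces a factor $e^{\pm C|x-y|/\sqrt s}$. In (a) it is absorbed into the Gaussian via $C|x-y|/\sqrt s\le\delta|x-y|^2/t+C^2t/(4\delta s)$ with $t/s\le M_0$. In (b) it is bounded because $|x-y|\le\alpha\sqrt t$ and $t\le M_0 s$.
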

\begin{proof}

The operator-theoretic argument given below is a bit of overkill since we need this lemma at the a priori level, so all heat kernels that appear in the proof are smooth, and we can appeal instead to the classical theory and the Feynman-Kac formula. We opt to work in the more general setting to give an argument that, in principle, also works directly for $\varepsilon=0$.

Recall that
$$
\varphi(x) \equiv \varphi_{s,\varepsilon}=\prod_{1\le i<j\le N}\eta(s^{-\frac{1}{2}}|x^i-x^j|_\varepsilon) =e^{\frac{\nu}{N}\sum_{1\le i<j\le N}a(s^{-\frac{1}{2}}|x^i-x^j|_\varepsilon)}.
$$
In turn,
$$
\psi(x) \equiv \psi_{s,\varepsilon}=\prod_{1\le i<j\le N}(s^{-\frac{1}{2}}|x^i-x^j|_\varepsilon)^{-\frac{\nu}{N}}=e^{-\frac{\nu}{N}\sum_{i<j}\log(s^{-\frac{1}{2}}|x^i-x^j|_\varepsilon)}.
$$
Below the constants denoted by $C$ may change from line to line and depend on
$d,N$, and $\nu$, but not on $s$ or $\varepsilon$.

We divide the proof into three steps.

\smallskip
\noindent \emph{Step 1} (Proof of the estimate \eqref{bb_est} on the difference of the two drifts).
We have
\[
 b_\varepsilon=-\nabla\log\psi,
 \qquad
 \bar b_\varepsilon=-\nabla\log\varphi,
\]
and therefore
\[
 h_\varepsilon
 =b_\varepsilon-\bar b_\varepsilon
 =\nabla\log(\varphi/\psi)
 =:\nabla R.
\]
Thus, to prove \eqref{bb_est}, we need to estimate $\|\nabla R\|_\infty$.

It follows from the definitions of $\varphi$ and $\psi$ that $ R=\log\frac{\varphi}{\psi}$ admits the representation
\begin{equation}
 R(x)=\frac{\nu}{N}\sum_{i<j}
 g\left(s^{-1/2}|x^i-x^j|_\varepsilon\right), \qquad  g(r):=a(r)+\log r.
 \label{eq:R-pair-sum}
\end{equation}
By the definition of $a$ in \eqref{eta_def},
\[
 g(r)=
 \begin{cases}
 0, &0<r<1,\\
 r-1-\log r, &1\leq r\leq2,\\
 1-2\log2+\log r, &r>2.
 \end{cases}
\]
Therefore,
\[
 g'(r)=
 \begin{cases}
 0, &0<r<1,\\
 1-r^{-1}, &1<r<2,\\
 r^{-1}, &r>2,
 \end{cases}
\]
so $g'$ is continuous and globally Lipschitz, 
and
\begin{equation}
 \|g'\|_\infty\leq\frac12,
 \qquad
 |g''(r)|\leq1
 \quad\text{for a.e. }r>0.
 \label{eq:g-derivative-bounds}
\end{equation}

For $z\in\mathbb R^d$, put
\[
 r:=s^{-1/2}|z|_\varepsilon, \qquad
 F_{s,\varepsilon}(z):=g(r).
\]
Then
\[
 \nabla_z r=\frac{1}{\sqrt s}\frac{z}{|z|_\varepsilon},
 \qquad
 \nabla_z^2r
 =\frac{1}{\sqrt s\,|z|_\varepsilon}
   \bigl(I-\frac{z \otimes z}{|z|^2_\varepsilon}\bigr).
\]
Thus, almost everywhere,
\[
 \nabla_z^2F_{s,\varepsilon}
 =
 \frac1s g''(r)\frac{z \otimes z}{|z|^2_\varepsilon}
 +
 \frac{g'(r)}{\sqrt s\,|z|_\varepsilon}
 \bigl(I-\frac{z \otimes z}{|z|^2_\varepsilon}\bigr).
\]
If $r<1$, then $g'(r)=g''(r)=0$. If $r\geq1$, then
$|z|_\varepsilon\geq\sqrt s$. It follows from
\eqref{eq:g-derivative-bounds} that
\[
 |\nabla F_{s,\varepsilon}(z)|
 \leq\frac{C}{\sqrt s},
 \qquad
 |\nabla^2F_{s,\varepsilon}(z)|
 \leq\frac Cs.
\]
Since $g'$ is continuous at $1$ and $2$, these estimates also hold
for the weak derivatives. Summing the finitely many pair contributions in
\eqref{eq:R-pair-sum}, we obtain
\begin{equation}
 \|\nabla R\|_\infty\leq\frac{C}{\sqrt s},
 \qquad
 \|\nabla^2R\|_\infty\leq\frac Cs.
 \label{eq:R-derivative-bounds}
\end{equation}
We do not need the estimate on the second derivatives of $R$ in the proof of \eqref{bb_est}, but we will use  it later in the proof.

\begin{remark}
Alternatively, we can obtain the first estimate as follows. Put
$
 r_{ij}:=s^{-1/2}|x^i-x^j|_\varepsilon.
$
Then
\[
 \nabla_iR
 =
 \frac{\nu}{N\sqrt s}
 \sum_{j\ne i}
 g'(r_{ij})
 \frac{x^i-x^j}{|x^i-x^j|_\varepsilon}.
\]
Since $\|g'\|_\infty\leq1/2$ and
$|x^i-x^j|\leq|x^i-x^j|_\varepsilon$, we have
$
 |\nabla_iR|
 \leq\frac{\nu(N-1)}{2N\sqrt s}.
$
Therefore,
\[
 \|\nabla R\|_\infty
 \leq
 \left(\sum_{i=1}^N\|\nabla_iR\|_\infty^2\right)^{1/2}
 \leq\frac{\nu(N-1)}{2\sqrt N\,\sqrt s}.
\]
\end{remark}

The first estimate in \eqref{eq:R-derivative-bounds} proves
\eqref{bb_est}.

\smallskip
\noindent
\emph{Step 2} (Preliminary heat kernel comparison).
This is the principal step. Our goal is to obtain a two-sided estimate
\begin{equation}
\begin{aligned}
 &\exp\left\{-C\left(\frac ts+
              \frac{|x-y|}{\sqrt s}\right)\right\}
 q_{s,\varepsilon}(t,x,y)\varphi_{s,\varepsilon}(y)
 \leq p_\varepsilon(t,x,y)\\
 &\qquad\leq
 \exp\left\{C\left(\frac ts+
              \frac{|x-y|}{\sqrt s}\right)\right\}
 q_{s,\varepsilon}(t,x,y)\varphi_{s,\varepsilon}(y).
\end{aligned}
 \label{eq:basic-gaussian-transfer}
\end{equation}
The exponential factors will be absorbed in the Gaussian densities in the final step (Step 3).

To establish \eqref{eq:basic-gaussian-transfer}, we will use the Lie-Trotter product formula and the comparison between the generators of these heat kernels, i.e.\,$$A=A_\varepsilon=-\Delta-\nabla\log\varphi\cdot\nabla$$ and $\Lambda_\varepsilon$, respectively. 

1.~We will need to estimate $\|AR\|_\infty$, where, recall,
$\nabla R=h_\varepsilon=b_\varepsilon-\bar b_\varepsilon$, i.e.\,exactly the difference between the drifts in $\Lambda_\varepsilon$ and $A$.
In turn, to estimate $AR$, we must control
$\nabla\log\varphi\cdot\nabla R$. This requires taking into account some cancellation
since $\nabla\log\varphi$ is not bounded uniformly in $\varepsilon$:

For $i\ne j$, denote the contribution of the pair $(i,j)$ to
$\nabla_i\log\varphi$ by
\[
 G_{ij}
 :=
 \frac{\nu}{N\sqrt s}\,
 a'(r_{ij})
 \frac{x^i-x^j}{|x^i-x^j|_\varepsilon}.
\]
Then
 $G_{ji}=-G_{ij}$,
$\nabla_i\log\varphi=\sum_{j\ne i}G_{ij}$.
The definition of $a$ gives
$
 |r a'(r)|\leq 1$.
It follows that, for $r_{ij}:=s^{-1/2}|x^i-x^j|_\varepsilon$,
\begin{align}
 |G_{ij}|\,|x^i-x^j|
 &=
 \frac{\nu}{N}|r_{ij}a'(r_{ij})|
 \frac{|x^i-x^j|^2}{|x^i-x^j|_\varepsilon^2}
 \notag\\
 &\leq\frac{\nu}{N}.
 \label{eq:Gij-bound}
\end{align}

Next, put
\[
 H_i:=\nabla_iR
\]
and denote by $\widehat x$ the configuration obtained by replacing in $x$ both
$x^i$ and $x^j$ by their average
\[
 \widehat x^i=\widehat x^j:=\frac{x^i+x^j}{2}
\]
and leaving all other particle coordinates unchanged. Since $R$ is
invariant under permutations of the particle labels and
$\widehat x^i=\widehat x^j$, we have
\[
 H_i(\widehat x)=H_j(\widehat x).
\]
The second derivatives bound in \eqref{eq:R-derivative-bounds} and the mean value theorem therefore give
\begin{align}
 |H_i(x)-H_j(x)|
 &\leq
 |H_i(x)-H_i(\widehat x)|
 +|H_j(\widehat x)-H_j(x)|
 \notag\\
 &\leq\frac Cs|x-\widehat x|
 \leq\frac Cs|x^i-x^j|.
 \label{eq:Hij-bound}
\end{align}

We have
\begin{align*}
 \nabla\log\varphi\cdot\nabla R
 &=
 \sum_{i=1}^N\sum_{j\ne i}G_{ij}\cdot H_i\\
 &=
 \sum_{i<j}G_{ij}\cdot(H_i-H_j).
\end{align*}
Combining \eqref{eq:Gij-bound} and \eqref{eq:Hij-bound} yields
\[
 \left\|
 \nabla\log\varphi\cdot\nabla R
 \right\|_\infty
 \leq\frac Cs,
\]
where we have used the fact that $|x^i-x^j|$ coming from the bound on $H_i-H_j$ is what is needed to cancel the singularity in $G_{ij}$, see the previous estimate.

Since $AR=-\Delta R-\nabla\log\varphi\cdot\nabla R$,
the second derivatives estimate in \eqref{eq:R-derivative-bounds} now gives
\[
 \|AR\|_\infty\leq\frac Cs.
\]

2.~Set
\[
 V:=\frac14|\nabla R|^2+\frac12AR.
\]
By the previous estimates, we have
\begin{equation}
 \|V\|_\infty\leq\frac Cs.
 \label{eq:gauge-potential-bound}
\end{equation}

3.~Define  the unitary map
\[
 U:L^2(\mathbb R^{dN},\psi\,dx)
   \rightarrow L^2(\mathbb R^{dN},\varphi\,dx),
 \qquad
Uf=\left(\frac{\psi}{\varphi}\right)^{1/2} f.
\]
The operator $\Lambda_\varepsilon$ is a self-adjoint operator in $L^2(\mathbb R^{dN},\psi\,dx)$, the operator $A$ is a self-adjoint operator in $L^2(\mathbb R^{dN},\varphi\,dx)$. They have sesquilinear forms $\ell [u,v]=\langle \nabla u,\nabla v\rangle_\psi$ and $a[f,g]=\langle \nabla f,\nabla g\rangle_\varphi$ acting in different spaces. Let us compare the quadratic forms of $U\Lambda_\varepsilon U^{-1}$ and $A+V$ that are defined on the same space: for real-valued
$f\in C_c^\infty(\mathbb R^{dN})$,
\begin{align*}
 \ell[U^{-1}f,U^{-1}f]= 
 \int_{\mathbb R^{dN}}
 \left|\nabla(\left(\frac{\psi}{\varphi}\right)^{-1/2}f)\right|^2\psi\,dx 
 & =
 \int_{\mathbb R^{dN}}
 \left|\nabla f+\frac12f\nabla R\right|^2\varphi\,dx\\
 &=
 \int_{\mathbb R^{dN}}|\nabla f|^2\varphi\,dx
 +\int_{\mathbb R^{dN}}
   f\nabla f\cdot\nabla R\,\varphi\,dx\\
 &\quad
 +\frac14\int_{\mathbb R^{dN}}
   f^2|\nabla R|^2\varphi\,dx.
\end{align*}
By the definition of $A$ and the integration by parts,
\[
 \int_{\mathbb R^{dN}}
 f\nabla f\cdot\nabla R\,\varphi\,dx
 =
 \frac12\int_{\mathbb R^{dN}}f^2AR\,\varphi\,dx,
\]
so
$
 \ell[U^{-1}f,U^{-1}f]=a[f,f]+\langle Vf,f\rangle_\varphi.
$
By the polarization identity, the corresponding sesquilinear forms coincide:
$$
 \ell[U^{-1}f,U^{-1}g]=a[f,g]+\langle Vf,g\rangle_\varphi.
 $$
Invoking the definition of $U$, one sees that $\ell[U^{-1}f,U^{-1}g]=\langle U\Lambda_\varepsilon U^{-1}f,g\rangle_\varphi$.
This identity extends to the domains of the operators involved, which yields
\begin{equation}
 U\Lambda_\varepsilon U^{-1}=A+V.
 \label{eq:gauge-conjugation}
\end{equation}

4.~Since $e^{-tA}$ is positivity preserving, the Lie-Trotter product formula
and \eqref{eq:gauge-potential-bound} give, for every nonnegative $f$,
\begin{equation}
 e^{-Ct/s}e^{-tA}f
 \leq e^{-t(A+V)}f
 \leq e^{Ct/s}e^{-tA}f.
 \label{eq:bounded-potential-domination}
\end{equation}

5.~Let $k_V(t,x,y)$ denote the kernel of $e^{-t(A+V)}$ with respect to
$\varphi(y)\,dy$. Since $q_{s,\varepsilon}$
is the kernel of $e^{-tA}$ with respect to the same measure,
\eqref{eq:bounded-potential-domination} implies
\begin{equation}
 e^{-Ct/s}q_{s,\varepsilon}(t,x,y)
 \leq k_V(t,x,y)
 \leq e^{Ct/s}q_{s,\varepsilon}(t,x,y).
 \label{eq:kV-q-comparison}
\end{equation}
On the other hand, \eqref{eq:gauge-conjugation} gives
\begin{align*}
 e^{-t(A+V)}f(x)
 &=
 \rho(x)e^{-t\Lambda_\varepsilon}(\rho^{-1}f)(x), \qquad \quad \rho(x):=\left(\frac{\psi(x)}{\varphi(x)}\right)^{1/2}\\
 &=
 \rho(x)\int_{\mathbb R^{dN}}
 p_\varepsilon(t,x,y)\rho(y)^{-1}f(y)\,dy.
\end{align*}
Hence
\begin{equation}
 k_V(t,x,y)
 =
 \frac{\rho(x)}{\rho(y)\varphi(y)}
 p_\varepsilon(t,x,y),
 \qquad
 p_\varepsilon(t,x,y)
 =
 \frac{\rho(y)}{\rho(x)}
 \varphi(y)k_V(t,x,y).
 \label{eq:kV-p-relation}
\end{equation}
Furthermore, \eqref{eq:R-derivative-bounds} gives
\[
 |\log\rho(y)-\log\rho(x)|
 =\frac12|R(y)-R(x)|
 \leq C\frac{|x-y|}{\sqrt s}.
\]
Combining this estimate with
\eqref{eq:kV-q-comparison} and \eqref{eq:kV-p-relation}, we obtain \eqref{eq:basic-gaussian-transfer}.

\smallskip
\noindent
\emph{Step 3} (Exponential factors -- to Gaussians).
Suppose first that the upper bound in part~\textup{(a)} holds. The
upper estimate in \eqref{eq:basic-gaussian-transfer} gives
\[
 p_\varepsilon(t,x,y)
 \leq K_0t^{-dN/2}\varphi_{s,\varepsilon}(y)
 \exp\left\{
 -\frac{|x-y|^2}{K_1t}
 +C\frac{|x-y|}{\sqrt s}
 +C\frac ts
 \right\}.
\]
For every $\delta>0$, the quadratic inequality gives
\begin{equation}
 C\frac{|x-y|}{\sqrt s}
 \leq
 \delta\frac{|x-y|^2}{t}
 +\frac{C^2}{4\delta}\frac ts.
 \label{eq:linear-gaussian-absorption}
\end{equation}
Choose $\delta=(2K_1)^{-1}$. Since \(t/s\leq M_0\),
\[
 -\frac{|x-y|^2}{K_1t}
 +\delta\frac{|x-y|^2}{t}
 =
 -\frac{|x-y|^2}{2K_1t},
\]
and all remaining exponential terms are bounded by a constant
depending only on $M_0,K_1,d,N$, and $\nu$. Thus
\[
 p_\varepsilon(t,x,y)
 \leq K_2t^{-dN/2}
 \exp\left(-\frac{|x-y|^2}{2K_1t}\right)
 \varphi_{s,\varepsilon}(y).
\]
This proves part~\textup{(a)}, for example with $K_3=2K_1$.

Finally, suppose that the near-diagonal lower bound in
part~\textup{(b)} holds. If
\[
 |x-y|\leq\alpha\sqrt t,
 \qquad
 t\leq M_0s,
\]
then
\[
 \frac ts\leq M_0,
 \qquad
 \frac{|x-y|}{\sqrt s}
 \leq\alpha\sqrt{\frac ts}
 \leq\alpha\sqrt{M_0}.
\]
The lower estimate in \eqref{eq:basic-gaussian-transfer} therefore
gives
\[
 p_\varepsilon(t,x,y)
 \geq
 c_0\exp\{-C(M_0+\alpha\sqrt{M_0})\}
 t^{-dN/2}\varphi_{s,\varepsilon}(y).
\]
This is \eqref{eq:gaussian-transfer-lower}, with
\[
 c_1
 :=
 c_0\exp\{-C(M_0+\alpha\sqrt{M_0})\}>0,
\]
and proves part~\textup{(b)}. 
\end{proof}

From now on, we fix $s,\varepsilon>0$ and use the abbreviations
$q=q_{s,\varepsilon}$ and $\varphi=\varphi_{s,\varepsilon}$ introduced
above. We will be considering $t$ that lie in the interval
$\frac{s}{2\tau} \leq t \leq \frac{s}{\tau}$, where $\tau \geq 1$ is a
generic constant that will be specified later.  The constants in the
estimates below are independent of $\varepsilon$.

We have the conservation of mass property:
\begin{equation}
\label{mass}
\langle q(t,x,\cdot)\rangle_{\varphi}=1, \quad \text{ for all $t>0$}. 
\end{equation}
In addition, $q(t,x,y)=q(t,y,x)$ and its reproduction formula is
understood with respect to the measure
$
\varphi(y)dy$.

Our goal is to prove
\begin{enumerate}
\item[--] The weighted entropy estimate for $q$, 
\item[--] The estimate on the weighted Nash $G$-function for $q$.
\end{enumerate}
They will yield a lower bound on every ball of radius comparable to
$\sqrt t$, uniformly in the center of that ball.  In Step~2 we will
apply this estimate only with $s$ comparable to the short kernel time.
We will then compose the physical kernel $p_\varepsilon$, whose
definition is independent of $s$.

The following elementary estimate is needed to keep all entropy
constants uniform in the starting point.

\begin{lemma}
\label{lem:exp-convolution}
There is $C<\infty$ such that, for every $s,\theta>0$,
$\varepsilon\ge0$, and $x\in\mathbb R^{dN}$,
\begin{equation}
 \langle  e^{-\theta|x-\cdot|} \rangle_{\varphi}  \leq C\bigl(\theta^{-1}+\sqrt s\bigr)^{dN}.
 \label{eq:exp-convolution}
\end{equation}
\end{lemma}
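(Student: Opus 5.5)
We must bound $\langle e^{-\theta|x-\cdot|}\rangle_{\varphi}=\int e^{-\theta|x-y|}\varphi_{s,\varepsilon}(y)\,dy$. The weight is bounded below by $c_\varphi$ but not above: near collisions it behaves like $\prod |y^i-y^j|^{-\nu/N}$ at scale $\sqrt s$. The plan is to compare the exponential with a Gaussian whose scale matches the larger of the two lengths $\theta^{-1}$ and $\sqrt s$. We then reduce to the mass bound of Lemma~\ref{lem:mass2}, which is uniform in the center and in $\varepsilon$.

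\emph{Step 1: reduction to unit-type scale.} Set $\sigma:=\max(\theta^{-2},s)$, so that $\sigma^{1/2}\le\theta^{-1}+\sqrt s$. Write the exponential as a superposition of Gaussians, or more simply cover $\mathbb R^{dN}$ by shells. I prefer the pointwise bound
\[
 e^{-\theta|z|}\le C_k\,\frac{\theta^{-dN}}{(\theta^{-1}+|z|)^{dN+1}}\cdot\theta^{-1}\le \ldots
\]
This is awkward, so instead decompose $\mathbb R^{dN}$ into cubes $Q_k$ of side $\sigma^{1/2}$ centered at $x+\sigma^{1/2}k$, $k\in\mathbb Z^{dN}$. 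On $Q_k$ we have $e^{-\theta|x-y|}\le e^{-\theta\sigma^{1/2}(|k|-\sqrt{dN})_+}$. Since $\theta\sigma^{1/2}\ge1$, this is at most $e^{-(|k|-\sqrt{dN})_+}$.

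\emph{Step 2: local mass of the weight.} On each cube, $\Gamma_{\sigma,\xi_k}\ge c\,\sigma^{-dN/2}$ with $\xi_k$ the cube's center. Hence
\[
\int_{Q_k}\varphi_{s,\varepsilon}\,dy\le C\sigma^{dN/2}\langle\Gamma_{\sigma,\xi_k}\rangle_{\varphi_{s,\varepsilon}}.
\]
The Gaussian scale $\sigma$ is now different from the weight scale $s$, and this is the main obstacle, since Lemma~\ref{lem:mass2} is stated with matching scales. We handle it using $\sigma\ge s$. Since $a$ is nonincreasing and $\varphi_{s,\varepsilon}(y)=\varphi_{\varepsilon/s}(y/\sqrt s)$, we get $\varphi_{s,\varepsilon}\le\varphi_{\sigma,\varepsilon}\cdot\prod_{i<j}\max(1,\ldots)$. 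This is not quite monotone in the right direction: smaller $s$ makes $\varphi_{s,\varepsilon}$ larger near collisions. So we argue directly, as in the proof of Lemma~\ref{lem:psi-mass}. First, $\eta(r)\le C\max(1,r^{-\nu/N})$, so $\varphi_{s,\varepsilon}(y)\le C\prod_{i<j}\max\bigl(1,(\sqrt s/|y^i-y^j|)^{\nu/N}\bigr)$. Each factor is at most $1+(\sqrt\sigma/|y^i-y^j|)^{\nu/N}$ because $s\le\sigma$. After rescaling $y=\sqrt\sigma u$, the successive-integration/H\"older argument of Lemma~\ref{lem:psi-mass} applies, using $\nu(m-1)/N<d$. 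It yields $\langle\Gamma_{\sigma,\xi}\rangle_{\varphi_{s,\varepsilon}}\le C$ uniformly in $\xi,s,\varepsilon$ and $\sigma\ge s$.

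\emph{Step 3: summation.} Combining the two steps,
\[
\langle e^{-\theta|x-\cdot|}\rangle_\varphi\le C\sigma^{dN/2}\sum_{k\in\mathbb Z^{dN}}e^{-(|k|-\sqrt{dN})_+}\le C\sigma^{dN/2}\le C(\theta^{-1}+\sqrt s)^{dN}.
\]
This proves \eqref{eq:exp-convolution}. The constant depends only on $d,N,\nu$.
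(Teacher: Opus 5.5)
Your proof is correct, but it is organized differently from the paper's.

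\emph{The paper's route.} The paper never leaves the matched scale. It uses Lemma~\ref{lem:mass2} only to get $\langle \mathbf 1_{B_{\sqrt s}(z)}\rangle_\varphi\le Cs^{dN/2}$ uniformly in $z$. It then covers $B_R(x)$ by $C(1+R/\sqrt s)^{dN}$ such balls, which gives the volume-growth bound $\langle\mathbf 1_{B_R(x)}\rangle_\varphi\le C(R+\sqrt s)^{dN}$. Finally it integrates this against $\theta e^{-\theta R}\,dR$ through the layer-cake identity $e^{-\theta|x-y|}=\theta\int_{|x-y|}^\infty e^{-\theta R}\,dR$.

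\emph{Your route.} You tile at the larger scale $\sqrt\sigma=\max(\theta^{-1},\sqrt s)$. The exponential then decays geometrically across cubes, and a single lattice sum finishes the proof. The price is that you need a mass bound for a Gaussian of scale $\sigma$ against the weight of scale $s\le\sigma$.

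\emph{What each buys.} The paper's argument produces the volume-growth estimate as a by-product, which is the natural doubling-type statement. Yours avoids the covering count and the layer-cake step.

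\emph{A correction.} Your remark that monotonicity goes ``in the wrong direction'' is false. Since $\eta$ is nonincreasing and $s^{-1/2}|y^i-y^j|_\varepsilon$ decreases as $s$ grows, $\varphi_{s,\varepsilon}$ is nondecreasing in $s$. Near collisions it equals $s^{\nu(N-1)/4}\prod_{i<j}|y^i-y^j|_\varepsilon^{-\nu/N}$. The paper uses exactly this fact in the last step of the lower bound, where it writes $\varphi_{\tau r_0,\varepsilon}\ge\varphi_{t,\varepsilon}$. Hence, for $s\le\sigma$,
\[
\langle\Gamma_{\sigma,\xi}\rangle_{\varphi_{s,\varepsilon}}\le\langle\Gamma_{\sigma,\xi}\rangle_{\varphi_{\sigma,\varepsilon}}\le C
\]
directly by Lemma~\ref{lem:mass2}, and what you call the main obstacle disappears in one line.

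Your direct re-derivation is nonetheless valid. It uses $\eta(r)\le\max(1,r^{-\nu/N})$, then $s\le\sigma$, then the rescaling $y=\sqrt\sigma u$, and finally successive H\"older integration with $\nu(m-1)/N<d$. So this is a detour, not a gap. You should still delete the abandoned pointwise bound at the start of Step~1 and the incomplete inequality $\varphi_{s,\varepsilon}\le\varphi_{\sigma,\varepsilon}\cdot\prod\ldots$ in Step~2.
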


\begin{proof}
We first deduce a uniform volume growth estimate from Lemma
\ref{lem:mass2}. For every $z\in\mathbb R^{dN}$,
\[
 \Gamma_{s,z}(y)\geq c_{N,d} s^{-Nd/2},
 \qquad y\in B_{\sqrt s}(z).
\]
Therefore, Lemma \ref{lem:mass2} gives
$$
 \langle  \mathbf{1}_{B_{\sqrt s}(z)}  \rangle_{\varphi} \leq Cs^{Nd/2}  \langle \Gamma_{s,z}(\cdot)  \rangle_{\varphi} \leq Cs^{Nd/2}
$$
uniformly in $s,\varepsilon$, and $z$.

Noting that the ball $B_R(x)$ can be covered by at most $C_{d,N}(1+R/\sqrt s)^{dN}$ balls of radius $\sqrt s$, we obtain
\begin{equation}
\langle  \mathbf{1}_{B_R(x)}  \rangle_{\varphi}
\leq C(R+\sqrt s)^{dN},
\qquad R>0.
\label{eq:weighted-volume-growth}
\end{equation}
uniformly in $x,s$, and $\varepsilon$.

Since  $ e^{-\theta|x-y|} =\theta\int_{|x-y|}^{\infty}e^{-\theta R}\,dR $, using Tonelli's theorem,  we obtain
\begin{align*}
 \langle  e^{-\theta|x-\cdot|} \rangle_{\varphi}  &=\theta\int_0^\infty e^{-\theta R}
 \langle  \mathbf{1}_{B_R(x)}  \rangle_{\varphi}
dR\\
&\leq C\theta\int_0^\infty
e^{-\theta R}(R+\sqrt s)^{dN}\,dR\\
&\leq C(\theta^{-1}+\sqrt s)^{dN}.
\end{align*}
\end{proof}

Define 
the weighted Nash moment
$$
M(t,x):=\langle |x-\cdot| q(t,x,\cdot)\rangle_{\varphi} = \int_{\mathbb R^{dN}} |x-y| q(t,x,y) \varphi(y) dy,
$$
and the weighted Nash entropy
$$
Q(t,x):=-\langle q \log q \rangle_{\varphi} =-\int_{\mathbb R^{dN}} q(t,x,y) \log q(t,x,y) \varphi(y) dy.
$$
Put
$$
\tilde{Q}(t):=\frac{dN}{2}\log t.
$$

\begin{lemma}[Entropy estimate]
\label{prop-entropy} 
Let $\tau \geq 1$ be fixed.
There exist constants $C_\pm$, independent of $s$, $\varepsilon$, and
$x$, such that, for all
$s/(4\tau)\leq t\leq s/\tau$,
\begin{equation}
\label{entropy-esti}
-C_- \le Q(t,x) - \tilde{Q}(t) \le C_+.
\end{equation}
\end{lemma}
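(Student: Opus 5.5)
We prove the two inequalities in \eqref{entropy-esti} separately. The upper bound on $Q$ comes from a variational (Gibbs) inequality together with the moment bound. The lower bound on $Q$ comes from an on-diagonal upper bound for $q$, which in turn follows from a weighted Nash inequality. Throughout, $s/(4\tau)\le t\le s/\tau$, so $t\asymp s$ with constants depending only on $\tau$, and $\sqrt s\asymp\sqrt t$.

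\emph{Lower bound $Q-\tilde Q\ge -C_-$.} We first show $\sup_{x,y}q(t,x,y)\le Ct^{-dN/2}$ for $t$ in the stated range. For this we need a weighted Nash inequality for $\varphi\,dy=\varphi_{s,\varepsilon}dy$, uniform in $\varepsilon$, at scale $\sqrt s$. Since $\varphi\ge c_\varphi$ by \eqref{eq:uniform-weight-lower}, and $\varphi\le C$ on the region where all $|y^i-y^j|_\varepsilon\ge\sqrt s$, the main work is near collisions. We plan to obtain a local Nash/Sobolev inequality on balls of radius $\sqrt s$ from the spectral gap inequality \eqref{eq:translated-gap} combined with the volume bounds of Lemma \ref{lem:mass2} and \eqref{eq:weighted-volume-growth}. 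The route is a Poincar\'e inequality on Gaussian-localized balls, plus the doubling property coming from uniform mass bounds; this gives a Saloff-Coste type Nash inequality with the constants required at time scale $\asymp s$. Nash's argument then yields $\|e^{-tA}\|_{L^1_\varphi\to L^\infty}\le Ct^{-dN/2}$ for $t\le s/\tau$. Using $\langle q\rangle_\varphi=1$ from \eqref{mass}, we get
\[
-\langle q\log q\rangle_\varphi\ge -\log\sup q\ge \tilde Q(t)-C.
\]
(Alternatively, the upper heat kernel bound of Appendix \ref{upper_bound_app} for $q$ could be invoked if it is available at this point.)

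\emph{Upper bound $Q-\tilde Q\le C_+$.} For any probability density $g$ with respect to $\varphi\,dy$, Gibbs' inequality gives $-\langle q\log q\rangle_\varphi\le-\langle q\log g\rangle_\varphi$. We take $g(y)=Z^{-1}e^{-\theta|x-y|}$ with $\theta=t^{-1/2}$. By Lemma \ref{lem:exp-convolution}, $Z\le C(t^{1/2}+s^{1/2})^{dN}\le C t^{dN/2}$ (using $s\le4\tau t$). Hence
\[
Q(t,x)\le \log Z+\theta M(t,x)\le \tilde Q(t)+C+t^{-1/2}M(t,x).
\]
It remains to prove $M(t,x)\le C\sqrt t$. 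We follow Nash's differential inequality. Differentiating $M$ in $t$, integrating by parts with respect to $\varphi$ (legitimate since $\varepsilon>0$ and $q$ is smooth), and using $|\nabla|x-y||\le1$ together with Cauchy--Schwarz gives
\[
\partial_tM\le\langle|\nabla_y q|\rangle_\varphi\le\Bigl(\langle q|\nabla\log q|^2\rangle_\varphi\Bigr)^{1/2}=(\partial_tQ)^{1/2}.
\]
Combining this with the already established lower bound $Q\ge\tilde Q-C$ and the upper bound $Q\le\tilde Q+C+t^{-1/2}M$, Nash's ODE comparison, run on $[s/(8\tau),t]$ or from $0$, yields $M\le C\sqrt t$. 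Because the window is only a bounded multiple of $s$, the initial-layer issue can be handled by integrating from $0$, using $M(0^+)=0$ and $Q\to-\infty$ appropriately, or by using the $L^\infty$ bound to control the entropy at the left endpoint.

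\emph{Main obstacle.} The main difficulty is the uniform-in-$\varepsilon$ Nash/ultracontractivity bound for $q$, because $\varphi$ is unbounded near collisions. We intend to derive it from the cutoff spectral gap inequality \eqref{eq:translated-gap} via Gaussian-weighted localization at scale $\sqrt s$, with all constants depending on $d,N,\nu,\tau$ only.
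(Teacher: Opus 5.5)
\textbf{Lower half of \eqref{entropy-esti}.} The paper does exactly what you put in parentheses. It invokes \eqref{appup:q-upper-A}, which for $t\le s$ gives $q\le Ct^{-dN/2}$, and combines this with mass conservation. That bound is proved in Appendix~\ref{upper_bound_app} independently of the lower-bound machinery: the regularized many-particle Hardy inequality gives the weighted Sobolev inequality of Lemma~\ref{appup:weighted-sobolev-lemma}, which needs only $\varphi\ge c_\varphi$ and $\nu<2(d-2)$, and Moser iteration with Davies' device finishes. So there is no circularity, and the ``main obstacle'' you single out is already resolved --- by a Hardy-based Sobolev inequality, not by the spectral gap.

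You should drop your primary plan, because as stated it has a real gap. Inequality \eqref{eq:translated-gap} is a Poincar\'e inequality for the Gaussian-localized measure $\Gamma_{s,\xi}\varphi_{s,\varepsilon}\,dy$ at the single scale $\sqrt s$, with the energy integrated over all of $\mathbb R^{dN}$. It is not a Poincar\'e inequality on balls with the energy localized to (a dilate of) the ball, and it gives no information at radii $r\ll\sqrt s$. A Saloff-Coste type Nash inequality, and hence $\|e^{-tA}\|_{L^1_\varphi\to L^\infty}\le Ct^{-dN/2}$, needs such local inequalities at every scale up to $\sqrt t$. Ultracontractivity must control the spreading of arbitrarily concentrated initial data, including near the collision set. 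In the paper the spectral gap enters only later, in Lemma~\ref{N-G-bound}.

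\textbf{Upper half.} Here you follow the paper's Nash scheme. From $M'\le\sqrt{Q'}$, Cauchy--Schwarz with weight $\sqrt\tau$ and an integration by parts give $M(t)\le\sqrt{2t(X+C_-+dN)}$ with $X:=Q-\tilde Q$. A Gibbs-type inequality with Lemma~\ref{lem:exp-convolution} then closes the loop. Your choice $\theta=t^{-1/2}$ gives the linear bound $X\le C+M/\sqrt t$. The paper instead takes $\theta=1/M$ and gets the logarithmic bound $Q\le\log(eC)+dN\log(\sqrt s+M)$. Your version still closes, since $X\le C+\sqrt{2(X+C')}$ bounds $X$.

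The integration must start at $0$. This is why the lower entropy bound is needed on all of $(0,t]$, not only on the window. It controls $\int_0^t\tau^{-1/2}Q\,d\tau$ and gives $\limsup_{\tau\downarrow0}(-\sqrt\tau\,Q(\tau))\le0$ for the boundary term. The paper remarks on this, and \eqref{appup:q-upper-A} supplies the bound for all $0<t\le s$. Starting at $s/(8\tau)$ instead would require an a priori bound on $M$ at that time.

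In fact, once you invoke \eqref{appup:q-upper-A}, you can skip the ODE entirely. Its Gaussian factor, together with \eqref{eq:weighted-volume-growth} and $s\le4\tau t$, gives $M(t)\le C\sqrt t$ directly on the window. Your Gibbs bound then finishes the upper estimate. The paper's Nash argument uses only the on-diagonal part of that bound.
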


\begin{proof}
It is enough to keep track of the facts that $t\leq s$ and
$s/t\leq4\tau$; constants below may depend on the fixed $\tau$.
We start with the first inequality in \eqref{entropy-esti}. By the upper Gaussian bound \eqref{appup:q-upper-A}, we obtain
\begin{equation}\label{q-estimate}
q(t,x,y)  \le C_1 t^{-\frac{Nd}{2}} e^{\frac{C_2t}{s}} \leq C t^{-\frac{dN}{2}}
\end{equation}
(where we have used $t \leq s$), 
so
\begin{align*}
- Q(t) & \le \int_{\mathbb R^{dN}} q(t,x,y) \log C  t^{-\frac{Nd}{2}}  \varphi(y) dy \\
& = \int_{\mathbb R^{dN}} q(t,x,y) (\log C) \varphi(y) dy  - \frac{dN}{2}  \log  t \int_{\mathbb R^{dN}} q(t,x,y)   \varphi(y) dy,
\end{align*}
i.e.\,in more compact notations,
$$
- Q(t) \le   \log C \langle q(t,x,\cdot)\rangle_{\varphi} - \frac{dN}{2}  \log  t \langle q(t,x,\cdot)\rangle_{\varphi}.
$$
Hence, taking into account the conservation of mass
property \eqref{mass} of $q$, we obtain
$$
Q(t)  - \frac{dN}{2}  \log  t \ge  - \log C := -C_-,
$$
which gives the first inequality in \eqref{entropy-esti}.  Notice that
the same argument gives this lower entropy estimate for every
$0<t\leq s$; this slightly wider range will be used below when we
integrate from $0$ to $t$.

Next, we prove the second inequality in \eqref{entropy-esti}.  Define the weighted Nash function
$$
\mathcal N(t):=\langle \frac{|\nabla q|^2}{q}\rangle_{\varphi} = \int_{\mathbb R^{dN}} \frac{|\nabla_y q(t,x,y)|^2}{q(t,x,y)}\varphi(y) dy.
$$
In the following calculation of the derivative of Nash's moment we use the symmetry of $q$, so $A_x q(t,x,y)=A_y q(t,x,y)$ (we use subscript $x$ or $y$ to emphasize in what variable operator $A$ acts on the heat kernel): 
\begin{align*}
M'(t) & =\langle |x-\cdot| \partial_t q(t,x,\cdot)\rangle_{\varphi}  = \int_{\mathbb R^{dN}} |x-y| \partial_t q(t,x,y) \varphi(y) dy \\
& = -  \int_{\mathbb R^{dN}} |x-y|A_x q(t,x,y) \varphi(y) dy =  -  \int_{\mathbb R^{dN}} |x-y|A_y q(t,x,y)\varphi(y) dy \\
& (\text{recall the expression \eqref{A_def} for $A$}) \\
  & = \int_{\mathbb R^{dN}} |x-y| \varphi(y) \Delta_y q(t,x,y) dy  -  \int_{\mathbb R^{dN}} |x-y| \varphi(y)  \bar{b}_\varepsilon \cdot \nabla_y q(t,x,y) dy   \\
	& (\text{integrate by parts in the first term}) \\
  & = -  \int_{\mathbb R^{dN}} \nabla_y |x-y|  \nabla_y q(t,x,y) \varphi(y) dy + 0,
    \end{align*}
		where, at the last step, we have used 
		$$0=-  \int_{\mathbb R^{dN}}  |x-y|  \nabla_y q(t,x,y) \frac{\nabla \varphi(y)}{ \varphi(y)}  \varphi(y) dy - \int_{\mathbb R^{dN}} |x-y| \varphi(y)  \bar{b}_\varepsilon \cdot \nabla_y q(t,x,y) dy$$ due to 
$\bar{b}_\varepsilon  = - \frac{\nabla \varphi}{ \varphi}$, i.e.\,\eqref{b_var}. 
Therefore, by Cauchy-Schwarz' inequality,
$$
M'(t) \le \left(\int_{\mathbb R^{dN}} (\nabla  |x-y|)^2 q(t,x,y) \varphi(y) dy\ \right)^{\frac{1}{2}} \left(  \int_{\mathbb R^{dN}} \frac{|\nabla_y q(t,x,y)|^2}{q(t,x,y)}\varphi(y) dy\right)^{\frac{1}{2}},
$$
and so, invoking again the conservation of probability, we arrive at
\begin{equation}
\label{M_N}
M'(t)  \le  \sqrt{\mathcal N(t)},
\end{equation}
where $\mathcal N$ is Nash's function introduced above.

On the other hand, we can evaluate
\begin{align*}
Q'(t) & =  -\int_{\mathbb R^{dN}} (\partial_t q(t,x,y)) \log q(t,x,y) \varphi(y) dy -\int_{\mathbb R^{dN}} q(t,x,y) \partial_t\log q(t,x,y) \varphi(y) dy \\
& = \int_{\mathbb R^{dN}}  A_xq(t,x,y) \log q(t,x,y) \varphi(y) dy -\int_{\mathbb R^{dN}}  \partial_t q(t,x,y) \varphi(y) dy \\
& = \int_{\mathbb R^{dN}}  A_yq(t,x,y)(1+ \log q(t,x,y) )\varphi(y) dy. 
\end{align*}
For fixed $\varepsilon>0$ the coefficients are smooth and bounded, so
the following integration by parts is justified first with standard
cutoffs and then by approximation.  Invoking \eqref{form},
\begin{align*}
Q'(t) = \int_{\mathbb R^{dN}}  \nabla_y q(t,x,y) \cdot \nabla_y (1+ \log q(t,x,y) )\varphi(y) dy  = \int_{\mathbb R^{dN}} \frac{|\nabla_y q(t,x,y)|^2}{q(t,x,y)}\varphi(y) dy,
\end{align*}
that is,
\begin{equation}
\label{Q_N}
Q'(t) = \mathcal{N}(t).
\end{equation}
Combining \eqref{M_N} and \eqref{Q_N} and integrating in time, we obtain the following inequality between the moment and the entropy:
$$
M(t)  \le \int_0^t \sqrt{Q'(\tau)} d\tau.
$$
Here we have used the fact that $M(0)=0$, which is an obvious consequence of the fact that $q(0,x,y)$ is the delta-function concentrated in $x-y$.

We estimate the right-hand side of the previous inequality:
$$
 \int_0^t \sqrt{ Q'(\tau)} d\tau \le \left(  \int_0^t \frac{1}{\sqrt{\tau}}d\tau \right)^{\frac{1}{2}}\left( \int_0^t \sqrt{\tau}Q'(\tau)  d\tau \right)^{\frac{1}{2}} =  \left(  \int_0^t \frac{1}{\sqrt{\tau}} d\tau \right)^{\frac{1}{2}}\left( \int_0^t \sqrt{\tau} dQ(\tau) \right)^{\frac{1}{2}}.
$$
Since $  \int_0^t \tau^{-\frac{1}{2}}d\tau = 2\sqrt{t}$, we thus have 
\begin{equation}\label{Mt}
M(t)  \le  \left( 2\sqrt{t} \int_0^t \sqrt{\tau} dQ(\tau) \right)^{\frac{1}{2}},
\end{equation}
where, integrating by parts, we evaluate
\begin{align*}
 \int_0^t \sqrt{\tau} dQ(\tau) & =   \sqrt{t} Q(t) - \frac{1}{2} \int_0^t \frac{1}{\sqrt{\tau}} Q(\tau) d\tau \\ 
& (\text{we apply the first inequality in \eqref{entropy-esti}}) \\
& \le  \sqrt{t} Q(t) + \frac{1}{2} \int_0^t \frac{1}{\sqrt{\tau}} \left( - \frac{dN}{2}  \log  \tau + C_- \right) d\tau \\
& \leq   \sqrt{t} \left(Q(t) + C_-  -\frac{dN}{2}\log t + dN \right).
\end{align*}
Applying this estimate in \eqref{Mt} yields
\begin{equation}
\label{m_est}
M(t)  \le \sqrt{2t \left(Q(t) + C_-  - \frac{dN}{2}\log t + dN \right)}. 
\end{equation}

Next, inequality $ q\log q \ge -\xi q - e^{-1-\xi}$ ($\xi \in \mathbb R$) with $\xi = \beta + \theta |x-\cdot| $ for $\theta >0$, and the preservation of mass property \eqref{mass}, yield
\begin{equation}
\label{Q_expr}
- Q(t)= \int_{\mathbb R^{dN}} q(t,x,y) \log q(t,x,y) \varphi(y) dy \ge - \beta - \theta M(t) - e^{-1-\beta}\int_{\mathbb R^{dN}}  e^{-\theta |x-y| } \varphi(y) dy.
\end{equation}
Lemma \ref{lem:exp-convolution} estimates the last integral, uniformly
in $x$, by $C(\theta^{-1}+\sqrt s)^{dN}$.
Applying this in \eqref{Q_expr}, we arrive at the following intermediate upper bound on the entropy:
$$
Q(t) \le   \beta + \theta M(t) + e^{-1-\beta}  C (\theta^{-1} + s^{\frac{1}{2}})^{dN}.
$$
Let us fix $t$.
We are free to choose parameters $\theta$ and $\beta$. We define $\theta$ by the identity $ \theta M(t) = 1$ and put $\beta:= -1 + \log C + Nd \log  (\theta^{-1} + s^{\frac{1}{2}})$. We obtain $Q(t) \leq \beta + 2$. Therefore, invoking again the definition of $\beta$,
\begin{align*}
Q(t) & \le 1+\log C + dN \log  (s^{\frac{1}{2}} + \theta^{-1})  \\
& = \log (eC) + dN \log (s^{\frac{1}{2}} + \theta^{-1}) \\
& =  \log (eC)  + dN \log  (s^{\frac{1}{2}} + M(t)).    
\end{align*}
We can now estimate:
\begin{align*}
Q(t)- dN \log \sqrt{t} & \le  \log (eC) + dN \log  (s^{\frac{1}{2}} + M)  - dN \log \sqrt{t} \\
& (\text{use }s/t\leq4\tau) \\
& \leq  \log (eC) + dN\log \left(
       \frac{M(t)}{\sqrt{t} } +2\sqrt\tau \right).
\end{align*}
It follows that
$$
e^{(Q(t)- Nd \log \sqrt{t})/Nd}
 \le C_\tau t^{-1/2}\left(M(t)+\sqrt t\right).
$$
Hence,  using inequality \eqref{m_est}, we arrive at
$$
e^{(Q(t)- Nd \log \sqrt{t})/dN}
 \le C_\tau\left(
 \sqrt{Q(t)-dN\log\sqrt t+C_-+dN}+1\right).
$$
This inequality implies that there is a constant $C_+$, depending
only on $d,N,\nu$ and the fixed $\tau$, such that
$$
Q(t)-  \frac{dN}{2} \log t \le  C_+.
$$
The proof of Lemma \ref{prop-entropy} is completed.
\end{proof}

\begin{remark}
The entropy estimate alone already gives the 
on-diagonal lower bound
$$
 q_{s,\varepsilon}(r,x,x)\ge c r^{-dN/2},  \qquad \frac{s}{2\tau}\le r\le\frac{2s}{\tau}.
$$
Indeed, put $u=r/2$.  Then
$u\in[s/(4\tau),s/\tau]$, and Jensen's inequality, symmetry, and the
reproduction property give
\[
 \exp\!\left(\langle q(u,x,\cdot)\log q(u,x,\cdot)
                    \rangle_\varphi\right)
 \le\langle q(u,x,\cdot)^2\rangle_\varphi
 =q(2u,x,x)=q(r,x,x).
\]
The lower entropy estimate bounds the left-hand side below by
$c u^{-Nd/2}$, which is comparable to $r^{-Nd/2}$.
\end{remark}

\medskip

We now estimate the weighted Nash $G$-function.  Fix a center
$\xi\in\mathbb R^{dN}$.  In this subsection only, abbreviate
$\Gamma_{s,\xi}$ to $\Gamma_s$ and put
\[
 \chi_s(y):=e^{-|y-\xi|^2/(4s)},
 \qquad
 m_s(\xi):=\langle\Gamma_s\rangle_\varphi.
\]
Thus the center is moved only in the auxiliary Gaussian; neither the
weight $\varphi_{s,\varepsilon}$ nor the kernel $q_{s,\varepsilon}$ is
translated.  Define
\begin{equation}
\label{G_def}
G_\xi(t,z) := \left\langle \Gamma_s(\cdot)
 \log q(t,z,\cdot)\right\rangle_{\varphi}.
\end{equation}

\begin{lemma}[$G$-function bound]
\label{N-G-bound} 
There exist constants $K>0$ and $\tau\ge2$, independent of
$s,\varepsilon$, and $\xi$, such that
\begin{equation}
\label{G_ineq}
 m_s(\xi)^{-1}G_\xi(t,z)\ge-\tilde Q(t)-K,
\end{equation}
whenever $s/(2\tau)\leq t\leq s/\tau$ and
$z\in B_{\sqrt t}(\xi)$.
\end{lemma}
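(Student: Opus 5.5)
This is Nash's $G$-function argument, run in the weighted setting with the spectral gap of Theorem~\ref{thm_spectral_gap} in place of the Gaussian Poincar\'e inequality. Throughout, $s$ and $\varepsilon$ are fixed, and we consider $G_\xi(t,z)$ as a function of $t\in[s/(2\tau),s/\tau]$ with $z$ fixed.

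\textbf{Differentiating $G$.} By symmetry of $q$, the kernel solves $\partial_t q(t,z,\cdot)=-A_y q(t,z,\cdot)$. Integrating by parts in the weighted form \eqref{form} gives
\[
 \partial_t G_\xi
 =\Big\langle \Gamma_s|\nabla\log q|^2\Big\rangle_\varphi
 -\Big\langle \nabla\Gamma_s\cdot\nabla\log q\Big\rangle_\varphi .
\]
Since $\nabla\Gamma_s=-\Gamma_s\frac{y-\xi}{2s}$, Cauchy--Schwarz bounds the cross term by $\tfrac12\langle\Gamma_s|\nabla\log q|^2\rangle_\varphi+\tfrac{1}{8s^2}\langle\Gamma_s|y-\xi|^2\rangle_\varphi$. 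The last integral is at most $C m_s(\xi)/s$ by Lemma~\ref{lem:mass2}. Set $w=\log q$ and $\bar w:=m_s(\xi)^{-1}G_\xi$. The spectral gap \eqref{eq:translated-gap}, with mean $\bar f^{\,\xi}_{s,\varepsilon}=\bar w$, then yields
\[
 \partial_t \bar w\ \ge\ \frac{M}{2s}\,m_s^{-1}\big\langle\Gamma_s|w-\bar w|^2\big\rangle_\varphi-\frac{C}{s}.
\]

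\textbf{Positive lower bound on the variance.} On the Gaussian core $B=B_{\sqrt s}(\xi)$ we have $\Gamma_s\ge c s^{-dN/2}$, and $\varphi\ge c_\varphi$. Lemma~\ref{lem:gaussian-transfer} together with the upper bound \eqref{appup:q-upper-A} gives $w\le\log(Ct^{-dN/2})=-\tilde Q(t)+C$. For $z\in B_{\sqrt t}(\xi)$ and $t\sim s/\tau$, the Gaussian upper bound and the entropy lower bound of Lemma~\ref{prop-entropy} show that a fixed fraction of the mass of $q(t,z,\cdot)$ sits in $B$ with $q\ge c t^{-dN/2}$. If $\bar w$ is very negative, say $\bar w\le -\tilde Q-K$, then on that set $w-\bar w\gtrsim K$. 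Following Nash, write $\partial_t\bar w\ge\frac{c}{s}(\bar w+\tilde Q+K_0)^2$ on the region where $\bar w+\tilde Q<-K_0$, after splitting the variance integral over the set $\{w\ge -\tilde Q-K_0/2\}$, whose $\Gamma_s\varphi$-measure is $\gtrsim m_s$. The measure bound comes from Chebyshev applied to the entropy/mass estimates and the comparability $m_s\sim 1$ from Lemma~\ref{lem:mass2}.

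\textbf{Integrating in time.} Put $u=-(\bar w+\tilde Q)$. Since $\partial_t\tilde Q=dN/(2t)\le C/s$, we get $-\partial_t u\ge \frac{c}{s}(u-K_0)^2-\frac Cs$ whenever $u>K_0$. Integrating this Riccati inequality over a time interval of length comparable to $s/\tau$ forces $u\le K$ at the later times in $[s/(2\tau),s/\tau]$, regardless of the initial value. To cover the whole interval we start from $t=s/(4\tau)$, which is allowed by Lemma~\ref{prop-entropy}, and choose $\tau$ to fix the ratio. This gives \eqref{G_ineq}.

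\textbf{Main obstacle.} The hardest step is the lower bound on the variance. One must show, uniformly in $\varepsilon$ and $\xi$, that $w=\log q$ is not too small on a set of definite $\Gamma_s\varphi$-measure. This requires combining the entropy bound, the Gaussian upper bound, and the weighted volume growth \eqref{eq:weighted-volume-growth} so that the localization of $q(t,z,\cdot)$ near $\xi$ holds with constants independent of $s$. Choosing $\tau$ so that the Gaussian tail outside $B$ is small is where that parameter enters.
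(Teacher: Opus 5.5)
Your architecture is the paper's: differentiate $G_\xi$, absorb the cross term by Cauchy--Schwarz and Lemma~\ref{lem:mass2}, apply \eqref{eq:translated-gap} with mean $m_s(\xi)^{-1}G_\xi$, and integrate a Riccati-type inequality. The real difference is in the step you single out as hardest: the lower bound on $\langle\Gamma_{s,\xi}|\log q-m_s(\xi)^{-1}G_\xi|^2\rangle_\varphi$.

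\textbf{The paper's route.} The paper does not look for a good set. From \eqref{appup:q-upper-A} it gets the pointwise domination \eqref{estim-varphi}, $c_0(u/s)^{dN/2}q(u,z,\cdot)\le\Gamma_{s,\xi}$; this is where $z\in B_{\sqrt t}(\xi)$ and a large $\tau$ enter. So the $\Gamma_{s,\xi}\varphi$-variance dominates a $q\varphi$-variance. Cauchy--Schwarz for the probability measure $q\varphi\,dy$ and the entropy \emph{upper} bound $Q\le\tilde Q+C_+$ then give $c(u/s)^{dN/2}(\Phi-C_+)_+^2$ in one line, where $\Phi=-m^{-1}G_\xi-\tilde Q$ and $m=m_s(\xi)$. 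The paper also absorbs the $C/s$ error into $m\tilde Q'(u)\ge c_\varphi dN\tau/(2s)$. Hence $-m\Phi$ is monotone, and only the interval $[t/2,t]$ is needed.

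\textbf{Your route.} Your good-set argument also works. If you run it through mass conservation instead of entropy, it avoids Lemma~\ref{prop-entropy} entirely:
\begin{enumerate}
\item By \eqref{appup:q-upper-A} and \eqref{eq:weighted-volume-growth}, the $q(u,z,\cdot)\varphi$-mass outside $B_{\sqrt s}(\xi)$ is at most about $C\tau^{dN/2}e^{-c\tau}$. The polynomial factor comes from the singular weight on balls smaller than $\sqrt s$, which is why $\tau$ must be large.
\item The $q\varphi$-mass of $B_{\sqrt s}(\xi)\cap\{q<\delta u^{-dN/2}\}$ is at most $C\delta\tau^{dN/2}$.
\item Hence $S=B_{\sqrt s}(\xi)\cap\{q\ge\delta u^{-dN/2}\}$ carries $q\varphi$-mass at least $1/2$ for $\delta=\delta(\tau)$ small. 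Using $q\le Cu^{-dN/2}$, this gives $\langle\Gamma_{s,\xi}\mathbf{1}_S\rangle_\varphi\ge c\tau^{-dN/2}$.
\end{enumerate}
This yields an inequality of the form \eqref{eq:Phi-ode} with $C_+$ replaced by $\log(1/\delta)$. The cost is this extra bookkeeping in place of a single pointwise comparison.

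\textbf{Three fixes to your write-up.}
\begin{enumerate}
\item The entropy lower bound $Q\ge\tilde Q-C_-$ that you cite cannot produce such a set; it only says $q$ is not too concentrated. Use the mass-conservation argument above, or the entropy upper bound with a Chebyshev splitting.
\item In passing from $\bar w$ to $-(\bar w+\tilde Q)$, the relevant fact is $\partial_t\tilde Q\ge0$, not $\partial_t\tilde Q\le C/s$. Together with $\partial_t\bar w\ge-C/s$, it shows that $-(\bar w+\tilde Q)$ grows at rate at most $C/s$. That is what handles the case where it drops below your threshold before the target time.
\item Replace $\log q$ by $\log(\varepsilon_1+q)$, as the paper does, to justify the integrations by parts and the use of \eqref{eq:translated-gap}. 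Then let $\varepsilon_1\downarrow0$ at the end.
\end{enumerate}
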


\begin{proof}
We use Lemma \ref{prop-entropy} and Theorem
\ref{thm_spectral_gap}.  Fix a target time
$t\in[s/(2\tau),s/\tau]$.  All differential inequalities below are
used only for $u\in[t/2,t]$; this interval is contained in
$[s/(4\tau),s/\tau]$, where the entropy estimate is available.
Let us first carry out the proof of \eqref{G_ineq} for
\begin{equation}
\label{G_def_eps}
G_{\varepsilon_1}(t,z) := \left\langle \Gamma_s(\cdot)
 \log \left[\varepsilon_1+q(t,z,\cdot)\right]\right\rangle_{\varphi},
 \qquad \varepsilon_1>0,
\end{equation}
where $\varepsilon_1$ is needed to justify the integration by parts.
We pass to the limit $\varepsilon_1\downarrow0$ at the end of the
proof.  To shorten notation, write for now $G=G_{\varepsilon_1}$.
We have
\begin{align}
\partial_t G& =  \big\langle \Gamma_s(\cdot)  \frac{q^{\prime}(t,z,\cdot)}{ \varepsilon_1+q(t,z,\cdot)}\big\rangle_{\varphi} =  - \big\langle \frac{\Gamma_s(\cdot)}{ \varepsilon_1+q(t,z,\cdot)}  A_zq^{}(t,z,\cdot)\big\rangle_{\varphi} \notag \\
&  = - \big\langle \frac{\Gamma_s(\cdot)}{ \varepsilon_1+q(t,z,\cdot)}  A_{\cdot}\, q^{}(t,z,\cdot)\big\rangle_{\varphi}  \notag \\
& = - \big\langle\nabla_{\cdot}\, \frac{\Gamma_s(\cdot)}{ \varepsilon_1+q(t,z,\cdot)},  \nabla_{\cdot}\, q^{}(t,z,\cdot)\big\rangle_{\varphi} \notag \\[2mm]
& =  - \big\langle \nabla_{\cdot}\,  \Gamma_s(\cdot),  \nabla_{\cdot}\,   \log (\varepsilon_1+q(t,z,\cdot)) \big\rangle_{\varphi} +  \big\langle     \Gamma_s(\cdot) \vert \nabla_{\cdot}\,   \log (\varepsilon_1+q(t,z,\cdot))\vert^2 \big\rangle_{\varphi} \notag \\[2mm] 
& =:I_1 + I_2. \label{g_t_est}
\end{align}
Let us estimate $I_1$:  
\begin{align*}
-I_1
&= \int_{\mathbb{R}^{dN}}
   \nabla_x\Gamma_s(x)
   \cdot\nabla_x\log\bigl(\varepsilon_1+q(t,z,x)\bigr)
   \varphi(x)\,dx\\
&= \int_{\mathbb{R}^{dN}}
   \frac{\nabla_x\Gamma_s(x)}{\sqrt{\Gamma_s(x)}}
   \cdot
   \left(
     \sqrt{\Gamma_s(x)}\,
     \nabla_x\log\bigl(\varepsilon_1+q(t,z,x)\bigr)
   \right)
   \varphi(x)\,dx \\
&\le \frac12
   \int_{\mathbb{R}^{dN}}
   \frac{|\nabla_x\Gamma_s(x)|^2}{\Gamma_s(x)}
   \varphi(x)\,dx + \frac12    \int_{\mathbb{R}^{dN}}  \Gamma_s(x)
   \left|
     \nabla_x\log\bigl(\varepsilon_1+q(t,z,x)\bigr)
   \right|^2
   \varphi(x)\,dx  \\
&= \frac12
   \left\langle
     \Gamma_s(\cdot)
     \left|\nabla_{\cdot}\log\Gamma_s(\cdot)\right|^2
   \right\rangle_{\varphi}
   +\frac12 I_2.
\end{align*}
So,
\begin{equation}\label{G-derive}
I_1 + I_2 \ge -\frac{1}{2} \left\langle  \Gamma_s(\cdot) \vert \nabla_\cdot  \log \Gamma_s(\cdot) \vert^2 \right\rangle_{\varphi} + \frac{1}{2} \left\langle     \Gamma_s(\cdot) \vert \nabla_{\cdot}  \log (\varepsilon_1+q(t,z,\cdot))\vert^2 \right\rangle_{\varphi}.
\end{equation}
By Lemma \ref{lem:mass2}, uniformly in the center $\xi$,
\begin{equation}  \label{eq:centered-Gaussian-energy}
 \left\langle\Gamma_s
 |\nabla\log\Gamma_s|^2\right\rangle_\varphi =\frac1{4s^2}\left\langle\Gamma_s
 |\,\cdot-\xi|^2\right\rangle_\varphi  \leq\frac Cs.
\end{equation}
Combining \eqref{g_t_est}, \eqref{G-derive}, and
\eqref{eq:centered-Gaussian-energy}, we obtain
$$
\partial_t G(t,z)+\frac Cs
 \geq\frac12\left\langle\Gamma_s
 |\nabla\log(\varepsilon_1+q(t,z,\cdot))|^2
 \right\rangle_\varphi.
$$
Write $m=m_s(\xi)$.  By \eqref{eq:uniform-weight-lower},
$m\ge c_\varphi$. 

Let $\tau_0$ be large enough for the Gaussian
comparison \eqref{estim-varphi} below, and choose
\begin{equation}
 \label{tau_choice}
 \tau\ge 2\vee\frac{2C}{dN\, c_\varphi}\vee\tau_0.
\end{equation}
This choice depends only on $d,N,$ and $\nu$, because the lower bound
for $m$ and all constants in the upper estimate
\eqref{appup:q-upper-A} are uniform in $s,\varepsilon,$ and $\xi$.
Since $u\le s/\tau$, the preceding differential inequality and
$\tilde Q'(u)=\frac{Nd}{2u}$ give, for $u\in[t/2,t]$,
\begin{equation}\label{G-deriv}
 \frac d{du}\{G(u,z)+m\tilde Q(u)\}
 \ge \frac{M}{2s}
 \left\langle\Gamma_s
 \left|\log(\varepsilon_1+q(u,z,\cdot))-m^{-1}G(u,z)
 \right|^2\right\rangle_\varphi .
\end{equation}
Here we used the translated spectral gap
\eqref{eq:translated-gap}.

By the Gaussian upper bound \eqref{appup:q-upper-A}, for
$u\in[t/2,t]$ and
$z\in B_{\sqrt t}(\xi)$, that estimate and the elementary inequality
$|y-\xi|^2\le2|y-z|^2+2|z-\xi|^2$ imply, after increasing $\tau$ if
necessary,
\begin{equation}\label{estim-varphi}
 c_0\left(\frac us\right)^{dN/2}q(u,z,y)
 \le \Gamma_{s,\xi}(y),
 \qquad y\in\mathbb R^{dN} .
\end{equation}
Notice that this comparison is centered at $\xi$, not at the origin.
Combining \eqref{G-deriv}, \eqref{estim-varphi}, Cauchy--Schwarz, and
$\langle q(u,z,\cdot)\rangle_\varphi=1$, we obtain
\begin{equation}
 \frac d{du}\{-m\Phi(u)\}
 \ge \frac c{s}\left(\frac us\right)^{dN/2}
       (\Phi(u)-C_+)_{+}^{2},
 \qquad
 \Phi(u):=-m^{-1}G(u,z)-\tilde Q(u).
 \label{eq:Phi-ode}
\end{equation}
Indeed, Lemma \ref{prop-entropy} and
$\log(\varepsilon_1+q)\ge\log q$ show that
\[
 \left\langle q
 \left|\log(\varepsilon_1+q)-m^{-1}G\right|\right\rangle_\varphi
 \ge (\Phi-C_+)_{+}.
\]

It remains to integrate \eqref{eq:Phi-ode}, and only on the interval
$[t/2,t]$.  

If
$\Phi(u_0)<2(C_+\vee1)$ for some $u_0\in[t/2,t]$, then the monotonicity of
$-m\Phi$ gives $\Phi(t)<2(C_+\vee1)$.  Otherwise
$\Phi(u)\ge2 (C_+\vee1)$ throughout that interval, so
$(\Phi-C_+)_{+}\ge\Phi/2$ and
\[
 \frac d{du}\frac1{\Phi(u)}
 \ge\frac c{ms}\left(\frac us\right)^{dN/2}.
\]
Integration from $t/2$ to $t$, followed by Lemma \ref{lem:mass2}, yields
\[
 \Phi(t)\le C m\left(\frac st\right)^{dN/2+1}\le C,
\]
because $s/t\in[\tau,2\tau]$.  This proves \eqref{G_ineq} for
$G_{\varepsilon_1}$.  For fixed $t,z$, the upper bound
\eqref{appup:q-upper-A} supplies a finite constant $M_{t,s}$ with
$q(t,z,\cdot)\le M_{t,s}$.  Apply monotone convergence to
$\log(M_{t,s}+1)-\log(\varepsilon_1+q)$ and then let
$\varepsilon_1\downarrow0$.  This gives the stated bound for $G_\xi$.
    \end{proof}

\subsubsection*{Proof of the lower bound in Theorem \ref{thm1} completed}

\noindent\emph{Step 1.}
Let $r>0$, set $s=\tau r$, and write
$q=q_{\tau r,\varepsilon}$ and
$\varphi=\varphi_{\tau r,\varepsilon}$.  If
$|u-v|\le\sqrt r$, put $\xi=(u+v)/2$.  Then
$u,v\in B_{\sqrt{r/2}}(\xi)$, and the reproduction formula at the
half-time gives
\begin{align}
 q(r,u,v)
 &=\langle q(r/2,u,\cdot)q(r/2,\cdot,v)\rangle_\varphi\notag\\
 &\ge(4\pi s)^{dN/2}
   \langle\Gamma_{s,\xi}\rangle_\varphi
   \exp\!\left\{
    \frac{G_\xi(r/2,u)+G_\xi(r/2,v)}
         {\langle\Gamma_{s,\xi}\rangle_\varphi}
       \right\}.                                      \label{eq:centered-Jensen}
\end{align}
Here we inserted
$e^{-|\cdot-\xi|^2/(4s)}=(4\pi s)^{Nd/2}\Gamma_{s,\xi}\le1$,
then applied Jensen's inequality and symmetry of $q$.  Since
$r/2=s/(2\tau)$, Lemmas \ref{N-G-bound} and \ref{lem:mass2} imply
\begin{equation}
 q_{\tau r,\varepsilon}(r,u,v)\ge c r^{-dN/2},
 \qquad |u-v|\le\sqrt r.                         \label{q_bd}
\end{equation}
We now use Lemma \ref{lem:gaussian-transfer}(b) with
$s=\tau r$ and $M_0=1$.
Here
\[
 r\|h_\varepsilon\|_\infty^2\leq C/\tau,
\]
so the perturbation changes only the constants in the local Gaussian
estimate.  It follows from \eqref{q_bd} that
\begin{equation}
 p_\varepsilon(r,u,v)
 \ge c r^{-dN/2}\varphi_{\tau r,\varepsilon}(v),
 \qquad |u-v|\le\sqrt r.                         \label{eq:p-local}
\end{equation}
Although its proof uses the auxiliary scale $\tau r$, this is an
estimate for the physical kernel $p_\varepsilon$, which is independent
of that choice.

\medskip

\noindent\emph{Step 2.}
We first derive an unweighted Gaussian bound.  Fix $t>0$ and
$x,y\in\mathbb R^{dN}$, and set
$$
 L:=\left\lceil\max\left\{1, 16\frac{|x-y|^2}{t} \right\}\right\rceil,
 \qquad r:=\frac tL,
 \qquad x_m:=x+\frac mL(y-x).
$$
For $1\le m<L$, let
$A_m=B_{\sqrt r/8}(x_m)$, and put $z_0=x,z_L=y$.  If
$z_m\in A_m$ and $z_{m+1}\in A_{m+1}$, with the evident endpoint
convention, then $|z_{m+1}-z_m|\le\sqrt r$; consequently
\eqref{eq:p-local}, with the same matched auxiliary scale $\tau r$
for every factor, applies along the entire chain.  Since
$\varphi_{\tau r,\varepsilon}\ge c_\varphi$, the Lebesgue semigroup
property of $p_\varepsilon$ gives
\begin{align}
 p_\varepsilon(t,x,y)
 &\ge\int_{A_1\times\cdots\times A_{L-1}}
       \prod_{m=0}^{L-1}p_\varepsilon(r,z_m,z_{m+1})
       \,dz_1\cdots dz_{L-1}\notag\\
 &\ge c\,t^{-dN/2}\exp(-CL)
 \ge c\,t^{-dN/2}
       \exp\!\left(-C\frac{|x-y|^2}{t}\right).       \label{eq:p-global-unweighted}
\end{align}
It remains to recover the endpoint weight at scale $t$.  Put
$r_0=t/2$ and use the semigroup formula once more, restricting its
integral to $B_{\sqrt{r_0}/4}(y)$.  On this ball,
\eqref{eq:p-global-unweighted} controls the first factor and
\eqref{eq:p-local}, used with auxiliary scale $\tau r_0$, controls the
second.  Moreover,
$|x-z|^2/r_0\le4|x-y|^2/t+1/8$ there.  Since $\tau\ge2$ and the
cutoff function $\eta$ is nonincreasing,
$\varphi_{\tau r_0,\varepsilon}(y)
\ge\varphi_{t,\varepsilon}(y)$.  Therefore
\begin{align*}
 p_\varepsilon(t,x,y)
 &\ge\int_{B_{\sqrt{r_0}/4}(y)}
       p_\varepsilon(r_0,x,z)p_\varepsilon(r_0,z,y)\,dz\\
 &\ge c r_0^{-dN}
       e^{-C|x-y|^2/t}\varphi_{t,\varepsilon}(y)
       |B_{\sqrt{r_0}/4}(y)|.
\end{align*}
Consequently,
\begin{equation}
 p_\varepsilon(t,x,y)
 \ge c\,t^{-dN/2}
       \exp\!\left(-C\frac{|x-y|^2}{t}\right)
       \varphi_{t,\varepsilon}(y),                  \label{eq:p-final-lower}
\end{equation}
with constants independent of $\varepsilon$ (and, in fact, valid for
all $t>0$).  

\bigskip

\section{Direct proof of the upper heat kernel bound}
\label{upper_bound_app}

In this appendix we improve slightly the proof of the upper bound in \cite{BK}, namely, for all $\nu<2(d-2)$ and all dimensions $d \geq 3$, we regularize the denominator of the interaction kernel as
$|z|_\varepsilon=(|z|^2+\varepsilon)^{1/2}$; in \cite{BK} we used a different regularization when $d=3$. 

We will only deal with $\varepsilon>0$. The passage to the limit $\varepsilon \downarrow 0$ is justified in a standard way using the semigroup convergence \eqref{lim_exists} and the Lebesgue differentiation theorem.

As before, the constants denoted by $C$ below may change from
line to line and depend on $d,N,\nu$, but not on $s$ or
$\varepsilon$.

Fix $s>0$ and $\varepsilon>0$.  We use the notations introduced in the beginning of the proof of Theorem \ref{thm1}:
\[
 \varphi:=\varphi_{s,\varepsilon},
 \qquad
 A=-\varphi^{-1}\operatorname{div}(\varphi\nabla),
\]
and $q=q_{s,\varepsilon}$ denotes the kernel of $e^{-tA}$ with
respect to $\varphi(x)\,dx$. 

For the calculation below we retain the notation
\begin{equation}
 \psi_{s,\varepsilon}(x)
 :=\prod_{1\leq i<j\leq N}
 \left(s^{-1/2}|x^i-x^j|_\varepsilon\right)^{-\nu/N}.
 \label{appup:weights}
\end{equation}

\subsubsection{Regularized many-particle Hardy inequality}

Set
\begin{equation}
 K_\varepsilon(z):=\frac{z}{|z|_\varepsilon^2},
 \qquad
 V_\varepsilon(z):=\operatorname{div}K_\varepsilon(z)
 =\frac{(d-2)|z|^2+d\varepsilon}{(|z|^2+\varepsilon)^2}.
 \label{appup:regularized-potential}
\end{equation}

\begin{lemma}
\label{appup:regularized-hardy-lemma}
For every $f\in W^{1,2}(\mathbb R^{dN})$,
\begin{equation}
 \frac{d-2}{N}\sum_{i<j}\int_{\mathbb R^{dN}}
 V_\varepsilon(x^i-x^j)|f(x)|^2dx
 \leq\int_{\mathbb R^{dN}}|\nabla f(x)|^2dx.
 \label{appup:regularized-hardy}
\end{equation}
\end{lemma}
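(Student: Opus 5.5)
The plan is to imitate the proof of Lemma~\ref{lem:hardy}: first prove a one-pair regularized Hardy inequality in the relative coordinate, then sum over pairs using the pointwise bound $\sum_{i<j}|(\nabla_i-\nabla_j)f|^2\le N|\nabla f|^2$ established there. By density it suffices to take real-valued $f\in C_c^\infty(\mathbb R^{dN})$; both sides are continuous in $W^{1,2}$ because $V_\varepsilon$ is bounded (by $d/\varepsilon$) for fixed $\varepsilon>0$.

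\emph{Pair inequality.} Fix $i<j$ and put $z=x^i-x^j$, with the midpoint $(x^i+x^j)/2$ and the other particles frozen, so that $\nabla_i-\nabla_j=2\nabla_z$. I will show
\[
 (d-2)\int_{\mathbb R^{d}} V_\varepsilon(z)|g(z)|^2\,dz\le\int_{\mathbb R^d}|\nabla_z g|^2\,dz,\qquad g\in C_c^\infty(\mathbb R^d).
\]
This follows from the vector-field (divergence) argument. Integrating by parts,
\[
 \int V_\varepsilon g^2=\int \operatorname{div}K_\varepsilon\, g^2=-2\int g\,K_\varepsilon\cdot\nabla g\le 2\Bigl(\int|K_\varepsilon|^2g^2\Bigr)^{1/2}\Bigl(\int|\nabla g|^2\Bigr)^{1/2}.
\]
The key pointwise estimate is $|K_\varepsilon|^2=|z|^2/(|z|^2+\varepsilon)^2\le V_\varepsilon/(d-2)$, which is immediate from $(d-2)|z|^2\le (d-2)|z|^2+d\varepsilon$. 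Hence $\int V_\varepsilon g^2\le \frac{2}{\sqrt{d-2}}(\int V_\varepsilon g^2)^{1/2}(\int|\nabla g|^2)^{1/2}$, which gives $\frac{d-2}{4}\int V_\varepsilon g^2\le\int|\nabla_z g|^2$. Substituting $\nabla_z=\frac12(\nabla_i-\nabla_j)$ and integrating over the frozen variables yields
\[
 (d-2)\int_{\mathbb R^{dN}}V_\varepsilon(x^i-x^j)f^2\,dx\le\int_{\mathbb R^{dN}}|(\nabla_i-\nabla_j)f|^2\,dx.
\]

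\emph{Summation.} Summing over $i<j$ and using the elementary identity from the proof of Lemma~\ref{lem:hardy}, $\sum_{i<j}|(\nabla_i-\nabla_j)f|^2\le N|\nabla f|^2$, gives \eqref{appup:regularized-hardy}.

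The only delicate point is the factor bookkeeping. The Cauchy--Schwarz step produces $(d-2)/4$ in the $z$-variable, and the factor $4$ coming from $\nabla_i-\nabla_j=2\nabla_z$ exactly restores the constant $d-2$; the change of variables is volume-preserving up to a harmless constant Jacobian that appears on both sides. Everything else is routine.
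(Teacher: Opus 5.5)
Your proof is correct and gives exactly the constant $(d-2)/N$, but it is organized differently from the paper's proof. The paper never passes through a pair inequality. It builds a single many-particle field $F_i=\sum_{j\ne i}K_\varepsilon(x^i-x^j)$ on $\mathbb R^{dN}$ and uses $\operatorname{div}F=2\sum_{i<j}V_\varepsilon(x^i-x^j)$. It combines this with the algebraic bound $\sum_i|F_i|^2\le N\sum_{i<j}|K_\varepsilon(x^i-x^j)|^2\le\frac{N}{d-2}\sum_{i<j}V_\varepsilon(x^i-x^j)$, and then expands $0\le\int|\nabla f+\frac{d-2}{N}Ff|^2$. That algebraic bound comes from Cauchy--Schwarz applied to $\sum_i|F_i|^2=\sum_{i<j}(F_i-F_j)\cdot K_\varepsilon(x^i-x^j)$ and $\sum_{i<j}|F_i-F_j|^2=N\sum_i|F_i|^2$, both consequences of the oddness of $K_\varepsilon$.

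So the identity $\sum_{i<j}|v_i-v_j|^2=N\sum_i|v_i|^2-|\sum_iv_i|^2$ is applied by the paper to the vector field, with $v_i=F_i$; there it is an equality because $\sum_iF_i=0$. You apply it instead to the gradient blocks $v_i=\nabla_if$, as in Lemma~\ref{lem:hardy}. You do this after first proving a $d$-dimensional Hardy inequality with the one-body field $K_\varepsilon$ and the same pointwise bound $(d-2)|K_\varepsilon|^2\le V_\varepsilon$ that the paper uses.

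Your route is more modular. As a by-product it gives the stronger pairwise estimate $(d-2)\int V_\varepsilon(x^i-x^j)|f|^2\le\int|(\nabla_i-\nabla_j)f|^2$, the regularized analogue of \eqref{eq:hardy-pair}. The paper's route is a single completing-the-square computation in $\mathbb R^{dN}$ with no change of variables, and it keeps all cross-pair interaction terms inside the one quantity $|F|^2$.

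Your reduction to real-valued $f$ is harmless: apply the result to the real and imaginary parts separately, or use $|\nabla|f||\le|\nabla f|$ as the paper does.
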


\begin{proof}
Put
\[
 F_i(x):=\sum_{j\ne i}K_\varepsilon(x^i-x^j),
 \qquad F:=(F_1,\ldots,F_N).
\]
Let us first record several elementary identities and inequalities for $F$.
The fact that $K_\varepsilon$ is odd gives $\sum_iF_i=0$ and
$
 \sum_{i=1}^N|F_i|^2
 =\sum_{i<j}(F_i-F_j)\cdot K_\varepsilon(x^i-x^j).
$
Moreover,
\[
 \sum_{i<j}|F_i-F_j|^2
 =N\sum_{i=1}^N|F_i|^2.
\]
Cauchy-Schwarz applied to the preceding two identities yields
\begin{equation}
 \sum_{i=1}^N|F_i|^2
 \leq N\sum_{i<j}|K_\varepsilon(x^i-x^j)|^2
 \leq\frac{N}{d-2}\sum_{i<j}V_\varepsilon(x^i-x^j),
 \label{appup:algebraic-bound}
\end{equation}
where the last inequality follows directly from the definition
\eqref{appup:regularized-potential}.  We also have
\begin{equation}
 \operatorname{div}_xF
 =2\sum_{i<j}V_\varepsilon(x^i-x^j).
 \label{appup:div-F}
\end{equation}
For a real-valued $f\in C_c^\infty(\mathbb R^{dN})$, we have
\begin{align*}
 0&\leq\int_{\mathbb R^{dN}} |\nabla f+\frac{d-2}{N}Ff|^2dx\\
  &=\int_{\mathbb R^{dN}} |\nabla f|^2dx+\left(\frac{d-2}{N}\right)^2\int_{\mathbb R^{dN}} |F|^2f^2dx
    -2\frac{d-2}{N}\sum_{i<j}\int_{\mathbb R^{dN}} V_\varepsilon(x^i-x^j)f^2dx.
\end{align*}
By \eqref{appup:algebraic-bound}, the second term in the last line is
at most $\frac{d-2}{N}\sum_{i<j}\int V_\varepsilon f^2$.  This proves
\eqref{appup:regularized-hardy} for smooth real-valued $f$. A straightforward density argument extends the inequality to all $f \in W^{1,2}$. 

Finally, for a complex-valued $f$, applying the established inequality to $|f|$ and using $|\nabla|f|| \leq |\nabla f|$ in the right-hand side, we obtain the result.  
\end{proof}

Taking $\varepsilon \downarrow 0$ and using Fatou's lemma, we recover
\eqref{eq:hardy-many}, but of course the focus of the above lemma is on the uniformity in $\varepsilon>0$.

We will also need the following modification of Lemma \ref{appup:regularized-hardy-lemma}:

\begin{lemma}
\label{appup:divergence-lemma}
For every $f\in W^{1,2}(\mathbb R^{dN})$,
\begin{equation}
 \int_{\mathbb R^{dN}}
 \left[-\operatorname{div}\left(\frac{\nabla\varphi}{\varphi}\right)\right]
 |f|^2dx
 \leq\frac{2\nu}{d-2}\int_{\mathbb R^{dN}}|\nabla f|^2dx
      +\frac{C}{s}\int_{\mathbb R^{dN}}|f|^2dx.
 \label{appup:divergence-bound}
\end{equation}
\end{lemma}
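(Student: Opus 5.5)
We need to prove Lemma \ref{appup:divergence-lemma}. The plan is to compute $-\operatorname{div}(\nabla\varphi/\varphi)=-\Delta\log\varphi$ explicitly, bound it by a multiple of $\sum_{i<j}V_\varepsilon(x^i-x^j)$ plus a bounded error of size $C/s$, and then apply Lemma \ref{appup:regularized-hardy-lemma}.

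We write $\log\varphi=\log\psi_{s,\varepsilon}+R$, where $R=\frac{\nu}{N}\sum_{i<j}g(s^{-1/2}|x^i-x^j|_\varepsilon)$ is the function from the proof of Lemma \ref{lem:gaussian-transfer}. For the first part,
\[
\log\psi_{s,\varepsilon}=-\frac{\nu}{N}\sum_{i<j}\log|x^i-x^j|_\varepsilon+\mathrm{const}.
\]
Since $\nabla_z\log|z|_\varepsilon=K_\varepsilon(z)$ and $\operatorname{div}K_\varepsilon=V_\varepsilon$, each pair contributes $\Delta_{x^i}+\Delta_{x^j}$, that is, twice $V_\varepsilon$. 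This gives the exact identity
\[
-\Delta\log\psi_{s,\varepsilon}=\frac{2\nu}{N}\sum_{i<j}V_\varepsilon(x^i-x^j).
\]
For the correction, \eqref{eq:R-derivative-bounds} gives $\|\nabla^2R\|_\infty\le C/s$ a.e. Hence $|\Delta R|\le dN\,C/s$ a.e. Because $g'$ is continuous and Lipschitz, $R\in C^{1,1}$, so $\Delta R$ is a genuine $L^\infty$ function with no singular part on $\{r=1\}$ or $\{r=2\}$. Therefore, in the sense of distributions,
\[
-\operatorname{div}\Bigl(\frac{\nabla\varphi}{\varphi}\Bigr)\le\frac{2\nu}{N}\sum_{i<j}V_\varepsilon(x^i-x^j)+\frac Cs.
\]

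Next we multiply by $|f|^2$ and integrate. For $f\in C_c^\infty$ this is legitimate since $\varepsilon>0$ makes everything smooth away from the kinks of $R$, which are harmless as just noted. Lemma \ref{appup:regularized-hardy-lemma} gives
\[
\frac{2\nu}{N}\sum_{i<j}\int V_\varepsilon|f|^2\le\frac{2\nu}{N}\cdot\frac{N}{d-2}\int|\nabla f|^2=\frac{2\nu}{d-2}\int|\nabla f|^2.
\]
This is exactly \eqref{appup:divergence-bound}.

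To pass to general $f\in W^{1,2}$ we argue by density. The function $-\Delta\log\varphi$ is bounded for fixed $\varepsilon>0$, since $V_\varepsilon\le d/\varepsilon$, so both sides are continuous in $W^{1,2}$. For complex $f$ we apply the result to $|f|$.

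The only delicate point is checking that no distributional surface terms from $R$ appear on the spheres $r=1,2$; the $C^{1,1}$ regularity of $g$ handles this. The constant $C$ does not depend on $\varepsilon$ or $s$, since \eqref{eq:R-derivative-bounds} is uniform in both.
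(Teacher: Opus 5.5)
Your proof is correct and follows the paper's argument. Both use the splitting $\log\varphi=\log\psi_{s,\varepsilon}+R$, the exact identity $-\Delta\log\psi_{s,\varepsilon}=\frac{2\nu}{N}\sum_{i<j}V_\varepsilon(x^i-x^j)$, a $C/s$ bound on the correction $R$, and Lemma~\ref{appup:regularized-hardy-lemma}. The only cosmetic difference is that you take the two-sided bound $|\Delta R|\le C/s$ from \eqref{eq:R-derivative-bounds}, whereas the paper proves the one-sided bound \eqref{appup:R-bound} region by region, noting that in the outer region the pair contribution $-\frac{2\nu}{N}V_\varepsilon$ is nonpositive.
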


\begin{proof}
This would be the assertion of Lemma \ref{appup:regularized-hardy-lemma} if we had weight $\psi_{s,\varepsilon}$ instead of $\varphi=\varphi_{s,\varepsilon}$ in the left-hand side (in which case we could take $C=0$). We are going to reduce the proof to the use of Lemma \ref{appup:regularized-hardy-lemma}, but for this we need to compare these two weights. Recall that the weight $\varphi$ is defined in terms of function $\eta$, that is, 
$\log\eta(r)=(\nu/N)a(r)$, see \eqref{eta_def}. If we replace $a(r)$ by $-\log r$, then we obtain instead the definition of weight $\psi$. Set
\[
 g(r):=a(r)+\log r,
 \qquad
 R_s(x):=\frac{\nu}{N}\sum_{i<j}
 g\left(s^{-1/2}|x^i-x^j|_\varepsilon\right).
\]

By \eqref{appup:weights} and the definition of
$\varphi_{s,\varepsilon}$,
\begin{equation}
 \log\varphi=\log\psi_{s,\varepsilon}+R_s.
 \label{appup:log-comparison}
\end{equation}
The function $g$ vanishes on $]0,1[$, has bounded first and second
derivatives on $[1,2]$, and equals
$1-2\log2+\log r$ for $r>2$. 

Let $z \in \mathbb R^d$. We note that on the ``transition region''
$1\leq s^{-1/2}|z|_\varepsilon \leq2$ we have
\begin{equation}
 |\nabla_z (s^{-1/2}|z|_\varepsilon)|\leq s^{-1/2},
 \qquad
 |\Delta_z (s^{-1/2}|z|_\varepsilon)|
 =\frac{(d-1)|z|^2+d\varepsilon}
        {\sqrt{s}(|z|^2+\varepsilon)^{3/2}}
 \leq\frac d{s}.
 \label{appup:rho-derivatives}
\end{equation}

Combining the previous two observations, we obtain
\begin{equation}
 -\Delta R_s\leq\frac{b_0}{s},
 \qquad 0\leq b_0\leq C.
 \label{appup:R-bound}
\end{equation}
Indeed, the left-hand side vanishes in the inner region, is bounded by
$C/s$ in the transition region, and in the outer region it is bounded by $-2(\nu/N)V_\varepsilon(z)\leq0$. 

On the other hand,
\begin{equation}
 -\Delta\log\psi_{s,\varepsilon}
 =\frac{2\nu}{N}\sum_{i<j}V_\varepsilon(x^i-x^j).
 \label{appup:psi-divergence}
\end{equation}
Applying \eqref{appup:log-comparison}, \eqref{appup:R-bound}, and \eqref{appup:psi-divergence} and integrating against $|f|^2$, we obtain using
Lemma \ref{appup:regularized-hardy-lemma}:
\begin{equation*}
 \int_{\mathbb R^{dN}} (-\Delta\log\varphi)|f|^2
 \leq\frac{2\nu}{d-2}\int_{\mathbb R^{dN}} |\nabla f|^2
      +\frac Cs\int_{\mathbb R^{dN}}  |f|^2,
\end{equation*}
which is exactly \eqref{appup:divergence-bound}.
\end{proof}

\subsubsection{Weighted Sobolev inequality}

Define the Sobolev exponent
\[
 \ell:=\frac{dN}{dN-2},
 \qquad
 \|u\|_{p,\varphi}:=\langle|u|^p\rangle_\varphi^{1/p}.
\]

\begin{lemma}[Weighted Sobolev inequality]
\label{appup:weighted-sobolev-lemma}
If $\nu<2(d-2)$, then
\begin{equation}
 \|u\|_{2\ell,\varphi}^2
 \leq C_1\langle|\nabla u|^2\rangle_\varphi
      +\frac{C_2}{s}\|u\|_{2,\varphi}^2,
 \qquad u\in W^{1,2}(\mathbb R^{dN}),
 \label{appup:weighted-sobolev}
\end{equation}
where $C_1,C_2$ are independent of $s$ and $\varepsilon$.
\end{lemma}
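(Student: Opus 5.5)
We would prove the weighted Sobolev inequality by the ground-state substitution $u=\varphi^{-1/2}w$, i.e.\ $w:=\varphi^{1/2}u$. Since $\varphi=\varphi_{s,\varepsilon}\ge c_\varphi>0$, the left-hand side is controlled by an unweighted norm:
\[
\|u\|_{2\ell,\varphi}^2=\langle |w|^{2\ell}\varphi^{1-\ell}\rangle^{1/\ell}\le c_\varphi^{(1-\ell)/\ell}\|w\|_{2\ell}^2 ,
\]
because $1-\ell<0$. The ordinary Sobolev inequality in $\mathbb R^{dN}$ then gives $\|w\|_{2\ell}^2\le C_S\|\nabla w\|_2^2$. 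Hence it suffices to show
\begin{equation*}
\|\nabla(\varphi^{1/2}u)\|_2^2\le C\langle|\nabla u|^2\rangle_\varphi+\frac Cs\langle |u|^2\rangle_\varphi .
\end{equation*}

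To establish this, we argue first for real $u\in C_c^\infty$ and then pass to $W^{1,2}$ by density. The computation in the proof of Lemma~\ref{lem:ground-state}, with $\psi_\varepsilon\Gamma_{1,\xi}$ replaced by $\varphi$, gives
\[
\int|\nabla(\varphi^{1/2}u)|^2=\langle|\nabla u|^2\rangle_\varphi-\tfrac12\langle u^2\Delta\log\varphi\rangle_\varphi-\tfrac14\langle u^2|\nabla\log\varphi|^2\rangle_\varphi .
\]
We drop the last term, which is nonpositive. For the middle term we apply Lemma~\ref{appup:divergence-lemma} to $f=\varphi^{1/2}u$, noting that $-\Delta\log\varphi=-\operatorname{div}(\nabla\varphi/\varphi)$:
\[
-\tfrac12\langle u^2\Delta\log\varphi\rangle_\varphi=\tfrac12\int(-\Delta\log\varphi)f^2\le\frac{\nu}{d-2}\|\nabla f\|_2^2+\frac{C}{2s}\|f\|_2^2 .
\]
Since $\|f\|_2^2=\langle u^2\rangle_\varphi$, this yields
\[
\Bigl(1-\frac{\nu}{d-2}\Bigr)\|\nabla(\varphi^{1/2}u)\|_2^2\le\langle|\nabla u|^2\rangle_\varphi+\frac{C}{2s}\langle u^2\rangle_\varphi .
\]
The hypothesis $\nu<2(d-2)$ does not make the coefficient $1-\nu/(d-2)$ positive, so this direct absorption is the main obstacle.

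To fix the constant, we would keep part of the $|\nabla\log\varphi|^2$ term instead of discarding it. For a parameter $\gamma$ we expand
\[
0\le\int\bigl|\nabla f+\gamma f\,\nabla\log\varphi\bigr|^2
\]
and combine the result with the identity above, writing $\nabla f=\varphi^{1/2}(\nabla u+\tfrac12u\nabla\log\varphi)$. The better route is to repeat the proof of Lemma~\ref{appup:regularized-hardy-lemma} directly with the field $\nabla\log\varphi$. By \eqref{appup:log-comparison} this field equals $-\frac{\nu}{N}F$ up to $\nabla R_s$, and $|\nabla R_s|\le C/\sqrt s$ by \eqref{eq:R-derivative-bounds}. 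The algebraic bound \eqref{appup:algebraic-bound}, $|F|^2\le\frac{N}{d-2}\sum_{i<j}V_\varepsilon$, then shows that
\[
\tfrac14|\nabla\log\varphi|^2\ge\frac{\nu^2}{4N(d-2)}\sum_{i<j}V_\varepsilon\cdot\frac{d-2}{N}\cdots
\]
up to $C/s$ errors. So the retained term cancels part of the $\frac{\nu}{N}\sum V_\varepsilon$ coming from $-\tfrac12\Delta\log\psi$, and the remaining singular part is at most $\frac{\nu}{2(d-2)}$ times the Hardy energy, by Lemma~\ref{appup:regularized-hardy-lemma}.

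Optimizing over $\gamma$ should produce a coefficient of the form $1-\nu/(2(d-2))>0$, which is exactly the condition $\nu<2(d-2)$. The cross terms involving $\nabla R_s$ would be absorbed with Young's inequality into $\frac Cs\langle u^2\rangle_\varphi$. All constants involved ($c_\varphi$, $C_S$, the constants in Lemmas~\ref{appup:regularized-hardy-lemma} and~\ref{appup:divergence-lemma}, and the bound on $\nabla R_s$) are independent of $s$ and $\varepsilon$, which gives $C_1,C_2$ independent of $s,\varepsilon$. Complex $u$ is handled by applying the inequality to $|u|$ and using $|\nabla|u||\le|\nabla u|$.
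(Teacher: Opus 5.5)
Your reduction is fine. Since $\varphi\ge c_\varphi$ and $1-\ell<0$, it suffices to bound $\|\nabla(\varphi^{1/2}u)\|_2^2$. You also correctly see that the ground-state identity with $\frac14\langle u^2|\nabla\log\varphi|^2\rangle_\varphi$ discarded only yields the factor $1-\nu/(d-2)$.

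The gap is in the step meant to repair this. The bound \eqref{appup:algebraic-bound} is an \emph{upper} bound, $|F|^2\le\frac{N}{d-2}\sum_{i<j}V_\varepsilon$, so it cannot give a lower bound on $\frac14|\nabla\log\varphi|^2$. Your display is essentially that inequality with the sign reversed, and it trails off. No pointwise lower bound of the needed strength holds either. With $\nabla\log\varphi\approx-\frac{\nu}{N}F$, halving the singular part of $\frac12(-\Delta\log\psi_{s,\varepsilon})=\frac{\nu}{N}\sum_{i<j}V_\varepsilon$ would require $|F|^2\ge\frac{2N}{\nu}\sum_{i<j}V_\varepsilon$. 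But near a binary collision with the other particles far away, $|F|^2\approx\frac{2}{d-2}\sum_{i<j}V_\varepsilon$, which is smaller because $\nu<2(d-2)\le N(d-2)$. For $\varepsilon>0$, at a total collision $\nabla\log\varphi=0$ while $V_\varepsilon=d/\varepsilon$. So the coefficient $1-\nu/(2(d-2))$ is asserted rather than derived, and the argument you give for it does not work.

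The missing idea is to use the $|\nabla\log\varphi|^2$ term in integrated form, through the weighted energy itself; this is the paper's Cauchy--Schwarz step. Set $w=\varphi^{1/2}u$ and do not expand $|\nabla w|^2$ completely. Instead, pair $\nabla w$ with $\varphi^{1/2}\nabla u=\nabla w-\frac w2\nabla\log\varphi$ and integrate by parts only in the cross term:
\[
\int|\nabla w|^2=\int\varphi^{1/2}\nabla u\cdot\nabla w+\frac14\int(-\Delta\log\varphi)\,w^2 .
\]
Young's inequality on the first term gives $\|\nabla w\|_2^2\le\frac{(1+\delta)^2}{4\delta}\langle|\nabla u|^2\rangle_\varphi+\frac{1+\delta}{4}\int(-\Delta\log\varphi)w^2$. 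The Hardy-type term now carries $\frac{1+\delta}{4}$ instead of $\frac12$, and Lemma~\ref{appup:divergence-lemma} leaves the factor $1-\frac{(1+\delta)\nu}{2(d-2)}>0$ for small $\delta$. No separate handling of $\nabla R_s$ is needed, since that lemma already absorbs $R_s$.

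The paper does exactly this with $v=u\varphi^{1/(2\ell)}$. That gives the slightly better factor $1-\frac{(1+\delta)\nu}{2(d-2)\ell}$, at the cost of using $\varphi^{1/\ell}\le C\varphi$ on the right instead of $\varphi^{1-\ell}\le c_\varphi^{1-\ell}$ on the left.

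Your $\gamma$-expansion is equivalent if you actually carry it out. Take $\gamma=-g$ with $g>\frac12$ and eliminate $\int w^2|\nabla\log\varphi|^2$ using the ground-state identity; this gives
\[
\|\nabla w\|_2^2\le\frac{g}{2g+1}\int(-\Delta\log\varphi)\,w^2+\frac{4g^2}{4g^2-1}\langle|\nabla u|^2\rangle_\varphi .
\]
Letting $g\downarrow\frac12$ approaches, but never attains, the coefficient $\frac14$. So the factor you get is $1-\frac{(1+\delta)\nu}{2(d-2)}$, not exactly $1-\frac{\nu}{2(d-2)}$; this still suffices under $\nu<2(d-2)$.
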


\begin{proof}
It is enough to consider a real $u\in C_c^\infty$. The general case
follows by a density argument and the inequality $|\nabla|u||\leq|\nabla u|$.

Set
\[
 v:=u\varphi^{1/(2\ell)}.
\]
The ordinary Sobolev inequality on $\mathbb R^{dN}$ gives
\begin{equation}
 \|u\|_{2\ell,\varphi}^2
 =\|v\|_{2\ell}^2
 \leq C_S\int_{\mathbb R^{dN}} |\nabla v|^2dx.
 \label{appup:ordinary-sobolev}
\end{equation}
Since $
 \varphi^{1/(2\ell)}\nabla u
 =\nabla v-\frac{v}{2\ell}\nabla\log\varphi$,
we have, using the integration by parts,
\begin{align*}
 \int_{\mathbb R^{dN}}  |\nabla v|^2dx
 &=\int_{\mathbb R^{dN}} \varphi^{1/(2\ell)}\nabla u\cdot\nabla v\,dx
   +\frac1{2\ell}
       \int_{\mathbb R^{dN}} v\nabla v\cdot\nabla\log\varphi\,dx\\
 &=\int_{\mathbb R^{dN}}  \varphi^{1/(2\ell)}\nabla u\cdot\nabla v\,dx
   +\frac1{4\ell}
       \int_{\mathbb R^{dN}} (-\Delta\log\varphi)v^2dx.
\end{align*}
Next, for every $\delta>0$, Young's inequality gives
\[
 \int_{\mathbb R^{dN}}  \varphi^{1/(2\ell)}\nabla u\cdot\nabla v\,dx
 \leq\frac{\delta}{1+\delta}\int_{\mathbb R^{dN}} |\nabla v|^2dx
      +\frac{1+\delta}{4\delta}
       \int_{\mathbb R^{dN}} \varphi^{1/\ell}|\nabla u|^2dx.
\]
Therefore,
\begin{equation}
 \int_{\mathbb R^{dN}}  |\nabla v|^2dx
 \leq\frac{(1+\delta)^2}{4\delta}
       \int_{\mathbb R^{dN}}  \varphi^{1/\ell}|\nabla u|^2dx
      +\frac{1+\delta}{4\ell}
       \int_{\mathbb R^{dN}} (-\Delta\log\varphi)v^2dx.
 \label{appup:v-key}
\end{equation}
The uniform lower bound \eqref{eq:uniform-weight-lower} gives
$\varphi=\varphi_{s,\varepsilon}\geq c_\varphi>0$, so
$\varphi^{1/\ell}\leq C\varphi$. Thus, we can estimate the first term in the right-hand side of \eqref{appup:v-key} as follows:
\[
 \int_{\mathbb R^{dN}}  \varphi^{1/\ell}|\nabla u|^2dx
 \leq C\langle|\nabla u|^2\rangle_\varphi,
 \quad
 \|v\|_2^2\leq C\|u\|_{2,\varphi}^2.
\]
Now, we apply the many-particle Hardy type inequality of Lemma \ref{appup:divergence-lemma} to the second term in the right-hand side of \eqref{appup:v-key}.  We obtain
\[
 \left(1-\frac{(1+\delta)\nu}{2(d-2)\ell}\right)
 \int_{\mathbb R^{dN}} |\nabla v|^2dx
 \leq C_\delta\langle|\nabla u|^2\rangle_\varphi
      +\frac Cs\|u\|_{2,\varphi}^2.
\]
Since $\nu<2(d-2)$ and $\ell>1$, the constant $\delta>0$ can be chosen so that
$(1+\delta)\nu<2(d-2)\ell$. Therefore, the coefficient on the left is
positive.  Together with
\eqref{appup:ordinary-sobolev}, this proves
\eqref{appup:weighted-sobolev}.
\end{proof}

\subsubsection{Proof of the upper bound, completed}

The auxiliary scale $s$ remains fixed, so the weight $\varphi$ is time-independent. At the very end we will take $s=t$.

We run Moser's iteration procedure and use the ``Davies device'', see Fabes-Stroock \cite{FS}.

\begin{lemma}
\label{appup:moser-lemma}
For every bounded real-valued Lipschitz function $\zeta$ on
$\mathbb R^{dN}$ and every $t>0$,
\begin{align}
 &\bigl\|e^\zeta e^{-tA}e^{-\zeta}
       \bigr\|_{L^2_\varphi\rightarrow L^\infty}
 +\bigl\|e^{-\zeta}e^{-tA}e^\zeta
       \bigr\|_{L^2_\varphi\rightarrow L^\infty}
 \notag\\
 &\hspace{25mm}\leq
 Ct^{-dN/4}
 \exp\left\{
       \frac{Ct}{s}
       +C\|\nabla\zeta\|_\infty^2t
      \right\}.
 \label{appup:two-infinity}
\end{align}
The constants are independent of $s$, $\varepsilon$, and $\zeta$.
\end{lemma}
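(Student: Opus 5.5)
The plan is to run the Davies--Fabes--Stroock scheme: twist the semigroup by $e^{\pm\zeta}$, obtain an $L^2_\varphi\to L^{2\ell}_\varphi$ smoothing bound from the weighted Sobolev inequality of Lemma \ref{appup:weighted-sobolev-lemma}, and then iterate in $L^p_\varphi$ (Moser) up to $L^\infty$. Put $P_t:=e^{\zeta}e^{-tA}e^{-\zeta}$ and, for nonnegative $f\in C_c^\infty$, $u(t):=P_tf$, so that $\partial_t u=-e^{\zeta}Ae^{-\zeta}u$. Because the two terms in \eqref{appup:two-infinity} are exchanged by $\zeta\mapsto-\zeta$, it is enough to bound $\|P_t\|_{L^2_\varphi\to L^\infty}$. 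Fix $\varepsilon>0$; then all coefficients are smooth, $u$ is smooth and integrable, and the manipulations below are classical.

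The first step is the basic energy identity. For $p\ge2$, integrate by parts against $u^{p-1}\varphi$ and use $\nabla(e^{-\zeta}u)=e^{-\zeta}(\nabla u-u\nabla\zeta)$:
\[
-\frac1p\frac{d}{dt}\langle u^p\rangle_\varphi=\langle \nabla(e^{-\zeta}u)\cdot\nabla(e^{\zeta}u^{p-1})\rangle_\varphi .
\]
Expand the right-hand side. The cross terms are absorbed by Young's inequality, which gives the standard bound
\[
\frac{d}{dt}\langle u^p\rangle_\varphi\le-\frac{c}{p}\,\langle|\nabla u^{p/2}|^2\rangle_\varphi+Cp^2\|\nabla\zeta\|_\infty^2\langle u^p\rangle_\varphi .
\]
Taking $p=2$ gives $\|P_t\|_{2\to2}\le e^{C\|\nabla\zeta\|_\infty^2t}$.

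The second step inserts Lemma \ref{appup:weighted-sobolev-lemma} with $w=u^{p/2}$:
\[
\langle|\nabla w|^2\rangle_\varphi\ge C_1^{-1}\|w\|_{2\ell,\varphi}^2-\frac{C_2}{C_1s}\|w\|_{2,\varphi}^2 .
\]
This produces a Nash-type differential inequality in which $\|u\|_{p,\varphi}$ is controlled through $\|u\|_{p\ell,\varphi}$, with an additional zero-order term $(C/s+Cp^2\|\nabla\zeta\|_\infty^2)\langle u^p\rangle_\varphi$. That term is removed by multiplying by $e^{-(C/s+Cp^2\|\nabla\zeta\|_\infty^2)t}$. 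With $p_k=2\ell^k$ and the usual time splitting $t_k=t(1-2^{-k})$, the Moser iteration in the form of Fabes--Stroock \cite{FS} then gives
\[
\|u(t)\|_{\infty}\le C t^{-\frac{dN}{4}}\exp\Bigl\{\tfrac{Ct}{s}+C\|\nabla\zeta\|_\infty^2t\Bigr\}\|f\|_{2,\varphi}.
\]
The power $t^{-dN/4}$ comes from $\ell=dN/(dN-2)$, since $\sum_k \ell^{-k}\cdot\frac{1}{\ell-1}\cdot\frac12=dN/4$. To keep the exponent linear in $\|\nabla\zeta\|_\infty^2$ rather than growing with $p_k^2$, I would use the Fabes--Stroock variant: either work with the weighted quantity $\sup_{\tau\le t}\tau^{\gamma_k}\|u(\tau)\|_{p_k}$, or apply the $p_k^2$ penalty only over time intervals of length $t2^{-k}$, so that $\sum_k p_k^2 2^{-k}$ is replaced by a summable series, for example by using $p_k$ growing geometrically together with intervals shrinking like $\ell^{-2k}$. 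Sign issues are handled by applying the argument to $|f|$, using positivity preservation of $e^{-tA}$.

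The main obstacle is exactly this bookkeeping. Each $L^{p_k}\to L^{p_{k+1}}$ step must contribute a factor $\exp\{C(t_k/s)+C\|\nabla\zeta\|^2_\infty t_k\}$ with $\sum t_k\lesssim t$, while the product of the Sobolev constants raised to the powers $1/p_k$ must remain bounded. Because the Sobolev inequality is inhomogeneous (it carries the $C_2/s$ term), I would treat that term as part of the exponential weight $e^{Ct/s}$, so that the homogeneous Euclidean-type iteration applies verbatim. All constants then depend only on $d,N,\nu$ through $C_1,C_2$ and the lower bound $c_\varphi$, and not on $s$, $\varepsilon$ or $\zeta$.
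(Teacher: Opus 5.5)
Your architecture matches the paper's: the $e^{\pm\zeta}$ twist, the $L^p_\varphi$ energy identity closed by Young's inequality, the weighted Sobolev inequality with its $C_2/s$ term moved into the exponential, and an iteration up to $L^\infty$. The gap is the step you yourself call the main obstacle. You never derive a one-step bound, and the accounting you give for it is wrong.

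First, the inequality $\frac{d}{dt}\|u\|_{p,\varphi}^p\le -c\|u\|_{p\ell,\varphi}^p+\lambda_p\|u\|_{p,\varphi}^p$, with $\lambda_p=C(p^2\|\nabla\zeta\|_\infty^2+1/s)$, only controls $\int_0^\delta\|u(\tau)\|_{p\ell,\varphi}^p\,d\tau$. To bound $\|u(\delta)\|_{p\ell,\varphi}$ you must add the growth estimate $\|u(\delta)\|_{q,\varphi}\le e^{Cq\|\nabla\zeta\|_\infty^2(\delta-\tau)}\|u(\tau)\|_{q,\varphi}$ at $q=p\ell$. This is your energy inequality with the gradient term dropped. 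Averaging over $\tau\in[0,\delta]$ then gives $\|P_\delta\|_{L^p_\varphi\to L^{p\ell}_\varphi}\le (Cp/\delta)^{1/p}\exp\{Cp\|\nabla\zeta\|_\infty^2\delta+C\delta/s\}$.

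Second, after taking the $p$-th root the penalty is linear in $p$, not quadratic. So what must be checked is $\sum_k p_k\delta_k\le Ct$. A $p$-independent cost per step, which your last paragraph asks for, is neither what the energy method gives nor needed. With $p_k=2\ell^k$ and $\delta_k=t2^{-k-1}$ one has $\sum_k p_k\delta_k=2t/(2-\ell)\le 4t$, because $\ell=dN/(dN-2)\le 3/2$. The time factors multiply to $t^{-\sum_k 1/p_k}\prod_k (C p_k 2^{k+1})^{1/p_k}$, with $\sum_{k\ge0}1/(2\ell^k)=\ell/(2(\ell-1))=dN/4$; your displayed formula for this sum is not correct. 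So the ordinary splitting already closes the argument. The remedies you propose are built on the $p_k^2$ count and would fail under it: $\sum_k p_k^2 2^{-k}$ diverges when $dN=6$, and intervals proportional to $\ell^{-2k}$ make $\sum_k p_k^2\delta_k$ diverge.

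For comparison, the paper avoids the almost-monotonicity step. It combines Sobolev with the interpolation $\|w\|_{2,\varphi}\le\|w\|_{1,\varphi}^{2/(dN+2)}\|w\|_{2\ell,\varphi}^{dN/(dN+2)}$ for $w=u^{p/2}$. This turns the dissipation inequality into a closed differential inequality for $\|u\|_{p,\varphi}^{-2p/dN}$, driven by $\|u\|_{p/2,\varphi}$, and yields $\|P_\delta\|_{L^{p/2}_\varphi\to L^{p}_\varphi}\le C^{1/p}\delta^{-dN/(2p)}\exp\{Cp(\|\nabla\zeta\|_\infty^2+1/s)\delta\}$. It then doubles, $p_j=2^{j+2}$, on intervals of length $6t/p_j^2$. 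The penalties therefore add up to $Ct(\|\nabla\zeta\|_\infty^2+1/s)$ and the time exponents to $dN/4$. Your $p\mapsto p\ell$ ladder, using Sobolev alone plus the growth bound, is a legitimate alternative once the two points above are supplied.
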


\begin{proof}
The positivity preservation of $e^{-tA}$ gives
$
 \left|e^\zeta e^{-tA}e^{-\zeta}f\right|
 \leq e^\zeta e^{-tA}e^{-\zeta}|f|,
$
so it is enough to consider $f\geq0$.  Set
\[
 u(r):=e^\zeta e^{-rA}e^{-\zeta}f.
\]
For $p\geq2$, multiplying the equation for $u$ by $u^{p-1}$ and integrating,
gives
\begin{align*}
 -\frac1p\frac{d}{dr}\|u(r)\|_{p,\varphi}^p
 &=\left\langle
     \nabla(e^{-\zeta}u),
     \nabla(e^\zeta u^{p-1})
   \right\rangle_\varphi                                      \\
 &=\frac{4(p-1)}{p^2}
      \left\langle\left|\nabla u^{p/2}\right|^2
      \right\rangle_\varphi
   -\frac{2(p-2)}p
      \left\langle
        u^{p/2}\nabla u^{p/2}\cdot\nabla\zeta
      \right\rangle_\varphi                                   \\
 &\quad
   -\left\langle|\nabla\zeta|^2u^p\right\rangle_\varphi .
\end{align*}
Young's inequality therefore yields
\[
 -\frac{d}{dr}\|u(r)\|_{p,\varphi}^p
 \geq
 c\left\langle
       \left|\nabla u(r)^{p/2}\right|^2
   \right\rangle_\varphi
 -Cp^2\|\nabla\zeta\|_\infty^2
  \|u(r)\|_{p,\varphi}^p.
\]
Applying the weighted Sobolev inequality
\eqref{appup:weighted-sobolev} gives
\begin{equation}
 -\frac{d}{dr}\|u(r)\|_{p,\varphi}^p
 \geq
 c\|u(r)^{p/2}\|_{2\ell,\varphi}^2
 -Cp^2\left(
        \|\nabla\zeta\|_\infty^2+\frac1s
       \right)
  \|u(r)\|_{p,\varphi}^p.
 \label{appup:p-sobolev}
\end{equation}
Moreover,
\[
 \|u^{p/2}\|_{2,\varphi}
 \leq
 \|u^{p/2}\|_{1,\varphi}^{\,2/(dN+2)}
 \|u^{p/2}\|_{2\ell,\varphi}^{\,dN/(dN+2)}.
\]
Consequently,
\[
 \|u^{p/2}\|_{2\ell,\varphi}^2
 \geq
 \|u\|_{p,\varphi}^{p(1+2/dN)}
 \|u\|_{p/2,\varphi}^{-2p/dN},
\]
and \eqref{appup:p-sobolev} implies
\begin{align}
 -\frac{d}{dr}\|u(r)\|_{p,\varphi}^p
 &\geq
 c\|u(r)\|_{p,\varphi}^{p(1+2/dN)}
  \|u(r)\|_{p/2,\varphi}^{-2p/dN}
 \notag\\
 &\quad
 -Cp^2\left(
        \|\nabla\zeta\|_\infty^2+\frac1s
       \right)
  \|u(r)\|_{p,\varphi}^p.
 \label{appup:nonlinear}
\end{align}

For $p\geq4$, apply \eqref{appup:p-sobolev} with $p$ replaced
by $p/2$ and discard its positive term.  Gronwall's inequality
then gives
\[
 \|u(r)\|_{p/2,\varphi}
 \leq
 \exp\left\{
 Cp\left(\|\nabla\zeta\|_\infty^2+\frac1s\right)r
 \right\}
 \|f\|_{p/2,\varphi}.
\]
On the other hand, \eqref{appup:nonlinear} gives
\begin{align*}
 \frac{d}{dr}\|u(r)\|_{p,\varphi}^{-2p/dN}
 &+
 Cp^2\left(
       \|\nabla\zeta\|_\infty^2+\frac1s
      \right)
 \|u(r)\|_{p,\varphi}^{-2p/dN}                                 \\
 &\geq c\|u(r)\|_{p/2,\varphi}^{-2p/dN}.
\end{align*}
Using the previous growth estimate on the right-hand side,
multiplying by an integrating factor, and integrating from $0$ to
$t$, we obtain
\[
 \|u(t)\|_{p,\varphi}^{-2p/dN}
 \geq
 ct\exp\left\{
 -Cp^2\left(\|\nabla\zeta\|_\infty^2+\frac1s\right)t
 \right\}
 \|f\|_{p/2,\varphi}^{-2p/dN}.
\]
It follows that
\begin{equation}
 \bigl\|e^\zeta e^{-tA}e^{-\zeta}
       \bigr\|_{L^{p/2}_\varphi\rightarrow L^p_\varphi}
 \leq
 C^{1/p}t^{-dN/(2p)}
 \exp\left\{
 Cp\left(\|\nabla\zeta\|_\infty^2+\frac1s\right)t
 \right\},
 \qquad p\geq4.
 \label{appup:one-step}
\end{equation}
This estimate is first obtained for
$f\in L^{p/2}_\varphi\cap L^p_\varphi$ and then extended to
$L^{p/2}_\varphi$ by approximation.

To iterate \eqref{appup:one-step}, let
\[
 p_j:=2^{j+2},\qquad j\geq0.
\]
Use the $L^2_\varphi$ growth estimate during the first half of the
time interval and then apply \eqref{appup:one-step} successively
with exponent $p_j$ during intervals of length
\[
 \frac{6t}{p_j^2},\qquad j=0,1,\ldots
\]
Indeed,
\[
 \sum_{j=0}^\infty\frac{6t}{p_j^2}=\frac t2,
 \qquad
 \sum_{j=0}^\infty\frac1{p_j}=\frac12,
 \qquad
 \sum_{j=0}^\infty\frac{6t}{p_j}=3t.
\]
The product of the time factors satisfies
\[
 \prod_{j=0}^\infty
 \left(\frac{6t}{p_j^2}\right)^{-dN/(2p_j)}
 =
 t^{-dN/4}
 \prod_{j=0}^\infty
 \left(\frac{p_j^2}{6}\right)^{dN/(2p_j)}
 \leq Ct^{-dN/4},
\]
because $\sum_jp_j^{-1}\log p_j<\infty$.  The remaining constants
and exponential factors also form convergent products.  We conclude
that
\[
 \bigl\|e^\zeta e^{-tA}e^{-\zeta}
       \bigr\|_{L^2_\varphi\rightarrow L^\infty}
 \leq
 Ct^{-dN/4}
 \exp\left\{
 Ct\left(\|\nabla\zeta\|_\infty^2+\frac1s\right)
 \right\}.
\]
Since $e^{-tA}$ is self-adjoint, the adjoint of
$e^\zeta e^{-tA}e^{-\zeta}$ is
$e^{-\zeta}e^{-tA}e^\zeta$.  Replacing $\zeta$ by $-\zeta$
therefore proves the second estimate in
\eqref{appup:two-infinity}.
\end{proof}

Using the duality and the semigroup property, we obtain from Lemma
\ref{appup:moser-lemma}:
\begin{align*}
 \bigl\|e^\zeta e^{-tA}e^{-\zeta}
       \bigr\|_{L^1_\varphi\rightarrow L^\infty}
 &\leq
 \bigl\|e^\zeta e^{-(t/2)A}e^{-\zeta}
       \bigr\|_{L^2_\varphi\rightarrow L^\infty}
 \bigl\|e^\zeta e^{-(t/2)A}e^{-\zeta}
       \bigr\|_{L^1_\varphi\rightarrow L^2_\varphi}                    \\
 &=
 \bigl\|e^\zeta e^{-(t/2)A}e^{-\zeta}
       \bigr\|_{L^2_\varphi\rightarrow L^\infty}
 \bigl\|e^{-\zeta}e^{-(t/2)A}e^\zeta
       \bigr\|_{L^2_\varphi\rightarrow L^\infty}                       \\
 &\leq
 Ct^{-dN/2}
 \exp\left\{
       \frac{Ct}{s}
       +C\|\nabla\zeta\|_\infty^2t
      \right\}.
\end{align*}
The positive operator $e^\zeta e^{-tA}e^{-\zeta}$ has, with respect
to $\varphi(y)\,dy$, the kernel
\[
 e^{\zeta(x)-\zeta(y)}q_{s,\varepsilon}(t,x,y).
\]
It follows that
\begin{equation}
 q_{s,\varepsilon}(t,x,y)
 \leq
 Ct^{-dN/2}
 \exp\left\{
       \frac{Ct}{s}
       +C\|\nabla\zeta\|_\infty^2t
       +\zeta(y)-\zeta(x)
      \right\}.
 \label{appup:bounded-weight-kernel}
\end{equation}
We now apply this bound with
$$
\zeta(z):=\max\{-R,\min\{R,\beta\cdot z\}\}, 
$$
where $\beta\in\mathbb R^{dN}$ and $R \geq 1$.
We have
$\|\nabla\zeta\|_\infty\leq|\beta|$, so taking $R \rightarrow \infty$, yields
\[
 q_{s,\varepsilon}(t,x,y)
 \leq
 Ct^{-dN/2}
 \exp\left\{
       \frac{Ct}{s}
       +C|\beta|^2t
       +\beta\cdot(y-x)
      \right\}.
\]
Minimizing the quadratic expression in $\beta$, i.e.\,taking $\beta=\frac{x-y}{2Ct}$ -- this is the final step in Davies' argument --  yields
\begin{equation}
 q_{s,\varepsilon}(t,x,y)
 \leq
 Ct^{-dN/2}
 \exp\left(
       \frac{Ct}{s}
       -\frac{|x-y|^2}{Ct}
      \right),
 \qquad t>0.
 \label{appup:q-upper-A}
\end{equation}
This is the required upper estimate for the same symmetric kernel
$q$ used in the lower-bound proof.

Finally, choose $s=t$. Then
\[
 q_{t,\varepsilon}(t,x,y)
 \leq
 Ct^{-dN/2}
 \exp\left(-\frac{|x-y|^2}{Ct}\right).
\]
Lemma
\ref{lem:gaussian-transfer}(a) with $M_0=1$ therefore yields,
after changing the Gaussian constants,
\[
 p_\varepsilon(t,x,y)
 \leq
 Ct^{-dN/2}
 \exp\left(-\frac{|x-y|^2}{Ct}\right)
 \varphi_{t,\varepsilon}(y).
\]
\hfill \qed

\bigskip

\section{Proof of Corollary \ref{cor1}}
\label{sec:proof-cor1}

Fix $s,\varepsilon>0$ and keep the notation
$\varphi=\varphi_{s,\varepsilon}$,
$\psi=\psi_{s,\varepsilon}$, and
$h_\varepsilon=b_\varepsilon-\bar b_\varepsilon$ from the proof of
Theorem \ref{thm1}.  Thus
$h_\varepsilon=\nabla\log(\varphi/\psi)$, and \eqref{bb_est} says that
\begin{equation}
 \|h_\varepsilon\|_\infty\leq\frac C{\sqrt s}.
 \label{eq:cor-h-bound}
\end{equation}
Constants denoted by $c,C$ below may additionally depend on
$\|C\|_\infty$.  They do not depend on $s,\varepsilon$, or derivatives
of $C$.

We use $C$ and $-C$ in parallel.  The only new notation needed in the
proof is
\begin{equation}
 k_{s,\varepsilon}^{\pm C}(t,x,y)
 :=\frac{p_\varepsilon^{\pm C}(t,x,y)}{\varphi(y)}.
 \label{eq:cor-kernel}
\end{equation}
For fixed $\varepsilon$, $b_\varepsilon$ is bounded, $I\pm C$ is
bounded, and its symmetric part is $I$.  The standard existence and
uniqueness theorem for uniformly parabolic
divergence-form equations yields conservative fundamental
solutions, i.e.
\begin{equation}
 \left\langle k_{s,\varepsilon}^{\pm C}(t,x,\cdot)
 \right\rangle_\varphi=1.
 \label{eq:cor-mass}
\end{equation}
We will also use the identity $
 \Lambda_\varepsilon^{\pm C}
 =-\psi^{-1}\nabla\cdot\bigl(\psi(I\pm C)\nabla\bigr)$.
For $f,g\in C_c^\infty(\mathbb R^{dN})$, integration by parts gives
\[
 \int_{\mathbb R^{dN}}(\Lambda_\varepsilon^C f)g\,\psi\,dy
 =\int_{\mathbb R^{dN}}f(\Lambda_\varepsilon^{-C}g)\,\psi\,dy.
\]
By uniqueness, the same  holds for the
semigroups, so
\begin{equation}
 \frac{p_\varepsilon^C(t,x,y)}{\psi(y)}
 =\frac{p_\varepsilon^{-C}(t,y,x)}{\psi(x)}.
 \label{eq:cor-duality}
\end{equation}

We next obtain the three estimates from the proof of Theorem
\ref{thm1} that will be used below.  In $L^2_\varphi$, integration by
parts gives, for either sign,
\begin{equation}
 \left\langle\Lambda_\varepsilon^{\pm C}f,g\right\rangle_\varphi
 =\left\langle(I\pm C)\nabla f,\nabla g\right\rangle_\varphi
  +\left\langle
    \bigl[h_\varepsilon\cdot(I\pm C)\nabla f\bigr]g
   \right\rangle_\varphi.
 \label{eq:cor-form}
\end{equation}
The Moser iteration argument in Appendix \ref{upper_bound_app}
applies to this form and its adjoint.  Indeed, skew-symmetry cancels
the term $(C\nabla u)\cdot\nabla u$; the remaining terms containing
$C$ also contain $\nabla\zeta$ and are estimated by Young's
inequality.  The terms containing $h_\varepsilon$ are treated in the
same way using \eqref{eq:cor-h-bound}.  Thus, for both the semigroup
and its adjoint, the calculation leading to
\eqref{appup:p-sobolev} becomes
\[
 -\frac d{dr}\|u(r)\|_{p,\varphi}^p
 \geq c\left\langle|\nabla u(r)^{p/2}|^2\right\rangle_\varphi
 -Cp^2\left(\|\nabla\zeta\|_\infty^2+\frac1s\right)
       \|u(r)\|_{p,\varphi}^p.
\]
For the adjoint equation this is the same calculation with $u$ and
$u^{p-1}$ interchanged in the right-hand side of
\eqref{eq:cor-form}; in particular, no derivative of $C$ appears.
The weighted Sobolev inequality, the iteration in Lemma
\ref{appup:moser-lemma}, and the final minimization in $\zeta$ are
unchanged.  Hence
\begin{equation}
 k_{s,\varepsilon}^{\pm C}(t,x,y)
 \leq Ct^{-dN/2}
 \exp\left(\frac{Ct}{s}-\frac{|x-y|^2}{Ct}\right).
 \label{eq:cor-upper}
\end{equation}

We now repeat the Nash entropy calculation.  Fix either sign and put
$u(t,\cdot)=k_{s,\varepsilon}^{\pm C}(t,x,\cdot)$ -- this solves the adjoint equation in $L^2_\varphi$. Hence the first
derivative below is the right-hand side of \eqref{eq:cor-form} with
$f=1+\log u$ and $g=u$.  Using also \eqref{eq:cor-mass} and
$C^\top=-C$, we obtain
\begin{align}
 \frac d{dt}\bigl[-\langle u\log u\rangle_\varphi\bigr]
 &=\left\langle\frac{|\nabla u|^2}{u}\right\rangle_\varphi
   +\left\langle h_\varepsilon\cdot(I\pm C)\nabla u
     \right\rangle_\varphi \notag\\
 &\geq\frac12
   \left\langle\frac{|\nabla u|^2}{u}\right\rangle_\varphi
   -\frac Cs,                                                \label{eq:cor-entropy-derivative}\\
 \left|\frac d{dt}\langle|x-\cdot|u\rangle_\varphi\right|
 &\leq C
   \left\langle\frac{|\nabla u|^2}{u}\right\rangle_\varphi^{1/2}
   +\frac C{\sqrt s}.                                       \label{eq:cor-moment-derivative}
\end{align}
It follows from
\eqref{eq:cor-entropy-derivative} that, for a sufficiently large
constant $K$,
\[
 \frac d{dt}\left[-\langle u\log u\rangle_\varphi+K\frac ts\right]
 \geq c\left(
   \left\langle\frac{|\nabla u|^2}{u}\right\rangle_\varphi
   +\frac1s\right).
\]
Therefore, the right-hand side of
\eqref{eq:cor-moment-derivative} is bounded by a constant times the
square root of this derivative.  Moreover,
\eqref{eq:cor-upper} and \eqref{eq:cor-mass} give the lower entropy
bound used in Lemma \ref{prop-entropy}.  The proof of that lemma (from
\eqref{Mt} through \eqref{m_est} followed by Lemma
\ref{lem:exp-convolution}) now applies without any other change.  The
added multiple of $t/s$ is bounded on the interval under
consideration.  We have therefore proved the Nash entropy estimate
\begin{equation}
 -C\leq-\langle u(t,\cdot)\log u(t,\cdot)\rangle_\varphi
          -\tilde Q(t)\leq C,
 \qquad \frac{s}{4\tau}\leq t\leq\frac{s}{\tau},
 \label{eq:cor-entropy}
\end{equation}
for every fixed $\tau\geq1$, uniformly for the two signs.

It remains to check the modification of the $G$-function argument.
Put $u(t,\cdot)=k_{s,\varepsilon}^{\pm C}(t,z,\cdot)$.  This again satisfies the adjoint equation.
For $\varepsilon_1>0$, its derivative is the negative of the
right-hand side of \eqref{eq:cor-form} with
$f=\Gamma_{s,\xi}/(\varepsilon_1+u)$ and $g=u$.  Thus
\begin{align}
 \frac d{dt}\left\langle\Gamma_{s,\xi}
       \log(\varepsilon_1+u)\right\rangle_\varphi
 &=\left\langle\Gamma_{s,\xi}
       |\nabla\log(\varepsilon_1+u)|^2\right\rangle_\varphi \notag\\
 &\quad-\left\langle (I\pm C)\nabla\Gamma_{s,\xi},
       \nabla\log(\varepsilon_1+u)\right\rangle_\varphi \notag\\
 &\quad-\left\langle\frac{u}{\varepsilon_1+u}
       h_\varepsilon\cdot(I\pm C)\nabla\Gamma_{s,\xi}
       \right\rangle_\varphi \notag\\
 &\quad+\left\langle\Gamma_{s,\xi}\frac{u}{\varepsilon_1+u}
       h_\varepsilon\cdot(I\pm C)
       \nabla\log(\varepsilon_1+u)\right\rangle_\varphi.
 \label{eq:cor-G-derivative}
\end{align}
Since $0\leq u/(\varepsilon_1+u)\leq1$, Young's inequality,
\eqref{eq:cor-h-bound}, and Lemma \ref{lem:mass2} imply
\begin{equation}
 \frac d{dt}\left\langle\Gamma_{s,\xi}
       \log(\varepsilon_1+u)\right\rangle_\varphi
 \geq\frac12\left\langle\Gamma_{s,\xi}
       |\nabla\log(\varepsilon_1+u)|^2\right\rangle_\varphi
       -\frac Cs.
 \label{eq:cor-G-energy}
\end{equation}
Choose $\tau\geq2$ sufficiently large.  Since
$m_{s,\varepsilon}(\xi)\geq c_\varphi$ and $t\leq s/\tau$, the last
term is absorbed by
$m_{s,\varepsilon}(\xi)\tilde Q'(t)$.  The translated spectral gap inequality
\eqref{eq:translated-gap} then gives the counterpart of
\eqref{G-deriv}.  After increasing $\tau$ if necessary, estimate
\eqref{eq:cor-upper} gives the counterpart of \eqref{estim-varphi},
with $k_{s,\varepsilon}^{\pm C}$ in place of $q$, and
\eqref{eq:cor-entropy} gives the entropy input in
\eqref{eq:Phi-ode}.  Thus the ordinary differential inequality argument in the proof of Lemma
\ref{N-G-bound}, including the limit $\varepsilon_1\downarrow0$,
gives
\begin{equation}
 \frac{\left\langle\Gamma_{s,\xi}
       \log k_{s,\varepsilon}^{\pm C}(t,z,\cdot)
       \right\rangle_\varphi}{m_{s,\varepsilon}(\xi)}
 \geq-\tilde Q(t)-C,
 \label{eq:cor-G-bound}
\end{equation}
whenever $s/(2\tau)\leq t\leq s/\tau$ and
$z\in B_{\sqrt t}(\xi)$. 

We finish as in Step~1 of the proof of Theorem \ref{thm1}, replacing
symmetry by \eqref{eq:cor-duality}.  Let $r>0$, set $s=\tau r$, assume
$|u-v|\leq\sqrt r$, and put $\xi=(u+v)/2$.  The reproduction formula
and \eqref{eq:cor-duality} give
\begin{align*}
 k_{s,\varepsilon}^{C}(r,u,v)
 &=\left\langle
    k_{s,\varepsilon}^{C}(r/2,u,\cdot)
    k_{s,\varepsilon}^{-C}(r/2,v,\cdot)
    \exp\left\{
      \log\frac{\varphi(\cdot)}{\psi(\cdot)}
      -\log\frac{\varphi(v)}{\psi(v)}
    \right\}
   \right\rangle_\varphi.
\end{align*}
Insert
$e^{-|\cdot-\xi|^2/(4s)}=(4\pi s)^{dN/2}\Gamma_{s,\xi}\leq1$
and apply Jensen's inequality with the probability measure
$m_{s,\varepsilon}(\xi)^{-1}\Gamma_{s,\xi}\varphi\,dy$.  It follows
that
\begin{align}
 \log k_{s,\varepsilon}^{C}(r,u,v)
 &\geq\frac{dN}{2}\log(4\pi s)+\log m_{s,\varepsilon}(\xi) \notag\\
 &\quad+
 \frac{\left\langle\Gamma_{s,\xi}
       \log k_{s,\varepsilon}^{C}(r/2,u,\cdot)
       \right\rangle_\varphi}{m_{s,\varepsilon}(\xi)} \notag\\
 &\quad+
 \frac{\left\langle\Gamma_{s,\xi}
       \log k_{s,\varepsilon}^{-C}(r/2,v,\cdot)
       \right\rangle_\varphi}{m_{s,\varepsilon}(\xi)} \notag\\
 &\quad+
 \frac{\left\langle\Gamma_{s,\xi}
       \left(\log\frac{\varphi(\cdot)}{\psi(\cdot)}
             -\log\frac{\varphi(v)}{\psi(v)}\right)
       \right\rangle_\varphi}{m_{s,\varepsilon}(\xi)}.
 \label{eq:cor-Jensen}
\end{align}
By \eqref{bb_est} and Lemma \ref{lem:mass2}, the absolute value of the
last quotient is at most
\[
 \|h_\varepsilon\|_\infty
 \left(
 |v-\xi|+
 \frac{\langle\Gamma_{s,\xi}|\cdot-\xi|\rangle_\varphi}
      {m_{s,\varepsilon}(\xi)}
 \right)\leq C.
\]
Here $|v-\xi|\leq\sqrt r/2$ and $s=\tau r$.  At the same time,
$r/2=s/(2\tau)$ and
$u,v\in B_{\sqrt{r/2}}(\xi)$, so \eqref{eq:cor-G-bound} applies to
the other two quotients.  Equations \eqref{eq:cor-Jensen} and
\eqref{eq:mass} therefore yield
\begin{equation}
 p_\varepsilon^C(r,u,v)
 \geq cr^{-dN/2}\varphi_{\tau r,\varepsilon}(v),
 \qquad |u-v|\leq\sqrt r.
 \label{eq:cor-local}
\end{equation}

This is the near diagonal estimate \eqref{eq:p-local} used in
Step~2 of the proof of Theorem \ref{thm1}.  That step uses only the
semigroup property, \eqref{eq:uniform-weight-lower}, and the
monotonicity of $\eta$, and hence can be repeated for
$p_\varepsilon^C$.  We obtain
\[
 p_\varepsilon^C(t,x,y)
 \geq ct^{-dN/2}
       \exp\left(-C\frac{|x-y|^2}{t}\right)
       \varphi_{t,\varepsilon}(y),
 \qquad 0<t\leq T,
\]
as claimed. \hfill \qed

\bigskip

\makeatletter
\let\savedtocwrite\@tocwrite
\let\@tocwrite\@gobbletwo
\section*{AI usage disclosure}
\let\@tocwrite\savedtocwrite
\makeatother

The authors used OpenAI ChatGPT for mathematical discussions with the goal of improving the constants in the estimates in their first version of the proofs, in particular, concerning the regularization of the particle system in dimension $d=3$. We also used ChatGPT for editorial purposes, i.e.\,fixing typos, improving language, carrying out the final audit of our proofs and also carrying out deep analysis of the existing literature that had found a number of important references that we were not aware of when the first version of this paper was completed.

\bigskip

\raggedbottom

\end{document}